%% LyX 2.1.2.2 created this file.  For more info, see http://www.lyx.org/.
%% Do not edit unless you really know what you are doing.
\documentclass[12pt,oneside,english]{amsart}
\usepackage{mathptmx}

\usepackage[T1]{fontenc}
\usepackage[latin9]{inputenc}
\usepackage[a4paper]{geometry}
\geometry{verbose,tmargin=3cm,bmargin=3cm,lmargin=3cm,rmargin=3cm}
\usepackage{amsthm}
\usepackage{amstext}
\usepackage{amssymb}
\usepackage{esint}

\makeatletter
%%%%%%%%%%%%%%%%%%%%%%%%%%%%%% Textclass specific LaTeX commands.
\numberwithin{equation}{section}
\numberwithin{figure}{section}
\theoremstyle{plain}
\newtheorem{thm}{\protect\theoremname}[section]
  \theoremstyle{plain}
  
  \theoremstyle{plain}
  \newtheorem{prop}[thm]{\protect\propositionname}
  \theoremstyle{remark}
  \newtheorem{rem}[thm]{\protect\remarkname}
   \theoremstyle{lem}
  \newtheorem{lem}[thm]{\protect\lemmaname}
  \theoremstyle{definition}
  \newtheorem{definition}[thm]{\protect\defname}
\makeatother

\usepackage{babel}
  \providecommand{\corollaryname}{Corollary}
  \providecommand{\propositionname}{Proposition}
  \providecommand{\remarkname}{Remark}
\providecommand{\theoremname}{Theorem}
\providecommand{\lemmaname}{Lemma}
\providecommand{\defname}{Definition}
\begin{document}

\title{Mabuchi metrics and properness of the modified Ding functional}

%\date{-}

\author{Yan Li\ \ \ \ \ \ Bin  $\text{Zhou}^*$}

\subjclass[2000]{Primary: 53C25; Secondary:  53C55, 35J35
 }
%58J05
\keywords {Mabuchi metrics, Ding functional, Fano manifolds, Lie group.}
\address{School of Mathematical Sciences, Peking
University, Beijing 100871, China.}

\email{liyanmath@pku.edu.cn\ \ \ bzhou@pku.edu.cn}

\thanks {*Partially supported by NSFC 11571018 and 11331001}

\begin{abstract}
In this paper, we study Mabuchi metrics on Fano manifolds.
We prove that Mabuchi metrics exist if the modified Ding functional is proper modulo a reductive subgroup of its automorphism group. On the other hand, the inverse that Mabuchi metrics implies the properness is obtained by using Darvas-Rubinstein's properness principle.  As an application,
we establish  a criterion for the existence of Mabuchi metrics on  Fano group compactifications.
\end{abstract}

\maketitle

\section{Introduction}

The existence of canonical metrics has been a fundamental
and longstanding problem in K\"ahler geometry. On Fano manifolds, K\"ahler-Einstein metrics have been studied extensively. The most remarkable progress is the resolution of Yau-Tian-Donaldson conjecture which relates the existence of K\"ahler-Einstein metrics to the K-stability of the Fano manifold
\cite{T4, CDS}. It has been known early in 1980's that the existence of K\"ahler-Einsten metrics fails when the Fano manifold has nonvanishing Futaki invariant.
In this case, other canonical metrics, such as extremal metrics and K\"ahler-Ricci solitons have attracted many attentions.

In \cite{Mab1, Mab3, Mab4, Mab2},
Mabuchi studied a generalized K\"ahler-Einstein metric, which is neither an extremal metric nor a K\"ahler-Ricci soliton.  Following \cite{Yao}, we call this metric the {\it Mabuchi metric} for simplicity. Let $M$ be a compact Fano manifold of complex dimension $n$. Let
$$\omega=\sqrt{-1}g_{i\bar j}dz^i\wedge d\bar z^j\in2\pi c_1(M)$$ be a K\"ahler metric and $h_\omega$ be its Ricci potential.  $\omega$ is  a  Mabuchi metric if
\begin{equation}\label{mab-def}
X_\omega:=-\sqrt{-1}g^{i\bar j}{\frac{\partial e^{h_\omega}}{\partial \bar z^j}}{\frac{\partial}{\partial  z^i}}
\end{equation}
is holomorphic \cite{Mab1}.  The uniqueness of Mabuchi metrics has been proved in \cite{Mab2}.
Recently, Donaldson introduced a new GIT (geometric invariant theory) picture \cite{D}, in which  the corresponding moment map is given by the Ricci potential.
Then Yao observed that in this picture
$X_\omega$ is holomorphic if and only if $\omega$ is a critical point of the norm square of the moment map,  given by the following energy  \cite{Yao}
\begin{equation}
\mathcal E^D(\omega)=\int_M (e^{h_\omega}-1)^2\omega^n.
\end{equation}
This brings new interests in the study of Mabuchi metrics.
On toric Fano manifolds, the notion of relative
Ding stability has been introduced by Yao \cite{Yao}. He has also established the existence of Mabuchi metrics when the toric Fano manifold is relatively Ding stable.
The purpose of this paper is to discuss the existence of Mabuchi metrics on general Fano manifolds
through properness of energy functionals.

According to \cite{Mab1}, if $\omega$ is a Mabuchi metric, then \eqref{mab-def} coincides with the
extremal vector field \cite{Mabuchi-Futaki}. To state the main results, we first recall notions on extremal
vector field.
%In \cite{Mab1}, Mabuchi derived the equation describing this critical metric, let's recall his results below.
 Denote by $Aut_0(M)$ the identity component of its holomorphic transformation group.
Its Lie algebra $\eta(M)$ consists of all holomorphic vector fields on $M$.
 $Aut_0(M)$ admits a semi-direct decomposition $$Aut_0(M)=Aut_r(M)\propto R_u,$$
where $Aut_r(M)\subset Aut_0(M)$ is a reductive group and $R_u$ is the unipotent radical of $Aut_0(M)$. %In particular, the Lie subalgebra $\eta_r(M)$ of $Aut_r(M)$ is reductive.
Denote by $\eta_r(M)$ the reductive part of $\eta(M)$.
For any $v\in\eta(M)$, let $K_v$ be the one parameter group generated by the image part Im$(v)$. For a K\"ahler metric $\omega_0\in2\pi c_1(M)$, by Hodge theorem,
there is a unique normalized potential given by
\begin{eqnarray}\label{0201+}
i_v\omega_0=\sqrt{-1}\bar\partial \theta_v(\omega_0),~\int_{M}\theta_v(\omega_0)\omega_0^n=0.
\end{eqnarray}
Then $\theta_v(\omega)$ is real valued if and only if $\omega$ is $K_v$-invariant. For any
$$\phi\in\mathcal H_{v}(\omega_0):=\{\phi\in C^{\infty}(M)|\omega_{\phi}:=\omega_0+\sqrt{-1}\partial\bar\partial\phi>0,\ \phi\text{ is $K_v$-invariant}\},$$
the normalized potential
$\theta_v(\omega_{\phi})=\theta_v(\omega_0)+v(\phi)$.
Denote by $Fut(v)$ the Futaki invariant of $v\in \eta(M)$.
The extremal vector field $X$ is the holomorphic vector field uniquely determined by \cite{Mabuchi-Futaki}
\begin{eqnarray}\label{0201}
Fut_X(v):=Fut(v)+\int_M\theta_v(\omega_0)\theta_X(\omega_0)\omega_0^n=0,~\forall v\in\eta(M).
\end{eqnarray}
Moreover, $X\in\eta_c(M)$, the centre of $\eta_r(M)$ and $K_X$ lies in a compact Lie group.

From now on, we assume that $\omega_0$ is $K_X$-invariant unless otherwise claimed. As pointed by Mabuchi \cite{Mab2}, both $\displaystyle\min_M\theta_X(\omega_\phi)$ and $\displaystyle\max_M\theta_X(\omega_\phi)$ are independent of the choice of $\omega_\phi\in2\pi c_1(M)$. For convenience, we denote by
$$c_X:=\min_M\{1-\theta_X(\omega_\phi)\}, ~C_X:=\max_M\{1-\theta_X(\omega_\phi)\}.$$
By \cite{Mab1}, Mabuchi metrics exist only if $c_X>0$, and $\omega_\phi\in 2\pi c_1(M)$ is a  Mabuchi metric if
\begin{equation}\label{mabuchi-def}
Ric(\omega_\phi)-\omega_\phi=\sqrt{-1}\partial\bar\partial\log(1-\theta_X(\omega_\phi)).
\end{equation}

In \cite{Tian97},  Tian introduced the notion of
properness of energy functionals as an analytic characterization of existence of K\"ahler-Einsten metrics.
When the automorphism group of $M$ is not discret, a notion of  properness modulo a subgroup of $Aut_0(M)$ was reformulated \cite{CTZ, DR, Tian, ZZ}. In particular, Darvas-Rubinstein established a properness principle and solved Tian's properness conjecture \cite{DR}. It is natural to ask the analogous problem for
Mabuchi metrics.
By \cite{Mab3}, the Mabuchi metric is a critical point of the following {\it modified Ding functional}
\begin{eqnarray}\label{0203}
\mathcal D_{X}(\phi)=-{\frac1V}\int_0^1\int_M\dot\phi_s(1-\theta_X(\omega_{\phi_s}))\omega_{\phi_s}^n\wedge ds-\log\left({\frac1V}\int_M e^{h_{0}-\phi}\omega_0^n\right),
\end{eqnarray}
where $V=\int_M\omega_0^n$, $\{\phi_s\}_{s\in[0,1]}$ is any smooth path in $\mathcal H_{X}(\omega_0)$ joining $0$ and $\phi$, and  $h_0$ is the Ricci potential of $\omega_0$, normalized by $$\int_Me^{h_0}\omega_0^n=\int_M\omega_0^n.$$
Our first main result is the following properness theorem.
\begin{thm}\label{0301}
If $c_X>0$ and the modified Ding functional is proper\footnote{For the definition of properness, see Definition \ref{prop def} below.}  modulo a reductive subgroup $H^c$of $Aut_0(M)$ which contains $K_X$, then $M$ admits Mabuchi metrics.
\end{thm}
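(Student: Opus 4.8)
The plan is to solve the Mabuchi equation \eqref{mabuchi-def} by the continuity method, calling on the properness hypothesis only to produce the a priori $C^0$ bound. First I would recast \eqref{mabuchi-def} as a complex Monge--Amp\`ere equation: since $Ric(\omega_\phi)-\omega_\phi=\sqrt{-1}\partial\bar\partial h_{\omega_\phi}$, the metric $\omega_\phi$ is Mabuchi exactly when $e^{h_{\omega_\phi}}=1-\theta_X(\omega_\phi)$ (the normalizations of $h_{\omega_\phi}$ and of $\theta_X$ force the constant to be $1$), and this unwinds to
\begin{equation*}
(1-\theta_X(\omega_\phi))\,\omega_\phi^n=\frac{V\,e^{h_0-\phi}}{\int_M e^{h_0-\phi}\omega_0^n}\,\omega_0^n .
\end{equation*}
The hypothesis $c_X>0$ gives $1-\theta_X(\omega_\phi)\ge c_X>0$ for every $\omega_\phi\in 2\pi c_1(M)$, so the equation is uniformly elliptic. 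I would then introduce the Aubin-type family $(\star_t)$, $t\in[0,1]$, by replacing $e^{h_0-\phi}$ with $e^{h_0-t\phi}$, so that $t=1$ is the Mabuchi equation, and set $S=\{t\in[0,1]:(\star_t)\text{ is solvable in }\mathcal H_X(\omega_0)\}$. At $t=0$ the normalization $\int_M e^{h_0}\omega_0^n=V$ collapses $(\star_0)$ to the twisted Calabi--Yau equation $(1-\theta_X(\omega_{\phi_0}))\,\omega_{\phi_0}^n=e^{h_0}\omega_0^n$, which carries no destabilizing $e^{-\phi}$ factor and is solvable by a short auxiliary continuity method in the coefficient of $\theta_X$ together with Yau-type estimates; hence $0\in S$.

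For openness I would linearize $(\star_t)$. Writing $\theta_X(\omega_\phi)=\theta_X(\omega_0)+X(\phi)$, the linearized operator on $K_X$-invariant functions is the weighted (drift) Laplacian
\begin{equation*}
L_t\psi=\Delta_{\omega_\phi}\psi+\big\langle\nabla\log(1-\theta_X(\omega_\phi)),\nabla\psi\big\rangle+t\,\psi ,
\end{equation*}
which is self-adjoint with respect to the weighted measure $(1-\theta_X(\omega_\phi))\,\omega_\phi^n$. The first nonzero eigenvalue of the weighted Laplacian is $\ge 1$, and the holomorphy potentials $\theta_v$, $v\in\eta(M)$, saturate the value $1$; thus $\ker L_t$ is nontrivial precisely at $t=1$, while for $t\in(0,1)$ the shift $+t\psi$ makes $L_t$ invertible (and invertible modulo constants at $t=0$). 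The implicit function theorem then gives openness of $S$ on $[0,1)$. The loss of invertibility at $t=1$ is exactly the infinitesimal reflection of the $H^c$-symmetry of the Mabuchi equation, which is why one cannot hope for a solution at $t=1$ by openness alone but must instead arrive there by a limiting argument.

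Closedness is where properness enters. Along $(\star_t)$ a direct differentiation, using the equation, shows that $\mathcal D_X(\phi_t)$ stays uniformly bounded above; properness of $\mathcal D_X$ modulo $H^c$ then forces the Aubin--Mabuchi energy of some translate $\sigma_t^*\omega_{\phi_t}$, $\sigma_t\in H^c$, to be uniformly bounded. From this energy bound, standard $L^\infty$ estimates for the Monge--Amp\`ere equation (or a Moser iteration fed by the Moser--Trudinger inequality encoded in properness) give a uniform bound on $\mathrm{osc}_M\phi_t$; the second-order Aubin--Yau estimate follows, the lower bound $1-\theta_X\ge c_X>0$ being exactly what controls the zeroth-order contribution of the $\theta_X$ term, and Evans--Krylov together with the Schauder theory upgrade these to uniform $C^{k,\alpha}$ bounds. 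Hence $S$ is closed, so $S\supseteq[0,1)$, and along $t\to 1$ the estimates let $\phi_t$ subconverge to a smooth solution of \eqref{mabuchi-def}.

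The main obstacle is the interplay between the automorphism group and the endpoint $t=1$. Properness controls the family only modulo $H^c$, so the gauges $\sigma_t$ are genuinely present and could a priori escape to infinity in $H^c$; the delicate point is to show that, because $\phi_t$ is the \emph{unique} solution of the symmetry-broken equation $(\star_t)$ for $t<1$, the confining term $-t\phi_t$ keeps $\sigma_t$ in a compact part of $H^c$ (equivalently, keeps the normalization and the drift bounded), so that the limit is a bona fide Mabuchi metric rather than a degeneration along the $H^c$-orbit. Maintaining the positivity $1-\theta_X>0$ and the weighted geometry uniformly in $t$ throughout this gauge-fixing is the technical heart of the argument; the reductivity of $H^c$ and the centrality of $X$ in $\eta_r(M)$ are what make it tractable.
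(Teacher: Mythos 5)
Your overall skeleton---the reduction to the Monge--Amp\`ere equation, the continuity path with $e^{h_0-t\phi_t}$, solvability at $t=0$ via an auxiliary continuity method, and openness from the spectral properties of the drift Laplacian---matches the paper (this is exactly \eqref{0302}, Theorem \ref{0501} and Lemma \ref{0302+}). The genuine gap is in the closedness step, and you in fact name it yourself in the final paragraph but then leave it unresolved. Properness modulo $H^c$ bounds only $\inf_{\sigma\in H^c}(I_X-J_X)((\phi_t)_\sigma)$; since the equation $(\star_t)$ is not $H^c$-invariant for $t<1$ (the term $e^{-t\phi}$ breaks the symmetry), a bound on the energy of some translate $(\phi_t)_{\sigma_t}$ gives no $C^0$-control on $\phi_t$ itself, and the translate does not solve any equation of the family. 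Your proposed fix---that uniqueness of the solution of the symmetry-broken equation together with the confining term $-t\phi_t$ should keep $\sigma_t$ in a compact part of $H^c$---is a heuristic, not an argument: nothing you have written rules out that the minimizing gauges escape to infinity while $I_X(\phi_t)-J_X(\phi_t)$ blows up.

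The paper closes precisely this gap with Lemma \ref{0308}: for a solution $\phi_t$ of \eqref{0302} with $t<1$, the infimum of $I_X-J_X$ over the $H^c$-orbit is attained at $\sigma=\mathrm{id}$, so no gauge appears at all. The proof has two ingredients you would need to supply. First, the first variation of $(I_X-J_X)((\phi_t)_{\sigma(s)})$ at $s=0$ vanishes, and this is exactly where the hypothesis $K_X\subset H^c$ and the defining property \eqref{0201} of the extremal vector field (vanishing of the modified Futaki invariant $Fut_X$) enter---note that these hypotheses play no role anywhere in your closedness argument, which should have been a warning sign. Second, $s\mapsto(I_X-J_X)(\phi_{t\,s})$ is convex along the one-parameter orbits, which are geodesics by \eqref{0310}, so the critical point at $s=0$ is a global minimum over each orbit. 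With Lemma \ref{0308} in hand, the upper bound on $\mathcal D_X(\phi_t)$ along the path (Lemma \ref{0305}) plus properness bounds $I_X(\phi_t)-J_X(\phi_t)$ directly, hence $I_X(\phi_t)$ by \eqref{0204}, and then Lemma \ref{I bound} delivers the uniform $C^0$ bound; note also that the paper's $C^0$ estimate is not ``standard $L^\infty$ boilerplate'' but a Moser-type iteration on $e^{-t\phi_t}$ against the weighted measure $(1-\theta_X(\omega_{\phi_t}))\omega_{\phi_t}^n$, driven by the weighted Poincar\'e inequality \eqref{Poincare} from Lemma \ref{0302+} and concluded via Ko{\l}odziej's estimate, so the eigenvalue lemma does double duty (openness and closedness) in a way your sketch does not exploit.
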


It is also interesting to ask the inverse of this theorem.
One can show that the existence of Mabuchi metric implies the properness of  $\mathcal D_X(\cdot)$ modulo the automorphism group of $M$ following the arguments for K\"ahler-Ricci solitons  \cite{CTZ}. However, an optimal properness can be obtained by using the properness principle of Darvas-Rubinstein \cite{DR}.

\begin{thm}\label{inverse proper}
Suppose $Aut_0^X(M)$ is the centralizer of $K_X^c$ in $Aut_0(M)$.
If $M$ admits Mabuchi metrics, then there exists $C, C'>0$, such that
\begin{eqnarray*}
\mathcal D_{X}(\phi)\geq C\inf_{\sigma\in Aut_0^X(M)} J_{X}(\phi_\sigma)-C', \
\forall \phi\in \mathcal H_{X}(\omega_0),
\end{eqnarray*}
where $J_X$ is the modified Aubin's functional (see Section 2.1) $\phi_\sigma$ is defined by
$$\sigma^*(\omega_\phi)=\omega_0+\sqrt{-1}\partial\bar\partial\phi_\sigma.$$
\end{thm}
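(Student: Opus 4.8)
The plan is to invoke the Darvas--Rubinstein properness principle, so the first task is to recast the modified Ding functional problem into the metric-space framework of \cite{DR}. Concretely, I would work on the space $\mathcal{H}_X(\omega_0)$ of $K_X$-invariant K\"ahler potentials equipped with the (weighted) Mabuchi $L^2$-type distance, completed to a metric space on which the relevant geodesics live. The properness principle asserts, roughly, that if an energy functional $\mathcal{D}_X$ satisfies (i) it is convex along geodesics, (ii) its minimizers are exactly the Mabuchi metrics, (iii) it is invariant under the group $G:=Aut_0^X(M)$ (equivalently its maximal compact times the relevant complexification $K_X^c$), and (iv) the existence of a minimizer is assumed, then $\mathcal{D}_X$ is automatically proper modulo $G$ in the quantitative sense stated. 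Thus the proof reduces to verifying these structural hypotheses for $\mathcal{D}_X$.

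The key steps I would carry out are as follows. First, identify the correct symmetry group: since the extremal vector field $X$ lies in the centre $\eta_c(M)$ and $K_X$ sits in a compact torus, the functional $\mathcal{D}_X$ is invariant under the centralizer $Aut_0^X(M)$ of $K_X^c$; I would verify this invariance directly from the definition \eqref{0203}, using that $\theta_X$ transforms equivariantly and that $h_0$ changes by the expected potential under pullback. Second, I would establish \emph{geodesic convexity} of $\mathcal{D}_X$: the entropy/log term $-\log(\frac1V\int_M e^{h_0-\phi}\omega_0^n)$ is convex along weak geodesics by Berndtsson-type positivity, while the linear-in-$\dot\phi$ weighted term contributes the weighted Monge--Amp\`ere energy whose convexity is controlled by the sign of $1-\theta_X=c_X$ on the relevant range; the hypothesis $c_X>0$ (implicit in the existence of a Mabuchi metric) guarantees the weight is positive, so the weighted Aubin--Mabuchi energy is concave and the total $\mathcal{D}_X$ is convex. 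Third, I would show that a Mabuchi metric, being a critical point satisfying \eqref{mabuchi-def}, is a \emph{minimizer} of $\mathcal{D}_X$, which follows from convexity plus the critical-point equation. Finally, I would match $J_X$ (the modified Aubin functional) to the reference distance $d_1$ or $J$-functional used in \cite{DR}, up to the usual two-sided equivalence $J_X \sim d_1 \sim$ Mabuchi energy difference, so that the abstract properness inequality becomes the stated inequality with the infimum over $Aut_0^X(M)$ acting by $\phi\mapsto\phi_\sigma$.

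The main obstacle will be establishing the geodesic convexity and the minimizer property with sufficient regularity to feed into the properness principle. The functional $\mathcal{D}_X$ is a \emph{weighted} (or twisted) version of the classical Ding functional, and the standard Berndtsson convexity theorem applies cleanly only to the untwisted case; here the weight $1-\theta_X(\omega_\phi)$ depends on $\phi$ through the moving metric, so the linear term is genuinely nonlinear and its second variation must be computed carefully to confirm the right sign. I expect to handle this by differentiating twice along a smooth geodesic and integrating by parts, reducing the weighted second variation to a manifestly nonnegative quadratic form once $1-\theta_X>0$ is used; the delicate point is pushing this from smooth to merely $C^{1,1}$ or weak geodesics, where one must appeal to the regularity theory for the homogeneous complex Monge--Amp\`ere equation and approximate the weight. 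A secondary technical point is verifying that $\mathcal{D}_X$ and $J_X$ satisfy the precise coercivity-compatibility axioms of \cite{DR}, in particular that $\mathcal{D}_X$ is lower semicontinuous and that the group orbit structure of $Aut_0^X(M)$ on the completed metric space is closed; once these are in place, the quantitative inequality follows formally from their principle.
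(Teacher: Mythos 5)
Your overall strategy coincides with the paper's: both invoke the Darvas--Rubinstein properness principle and reduce the theorem to verifying the structural hypotheses of \cite[Hypothesis 3.2]{DR}. However, your verification of geodesic convexity contains a genuine error. You assert that when $c_X>0$ the weighted energy part $\mathcal D^0_X$ is \emph{concave} along geodesics and then conclude that $\mathcal D_X=\mathcal N+\mathcal D^0_X$ is convex; but the sum of a convex and a concave functional need not be convex, so as written this step fails, and your proposed fix (``reducing the weighted second variation to a manifestly nonnegative quadratic form once $1-\theta_X>0$ is used'') chases the wrong sign structure. What is actually true, and what the paper proves in Lemma \ref{linear properties}(2), is that $\mathcal D^0_X$ is \emph{affine} along $C^{1,1}$-geodesics: differentiating twice along Chen's $\epsilon$-geodesics, the term produced by $\partial_{\bar\tau}\theta_X(\omega_{\phi^\epsilon_t})=X\bigl(\partial_{\bar\tau}\phi^\epsilon_t\bigr)$ cancels \emph{exactly} against the $X$-term arising from integrating $\bar\partial\theta_X$ by parts, leaving $\partial^2_{\tau\bar\tau}\mathcal D^0_X(\phi^\epsilon_t)=-\epsilon\to0$. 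This cancellation is independent of the sign of $1-\theta_X$; the positivity $c_X>0$ is used elsewhere, namely for the monotonicity of $\mathcal D^0_X$ (Lemma \ref{linear properties}(1)), which is what feeds into (P2) of \cite{DR}. Affinity of $\mathcal D^0_X$ plus Berndtsson convexity of $\mathcal N$ is the correct route to (P1).

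A second, independent gap is that your list (i)--(iv) omits the orbit-structure axiom, (P6) of \cite{DR}, which in the ``existence $\Rightarrow$ properness'' direction is the decisive input: one must know that $Aut_0^X(M)$ acts transitively on the set of minimizers, so that the infimum over $\sigma\in Aut_0^X(M)$ in the conclusion is attainable. Mere invariance of $\mathcal D_X$ under the group (your step (i), which is the cocycle condition (P7)) does not yield this. The paper supplies (P6) via Mabuchi's Calabi--Matsushima-type theorem \eqref{matsushima}, $\mathfrak{aut}^X(M)=\mathfrak{iso}(M,\omega_0)\oplus J\,\mathfrak{iso}(M,\omega_0)$, which replaces \cite[Proposition 6.10]{DR} in the argument of \cite[Theorem 8.1]{DR}; without this reductivity statement at a Mabuchi metric your argument cannot close. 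Similarly, your hypothesis (ii) that ``minimizers are exactly the Mabuchi metrics'' is precisely the nontrivial regularity/uniqueness input (P5), which the paper quotes from \cite[Appendix 4]{Mab2} rather than derives from convexity alone. Your final step matching $J_X$ to the reference functionals of \cite{DR} is fine in spirit, since $J_X$ is uniformly comparable to $J$ by Lemma \ref{0303} together with \eqref{0204}, but the two missing ingredients above must be added before the quantitative inequality follows.
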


\begin{rem}
Suppose $\omega_0$ is a Mabuchi metric on $M$. We can define
\begin{equation}\label{operator}
\Lambda_{1, X}=\{u\in C^\infty(M) | \triangle_{\omega_0}u-\frac{X}{1-\theta_X(\omega_0)}u=-u\}.
\end{equation}
Then by the similar argument as \cite[Lemma 3.2]{WZZ},
one can show that the properness modulo $Aut_0^X(M)$ is equivalent to
the properness for K\"ahler potentials that are perpendicular to
$\Lambda_{1, X}$  with respect to the weighted inner product
$$(\varphi, \psi)=\int_M \varphi\psi (1-\theta_X(\omega_0)) \omega_0^n.$$
\end{rem}

\vskip 5pt

The properness condition can be verified for some special Fano manifolds. A characterization for the properness of the modified Ding functional on toric Fano manifolds has been given by \cite{Na}.  We consider more general Fano group compactifications by using the ideas of \cite{LZZ}, in which the modified K-energy is discussed. Let $G$ be a connected, complex reductive group of dimension $n$,  we call $M$ a {\it (bi-equivariant) compactification of $G$} if it admits a holomorphic $G\times G$ action on $M$ with an open and dense orbit isomorphic to $G$ as a $G\times G$-homogeneous space \cite{AK, Del2}.  $(M, L)$ is called a {\it polarized compactification} of $G$ if  $L$ is a $G\times G$-linearized ample line bundle on $M$. In particular, when $L=-K_M$, we call $M$ a \emph{Fano group compactification}.
We establish the criterion for the existence of Mabuchi metrics on Fano group compactifications.

\begin{thm}\label{bar condition thm}
Let $(M,-K_M)$ be a Fano compactification of $G$ and $P$ be the associated polytope. Then  $M$ admits Mabuchi metrics
if and only if $c_X>0$ and
\begin{eqnarray}\label{bar condition}
\mathbf b_X-4\rho\in\Xi,
\end{eqnarray}
where
\begin{eqnarray*}
\mathbf b_X&=&{\frac1V}\int_{2P_+}y[1-\theta_X(y)]\pi(y)\,dy,\\
\pi(y)&=&\prod_{\alpha\in\Phi_+}\langle\alpha,y\rangle^2,\ \ V=\int_{2P_+}\pi(y)\,dy,
\end{eqnarray*}
$\Xi$ is the relative interior of the cone generated by positive roots $\Phi_+$, $\rho={\frac 1 2}\sum_{\alpha\in\Phi_+}\alpha$ and $\theta_X(y)$ is the normalized potential of $X$ viewed as a function on $2P_+$, which will be described in Lemma \ref{coefficients X} below. For notations on group compactifications, see \S\ref{grp cpt}.
\end{thm}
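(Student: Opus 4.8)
The plan is to convert the existence of Mabuchi metrics into a properness statement via Theorems \ref{0301} and \ref{inverse proper}, and then to test that properness against the convex-geometric reduction available for group compactifications. First I would restrict to $K\times K$-invariant (and $K_X$-invariant) K\"ahler potentials. By the Cartan decomposition $G=K\exp(\mathfrak a)K$, such a potential $\phi$ is determined by a Weyl-invariant smooth convex function $\Psi$ on the Cartan subspace $\mathfrak a$ whose gradient image is the polytope $2P$; its Legendre dual (the symplectic potential) lives on $2P_+$. The crucial input, following \cite{Del2, LZZ}, is the integration formula reducing the complex Monge--Amp\`ere measure to the real one: for a $K\times K$-invariant $f$,
\begin{equation*}
\int_M f\,\omega_\Psi^n = C\int_{2P_+} f^{\ast}(y)\,\pi(y)\,dy,
\end{equation*}
where the Jacobian of the reduction produces exactly the Duistermaat--Heckman weight $\pi(y)=\prod_{\alpha\in\Phi_+}\langle\alpha,y\rangle^2$, and the correspondence $y=\nabla\Psi(x)$ is the Legendre transform.

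Next I would rewrite the modified Ding functional $\mathcal D_X$ in these coordinates. Since $X\in\eta_c(M)$ corresponds to a torus direction, its normalized potential $\theta_X(\omega_\Psi)$ descends to the function $\theta_X(y)$ of Lemma \ref{coefficients X}, which is affine on $2P_+$. The linear term of $\mathcal D_X$ then becomes, after the reduction, a linear functional in $\Psi$ whose slope is precisely the weighted barycenter $\mathbf b_X=\frac1V\int_{2P_+}y[1-\theta_X(y)]\pi(y)\,dy$. The logarithmic term $-\log(\frac1V\int_M e^{h_0-\phi}\omega_0^n)$ reduces, up to constants, to $-\log\int_{\mathfrak a_+}e^{-\Psi(x)}\prod_{\alpha\in\Phi_+}\sinh^2\tfrac{\langle\alpha,x\rangle}{2}\,dx$; its asymptotics as $\Psi$ degenerates are governed by the exponential growth $e^{\langle 2\rho,x\rangle}$ of the weight, together with the normalization of $h_0$ and the scaling of the anticanonical polytope $2P_+$, and this is what produces the shift by $4\rho$.

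With the functional reduced, the heart of the argument is to match properness modulo $H^c$ to the barycenter condition. I would analyze the behaviour of the reduced $\mathcal D_X$ along linear degenerations $\Psi\rightsquigarrow\Psi+t\ell$ and along sequences whose gradients concentrate near the boundary of $2P_+$. After subtracting the action of the reductive automorphisms $H^c$ (which acts in the reduced picture through the central torus directions), the functional is coercive exactly when the linear slope $\mathbf b_X$, corrected by the $4\rho$ contribution of the Ding term, pairs positively with every nonzero admissible recession direction. By a convex-duality computation, analogous to Wang--Zhu and Delcroix but carried out for the Ding rather than the Mabuchi integrand, the set of such slopes is identified with the relative interior $\Xi$ of the cone generated by $\Phi_+$, so coercivity is equivalent to $\mathbf b_X-4\rho\in\Xi$.

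Finally I would assemble the two implications. If $c_X>0$ and $\mathbf b_X-4\rho\in\Xi$, the previous step gives properness of $\mathcal D_X$ modulo the reductive group $H^c$ containing $K_X$, so Theorem \ref{0301} yields a Mabuchi metric. Conversely, if a Mabuchi metric exists, then $c_X>0$ by the necessary condition recalled after \eqref{mabuchi-def}, and Theorem \ref{inverse proper} furnishes the properness inequality for $\mathcal D_X$ modulo $Aut_0^X(M)$; feeding this back through the reduction forces $\mathbf b_X-4\rho\in\Xi$. The main obstacle I anticipate is the third step, and within it the precise identification of the $4\rho$ shift and of the cone $\Xi$: one must control the Legendre transform of $\Psi$ near the walls of the Weyl chamber, where the weight $\pi(y)\sim\prod_{\alpha}\langle\alpha,y\rangle^2$ degenerates, and correctly quotient the automorphism translations so that the criterion emerges as membership in an \emph{open} cone rather than equality to a single point.
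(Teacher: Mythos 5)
Your sufficiency argument has the right skeleton (reduce $\mathcal D_X$ to the polytope via the $KAK$ integration formula, identify the linear slope $\mathbf b_X$, then invoke Theorem \ref{0301}), and this is indeed the paper's strategy through Proposition \ref{Ding energy}. But the step where you assert that ``coercivity is equivalent to $\mathbf b_X-4\rho\in\Xi$'' by analyzing linear degenerations $\Psi\rightsquigarrow\Psi+t\ell$ and boundary-concentrating sequences is a genuine gap: testing along rays can only show the cone condition is \emph{necessary} for properness, never that it is sufficient. To get properness you need a uniform lower bound over all normalized convex functions, and the obstruction is the nonlinear term $\mathcal F(u)=-\log\bigl(\int_{\mathfrak a_+}e^{-\psi}\mathbf J\,dx\bigr)+u(4\rho)$, which is not bounded below and cannot be discarded along arbitrary sequences. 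The paper closes this with three ingredients absent from your outline: a compactness/contradiction argument giving $\mathcal L_X(u)\geq\lambda\int_{2P_+}u(1-\theta_X)\pi\,dy$ on $\hat{\mathcal C}_W$ (Proposition \ref{linear proper thm}); the scaling inequality $\mathcal F(u)\geq\mathcal F\bigl(u/(1+c)\bigr)+n\log(1+c)$ (Lemma \ref{non-linear lem}); and an auxiliary functional $\mathcal D_A$ that is bounded below because $\mathcal N$ is geodesically convex \cite{Bern}, its linear part is affine along geodesics (which are linear in Legendre coordinates, Lemma \ref{0601}), and $u_0$ is its critical point (Proposition \ref{proper mod thm}). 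The coercivity of $\mathcal L_X$ is spent precisely to pass from $\mathcal L_X$ to $\mathcal L_A$ and from $\mathcal F(u)$ to $\mathcal F\bigl(u/(1+C)\bigr)$; without this mechanism, or a substitute for it, the equivalence you claim is unproven.

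For necessity you take a genuinely different route from the paper --- via Theorem \ref{inverse proper} --- and it also has a hole. The properness furnished by that theorem is modulo $Aut_0^X(M)$, which for a group compactification contains the image of $G\times G$: since $X\in\mathfrak z(\mathfrak g)$, its flow is translation by central elements, so all of $G\times G$ centralizes $K_X^c$. To ``feed the properness back through the reduction'' you must produce $W$-invariant degenerations $u_t=u_0+t\ell_\xi$ with $\mathcal D_X(u_t)\leq C+t\langle\xi,\mathbf b_X-4\rho\rangle$ \emph{and} show that $\inf_{\sigma\in Aut_0^X(M)}J_X\bigl((\phi_t)_\sigma\bigr)\to\infty$; controlling this infimum over the whole nonabelian group, not just over $Z(G)$, is exactly the delicate point, since translation-type rays are dangerously close to being induced by one-parameter subgroups of $G\times G$ (only the $W$-invariantization prevents this, and nothing in your outline exploits that). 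The paper sidesteps the issue entirely: necessity of \eqref{bar condition} is proved directly from the reduced Monge--Amp\`ere equation \eqref{MA on a_+}, by integrating $\xi^i\partial_i e^{-(\psi_0+\phi-\log\mathbf J)}$ over $\mathfrak a_+$ and using $\coth\alpha(x)>1$, which gives $\langle\xi,\mathbf b_X-4\rho\rangle>0$ for every $\xi\in\bar{\mathfrak a}_+$ with nonzero semisimple component --- no properness principle is needed (while $c_X>0$ is Mabuchi's necessary condition, as you correctly note). So either supply the group-orbit lower bound for $J_X$, or replace this step by the direct integration-by-parts argument.
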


The paper is organized as follows:
In Section 2, we first review some preliminaries on energy functionals and
the definition of properness modulo an automorphism group. Then we recall basic properties of polarized compactifications. Theorem \ref{0301} and \ref{inverse proper} will be proved in Section 3. In  Section 4, we obtain Theorem \ref{bar condition thm}. The sufficient part will be proved by the verification of properness of the modifiend Ding  functional.

\vskip 5pt

{\bf Acknowledgments.} The authors would like to thank Professor Xiaohua Zhu and Yi Yao for many valuable discussions.

%For notations and conventions, see \S 2 below.

\section{Preliminaries}

In this section, we first review the  notions of energy functionals associated to Mabuchi metrics.
Then we recall the basic knowledge for group compactifications for later use.

\subsection{Reduction to the complex Monge-Amp\`ere equations}

It is clear that \eqref{mabuchi-def} is equivalent to the following equation
\begin{eqnarray}\label{0202}
\omega^n_\phi(1-\theta_X(\omega_\phi))=\omega^n_0e^{h_0-\phi}.
\end{eqnarray}
%Denote by $Fut(v)$ the Futaki invariant of $v$. Let $\omega_0$ be a %$\Gamma $-invariant
%K\"ahler metric. The extremal vector field $X$ introduced in \cite{Mabuchi-Futaki} is determined by
%\begin{eqnarray}\label{0201}
%Fut_X(v):=Fut(v)+\int_M\theta_v(\omega_0)\theta_X(\omega_0)\omega_0^n=0,~\forall v\in\eta(M).
%\end{eqnarray}
%It is showed that $X$ is independent with the choice of $\omega_0$, and is in $\eta_c(M)$, the centre of $\eta_r(M)$ and $K_X\subset \Gamma $.
% and $V$ is the volume of $(M,\omega_\phi)$ which is in fact a constant independent of $\phi$.
%A usual approach to solve (\ref{0202}) is the continuity method.
We consider the following continuity path
\begin{eqnarray}\label{0302}
\omega^n_{\phi_t}(1-\theta_X(\omega_{\phi_t}))=\omega^n_0e^{h_0-t\phi_t},~t\in[0,1].
\end{eqnarray}
Denote $\mathfrak I:=\{t\in[0,1]|(\ref{0302}) \text{ has a solution for $t$}\}$.
Then $\mathfrak I$ is open by the implicit function theorem. For the starting point $t=0$, we have

\begin{thm}\label{0501}
When $c_X>0$, (\ref{0302}) has a solution at $t=0$.
\end{thm}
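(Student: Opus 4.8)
The plan is to solve the $t=0$ equation
\[
\omega_\phi^n\,(1-\theta_X(\omega_\phi))=e^{h_0}\,\omega_0^n
\]
directly, by an auxiliary continuity method that is independent of the main path (\ref{0302}). First I would record the integrability condition. Since $\theta_X(\omega_\phi)=\theta_X(\omega_0)+X(\phi)$ and the integral $\int_M\theta_X(\omega_\phi)\,\omega_\phi^n$ is independent of $\phi$ (hence equal to its value $0$ at $\omega_0$ coming from (\ref{0201+})), one has $\int_M(1-\theta_X(\omega_\phi))\,\omega_\phi^n=V=\int_M e^{h_0}\,\omega_0^n$, so the prescribed right-hand side carries the correct total mass. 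As all data are $K_X$-invariant, the equation may be solved within $\mathcal H_X(\omega_0)$.

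Next I introduce, for $s\in[0,1]$, the path
\[
\omega_{\phi_s}^n\,(1-\theta_X(\omega_{\phi_s}))=e^{sh_0+b_s}\,(1-\theta_X(\omega_0))^{1-s}\,\omega_0^n,
\]
where the constant $b_s$ is fixed by the integrability condition above. At $s=0$ the function $\phi_0=0$ solves the equation (here $b_0=0$ because $\int_M\theta_X(\omega_0)\,\omega_0^n=0$), while at $s=1$ one recovers the target equation (here $b_1=0$ because $\int_M e^{h_0}\,\omega_0^n=V$). I would then show that the set of $s\in[0,1]$ for which the path is solvable is both open and closed.

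Openness follows from the implicit function theorem once the linearization
\[
L\psi=\triangle_{\omega_{\phi_s}}\psi-\frac{X(\psi)}{1-\theta_X(\omega_{\phi_s})}
\]
is shown to be invertible on the appropriate H\"older spaces. Because the right-hand side of the path carries no $\phi$-dependent factor, there is no obstruction from holomorphic vector fields here: $L$ is self-adjoint with respect to the positive weighted measure $(1-\theta_X(\omega_{\phi_s}))\,\omega_{\phi_s}^n$, and integration by parts shows that its kernel consists only of constants. Hence $L$ restricts to an isomorphism between the corresponding mean-zero subspaces, which gives openness.

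Closedness rests on uniform a priori estimates, and the decisive structural feature is Mabuchi's observation that $\min_M(1-\theta_X(\omega_\phi))=c_X$ and $\max_M(1-\theta_X(\omega_\phi))=C_X$ are independent of $\phi$. Under the hypothesis $c_X>0$ the weight therefore satisfies $0<c_X\le 1-\theta_X(\omega_{\phi_s})\le C_X$ uniformly, so the equation takes the form $\omega_{\phi_s}^n=F_s\,\omega_0^n$ with $F_s$ bounded above and below by positive constants independent of $s$; the standard $L^\infty$ estimate then yields a uniform $C^0$ bound for the normalized potentials. The second-order (Laplacian) estimate follows from the Aubin--Yau inequality, the first-order drift term $X(\phi_s)$ being controlled precisely because $1-\theta_X(\omega_{\phi_s})\ge c_X>0$, and the higher-order bounds follow from the Evans--Krylov and Schauder theory. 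I expect these a priori estimates, and in particular the uniform treatment of the first-order term $X(\phi_s)$ in the second-order estimate, to be the main technical point, whereas the $C^0$ bound and the openness step are rendered transparent by the hypothesis $c_X>0$. Combining openness and closedness shows that the path reaches $s=1$, which is exactly the solvability of (\ref{0302}) at $t=0$.
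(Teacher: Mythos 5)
Your proposal is correct in substance, but it runs along a genuinely different continuity path from the paper's. The paper's appendix deforms the \emph{exponent of the weight}: it solves $(1-\theta_X(\omega_{\phi_t}))^t\omega_{\phi_t}^n=e^{h_0}\omega_0^n$ for $t\in[0,1]$ (its equation (\ref{0502})), so the starting point $t=0$ is the Calabi--Yau equation, solved by Yau's theorem, and the endpoint $t=1$ is the target equation; openness there is proved via a Bochner-type computation (Lemma \ref{0504}) showing positivity of the first eigenvalue of the linearized operator, and closedness uses the uniform bounds $c_X\le 1-\theta_X(\omega_\phi)\le C_X$ together with the $C^2$ estimate of Zhu for soliton-type equations. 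You instead keep the weight $(1-\theta_X(\omega_{\phi_s}))$ intact on the left and deform the right-hand density from $(1-\theta_X(\omega_0))\omega_0^n$ to $e^{h_0}\omega_0^n$, with explicitly computable normalizing constants $b_s$ (vanishing at both endpoints, as you checked via the $\phi$-independence of $\int_M\theta_X(\omega_\phi)\omega_\phi^n$). This buys two things: the path starts at the explicit solution $\phi_0=0$, so no appeal to the Calabi--Yau theorem is needed; and the linearization is the same drift Laplacian $\triangle-X(\cdot)/(1-\theta_X(\omega_{\phi_s}))$ for every $s$, whose self-adjointness for the weighted measure $(1-\theta_X(\omega_{\phi_s}))\omega_{\phi_s}^n$ and trivial kernel (this is exactly the content of Remark \ref{rmk 2.3}) make openness a one-line Fredholm argument, in place of the paper's eigenvalue lemma. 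What the paper's path buys in exchange is a right-hand side $e^{h_0}\omega_0^n$ that is literally fixed along the deformation. The closedness mechanisms are essentially identical, both resting on Mabuchi's observation that $\min_M\theta_X(\omega_\phi)$ and $\max_M\theta_X(\omega_\phi)$ are metric-independent.

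One point to tighten in your write-up: the Laplacian estimate is not a direct application of the Aubin--Yau inequality, because the right-hand side of your equation depends on $\nabla\phi_s$ through $X(\phi_s)$. Applying $\triangle_{\omega_0}$ to the logarithm of the equation produces a genuinely third-order term of the type $X(\triangle_{\omega_0}\phi_s)$, which cannot be bounded pointwise by $C_1(n+\triangle_{\omega_0}\phi_s)+C_2$. This is precisely the difficulty treated in Zhu's paper on soliton-type equations, which the paper's appendix invokes: at the maximum point of $e^{-c\phi_s}(n+\triangle_{\omega_0}\phi_s)$ the vanishing of the gradient converts $X(\triangle_{\omega_0}\phi_s)$ into $c(n+\triangle_{\omega_0}\phi_s)X(\phi_s)$, which is then absorbed using the uniform bound $|X(\phi_s)|=|\theta_X(\omega_{\phi_s})-\theta_X(\omega_0)|\le C$ that follows from the same min/max invariance. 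Since you already flagged the drift term as the main technical point and have the required uniform bound in hand, this is a refinement of your argument rather than a gap; but ``Aubin--Yau plus boundedness of the drift'' alone does not close the estimate.
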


Since we did not find a reference for this result, we give a proof of it for completeness in the appendix.
Hence, $0\in\mathfrak I$ and there exists an $\epsilon_0>0$ such that (\ref{0302}) has a solution for $t\in[0,\epsilon_0]$. For the closedness of $\mathfrak I$,
it suffices to establish the $C^0$-estimate of (\ref{0302}).
The following lemmas will be used later.
\begin{lem}\label{0302+}
Let $\phi_t$ be a solution of (\ref{0302}) at $t$, then the first eigenvalue of
\begin{eqnarray}\label{eigenvalue}
L_t:=\bigtriangleup_{\omega_{\phi_t}}-{\frac{X}{1-\theta_X(\omega_{\phi_t})}}+t
\end{eqnarray}
is nonnegative for $t\in[0,1]$ and equals to $0$ only if $t=1$. Consequently, we have the following weighted Poincar\'e inequality
\begin{eqnarray}\label{Poincare}
&&\int_M|\bar\partial\psi|_{\omega_{\phi_t}}^2(1-\theta_X(\omega_{\phi_t}))\omega_{\phi_t}^n\notag\\
&\geq& t\left[\int_M\psi^2(1-\theta_X(\omega_{\phi_t}))\omega^n_{\phi_t}-{\frac1V}\left(\int_M\psi(1-\theta_X(\omega_{\phi_t}))\omega_{\phi_t}^n\right)^2\right].
\end{eqnarray}
for any $K_X$-invariant $\psi\in C^{1,\alpha}$.
\end{lem}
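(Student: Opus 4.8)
The plan is to rewrite the continuity equation (\ref{0302}) as a lower bound for a weighted (Bakry--\'Emery) Ricci curvature, and then run a weighted Lichnerowicz eigenvalue estimate. Write $w_t:=1-\theta_X(\omega_{\phi_t})$, which is positive since $w_t\geq c_X>0$, and note $\int_M w_t\,\omega_{\phi_t}^n=V$ because $\theta_X(\omega_{\phi_t})$ has zero average against $\omega_{\phi_t}^n$. Applying $\sqrt{-1}\partial\bar\partial\log$ to both sides of (\ref{0302}), and using $\sqrt{-1}\partial\bar\partial\log\det(g_{\phi_t})=-Ric(\omega_{\phi_t})$ together with $\sqrt{-1}\partial\bar\partial h_0=Ric(\omega_0)-\omega_0$ and $\sqrt{-1}\partial\bar\partial\phi_t=\omega_{\phi_t}-\omega_0$, I obtain
\[
Ric(\omega_{\phi_t})-\sqrt{-1}\partial\bar\partial\log w_t=t\,\omega_{\phi_t}+(1-t)\,\omega_0.
\]
Since $\omega_0>0$ and $1-t\geq 0$, this gives the key lower bound $Ric(\omega_{\phi_t})-\sqrt{-1}\partial\bar\partial\log w_t\geq t\,\omega_{\phi_t}$, strict whenever $t<1$.

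Next I would observe that, on $K_X$-invariant functions, the operator $\square_{w_t}:=\triangle_{\omega_{\phi_t}}-X/w_t=L_t-t$ is exactly the weighted Laplacian of the measure $w_t\,\omega_{\phi_t}^n$, i.e. it is self-adjoint and satisfies $\int_M(\square_{w_t}u)v\,w_t\,\omega_{\phi_t}^n=-\int_M\langle\bar\partial u,\bar\partial v\rangle_{\omega_{\phi_t}}w_t\,\omega_{\phi_t}^n$ (the restriction to $K_X$-invariant $u$ is what makes $w_t$ real and the problem genuinely self-adjoint). Let $\mu_1(t)>0$ be the first nonzero eigenvalue of $-\square_{w_t}$. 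The two eigenvalue assertions of the lemma amount to $\mu_1(t)\geq t$, with equality only at $t=1$; and the Poincar\'e inequality (\ref{Poincare}) then follows at once by applying the variational characterization of $\mu_1(t)$ to $\psi-\bar\psi$, where $\bar\psi:=\frac1V\int_M\psi\,w_t\,\omega_{\phi_t}^n$, since
\[
\int_M(\psi-\bar\psi)^2 w_t\,\omega_{\phi_t}^n=\int_M\psi^2 w_t\,\omega_{\phi_t}^n-\frac1V\Big(\int_M\psi\,w_t\,\omega_{\phi_t}^n\Big)^2.
\]

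To prove $\mu_1(t)\geq t$, let $u$ be a $K_X$-invariant first eigenfunction, $\square_{w_t}u=-\mu_1 u$, and apply the weighted Bochner (K\"ahler Bakry--\'Emery) identity
\[
\int_M|\nabla_{\bar i}\nabla_{\bar j}u|^2 w_t\,\omega_{\phi_t}^n=\int_M(\square_{w_t}u)^2 w_t\,\omega_{\phi_t}^n-\int_M\big(Ric(\omega_{\phi_t})-\sqrt{-1}\partial\bar\partial\log w_t\big)(\partial u,\bar\partial u)\,w_t\,\omega_{\phi_t}^n.
\]
Here the first term on the right equals $\mu_1^2\int_M u^2 w_t\,\omega_{\phi_t}^n$, while, using the lower bound of the first paragraph together with $\int_M|\bar\partial u|^2 w_t\,\omega_{\phi_t}^n=\mu_1\int_M u^2 w_t\,\omega_{\phi_t}^n$, the Ricci term is bounded below by $t\mu_1\int_M u^2 w_t\,\omega_{\phi_t}^n$. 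Since the left-hand side is nonnegative, $0\leq \mu_1(\mu_1-t)\int_M u^2 w_t\,\omega_{\phi_t}^n$, whence $\mu_1(t)\geq t$.

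For the strictness and the equality case, note that if $\mu_1(t)=t$ then both terms on the right of the Bochner identity must be tight: $\nabla_{\bar i}\nabla_{\bar j}u\equiv 0$ and the Ricci inequality is an equality, which by the first paragraph forces $(1-t)\,\omega_0(\partial u,\bar\partial u)\equiv 0$. When $t<1$ this gives $\partial u\equiv 0$, contradicting that $u$ is a nonconstant eigenfunction; hence $\mu_1(t)>t$ for $t<1$, and $\mu_1(t)=t$ can occur only at $t=1$ (where it is realized by a holomorphic vector field). The conceptual key is the reformulation in the first paragraph as a Bakry--\'Emery curvature bound; the main technical care lies in the weighted Bochner identity with its drift/weight bookkeeping and in the rigidity of its equality case, both of which are standard once the curvature lower bound is in hand.
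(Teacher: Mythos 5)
Your proposal is correct and is in substance the same proof as the paper's: the Bakry--\'Emery identity $Ric(\omega_{\phi_t})-\sqrt{-1}\partial\bar\partial\log(1-\theta_X(\omega_{\phi_t}))=t\,\omega_{\phi_t}+(1-t)\,\omega_0$ is exactly the information the paper injects when it ``uses (\ref{0302})'' in the last step, and the weighted Bochner identity you cite is precisely what the paper's two displayed computations (expansion of the eigenvalue equation, Ricci identity, integration by parts) establish for the eigenfunction, yielding the same final identity $\lambda\int_M|\partial\psi|^2w_t\,\omega_{\phi_t}^n=\int_M|\psi_{,ij}|^2w_t\,\omega_{\phi_t}^n+(1-t)\int_M\omega_0(\partial\psi,\bar\partial\psi)\,w_t\,\omega_{\phi_t}^n$ and the same rigidity for $t<1$. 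Your packaging as a weighted Lichnerowicz estimate is a cleaner organization but not a different route.
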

\begin{rem}\label{rmk 2.3}
We remark that $L_t$ is self-dual on the space of real-valued $K_X$-invariant functions, equipped with the weighted inner product (cf. \cite[Lemma 2.1]{Mab2})
\begin{eqnarray*}
\langle f,g\rangle_t:=\int_MfL_t(g)(1-\theta_X(\omega_{\phi_t}))\omega_{\phi_t}^n.
\end{eqnarray*}
\end{rem}
\begin{proof}[Proof of Lemma \ref{0302+}]
Without loss of generality, we may choose a local co-frame $\{\Theta^i\}_{i=1}^n$ such that $\omega_{\phi_t}=\sqrt{-1}\sum_{i=1}^n\Theta^i\wedge\bar\Theta^i$. Suppose
$L_t\psi=-\lambda\psi$.
%\left(\bigtriangleup_{\omega_{\phi_t}}-{\frac{X}{1-\theta_X(\omega_{\phi_t})}}+t\right)\psi=-\lambda\psi,
Then
\begin{eqnarray}\label{0205}
&&\lambda\int_M\psi_{,i}\psi_{,\bar i}(1-\theta_X(\omega_{\phi_t}))\omega^n_{\phi_t}\notag\\
&=&-\int_M\left[\left(\bigtriangleup_{\omega_{\phi_t}}-{\frac{X}{1-\theta_X(\omega_{\phi_t})}}+t\right)\psi\right]_{,i}\psi^{,i}(1-\theta_X(\omega_{\phi_t}))\omega^n_{\phi_t}\notag\\
&=&-\int_M{{\psi_{,j\bar ji}}}\psi_{,\bar i}(1-\theta_X(\omega_{\phi_t}))\omega^n_{\phi_t}+\int_MX^j_{,i}\psi_{,\bar i}\psi_{,j}\omega^n_{\phi_t}
+\int_MX^i\psi_{,ij}\psi_{,\bar j}\omega^n_{\phi_t}\notag\\
&&+\int_M{\frac{X(\psi)\theta_X(\omega_{\psi_t})_{,i}}{[1-\theta_X(\omega_{\phi_t})]^2}}\psi_{,\bar i}(1-\theta_X(\omega_{\phi_t}))\omega^n_{\phi_t}-t\int_M\psi_{,i}\psi_{,\bar i}(1-\theta_X(\omega_{\phi_t}))\omega^n_{\phi_t},
\end{eqnarray}
here and below, we denote $\phi_{,i}$ for covariant derivatives with respect to $\omega_{\phi_t}$, similar conventions are used for covariant derivatives of other tensors.

By Ricci identity and integration by parts, we have
\begin{eqnarray*}\label{0206}
&&-\int_M{{\psi_{,j\bar ji}}}\psi_{,\bar i}(1-\theta_X(\omega_{\phi_t}))\omega^n_{\phi_t}\notag\\
&=&-\int_M{{\psi_{,ij\bar j}}}\psi_{,\bar i}(1-\theta_X(\omega_{\phi_t}))\omega^n_{\phi_t}+\int_M\psi_{,\bar j}\psi_{,\bar i}Ric_{i\bar j}(1-\theta_X(\omega_{\phi_t}))\omega^n_{\phi_t}\notag\\
&=&\int_M{\psi_{,ij}}\psi_{,\bar i\bar j}(1-\theta_X(\omega_{\phi_t}))\omega^n_{\phi_t}-\int_MX^i\psi_{,ij}\psi_{,\bar j}\omega^n_{\phi_t}+\int_M\psi_{,\bar j}\psi_{,\bar i}Ric_{i\bar j}(1-\theta_X(\omega_{\phi_t}))\omega^n_{\phi_t}.
\end{eqnarray*}
Substituting this into (\ref{0205}) and using (\ref{0302}), it follows
\begin{eqnarray*}
&&\lambda\int_M\psi_{,i}\psi_{,\bar i}(1-\theta_X(\omega_{\phi_t}))\omega^n_{\phi_t}\notag\\
&=&\int_M{\psi_{,ij}}\psi_{,\bar i\bar j}(1-\theta_X(\omega_{\phi_t}))\omega^n_{\phi_t}+(1-t)\int_M\psi_{,\bar i}\psi_{,j}g_{i\bar j}(0)(1-\theta_X(\omega_{\phi_t}))\omega^n_{\phi_t},
\end{eqnarray*}
where $\omega_0=\sqrt{-1}g_{i\bar j}(0)\Theta^i\wedge\bar\Theta^j$, the lemma is proved.
\end{proof}

%For this purpose, we consider the associated energy functionals.

\subsection{Energy functionals}

Recall that the Aubin's functionals are given by
\begin{eqnarray*}
I(\phi)&=&\int_M\phi(\omega_0^n-\omega_\phi^n),\\
J(\phi)&=&\int_0^1\int_M\dot\phi_s(\omega_0^n-\omega_{\phi_s}^n)\wedge ds,
\end{eqnarray*}
where $\{\phi_s\}_{s\in[0,1]}$ is any smooth path in $\mathcal H_{X}(\omega_0)$ joining $0$ and $\phi$. It is known that \cite{Tian297}
\begin{eqnarray}\label{0204+}
0\leq {\frac1n}J(\phi)\leq I(\phi)-J(\phi) \leq nJ(\phi).
\end{eqnarray}
To deal with Mabuchi metrics, the following modified functionals  were introduced in \cite{Mab2}
\begin{eqnarray*}
I_{X}
(\phi)&=&\int_M\phi[(1-\theta_X(\omega_0))\omega_0^n-(1-\theta_X(\omega_\phi))\omega_\phi^n],\\
J_{X}(\phi)&=&\int_0^1\int_M\dot\phi_s[(1-\theta_X(\omega_0))\omega_0^n-(1-\theta_X(\omega_{\phi_s}))\omega_{\phi_s}^n]\wedge ds.
\end{eqnarray*}
By \cite[Remark A.1.9]{Mab2}, when $c_X>0$,
\begin{eqnarray}\label{0204}
0\leq I_{X}(\phi)\leq(n+2)(I_{X}(\phi)-J_{X}(\phi))\leq(n+1)I_{X}(\phi).
\end{eqnarray}

\begin{lem}\label{0303}
There are positive constants $c_1,c_2>0$ such that
\begin{eqnarray}\label{0304}
c_1I(\phi)\leq I_{X}(\phi)-J_{X}(\phi)\leq c_2I(\phi).
\end{eqnarray}
\end{lem}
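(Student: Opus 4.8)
The plan is to produce manifestly nonnegative integral representations of both $I(\phi)-J(\phi)$ and $I_{X}(\phi)-J_{X}(\phi)$ and then to compare them pointwise through the uniform bounds $c_X\le 1-\theta_X(\omega_\phi)\le C_X$. Since all four functionals are path-independent (their definitions allow an arbitrary smooth path), I would fix $\phi\in\mathcal H_X(\omega_0)$ and compute along the linear path $\phi_s=s\phi$, $s\in[0,1]$; this path lies in $\mathcal H_X(\omega_0)$ because $\omega_{\phi_s}=(1-s)\omega_0+s\omega_\phi>0$ and $s\phi$ is $K_X$-invariant. Differentiating the classical pair in $s$ and integrating by parts once gives the standard identity
\[
I(\phi)-J(\phi)=\int_0^1 s\Big(\int_M|\partial\phi|_{\omega_{\phi_s}}^2\,\omega_{\phi_s}^n\Big)\,ds .
\]

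The core of the argument is the weighted analogue
\[
I_{X}(\phi)-J_{X}(\phi)=\int_0^1 s\Big(\int_M(1-\theta_X(\omega_{\phi_s}))\,|\partial\phi|_{\omega_{\phi_s}}^2\,\omega_{\phi_s}^n\Big)\,ds .
\]
To establish it I would differentiate $I_X(\phi_s)-J_X(\phi_s)$. Using $\theta_X(\omega_{\phi_s})=\theta_X(\omega_0)+sX(\phi)$ and $\tfrac{d}{ds}\omega_{\phi_s}^n=n\sqrt{-1}\partial\bar\partial\phi\wedge\omega_{\phi_s}^{n-1}$, the derivatives of $I_X$ and $J_X$ combine to
\[
\tfrac{d}{ds}[I_X(\phi_s)-J_X(\phi_s)]=s\int_M\phi\,X(\phi)\,\omega_{\phi_s}^n-s\int_M\phi\,(1-\theta_X(\omega_{\phi_s}))\,n\sqrt{-1}\partial\bar\partial\phi\wedge\omega_{\phi_s}^{n-1}.
\]
Integrating the second term by parts with $f=\phi(1-\theta_X(\omega_{\phi_s}))$, so that $\partial f=(1-\theta_X)\partial\phi-\phi\,\partial\theta_X$, produces the positive weighted Dirichlet term $\int_M(1-\theta_X)|\partial\phi|^2\omega^n$ together with a cross term $-\int_M\phi\,n\sqrt{-1}\partial\theta_X\wedge\bar\partial\phi\wedge\omega^{n-1}$.

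The hard part will be the cancellation of the two cross terms, and this is exactly where the hypothesis that $\phi$ is $K_X$-invariant enters. From \eqref{0201+} one has $X^i=g^{i\bar j}(\theta_X)_{,\bar j}$, whence $n\sqrt{-1}\partial\theta_X\wedge\bar\partial\phi\wedge\omega^{n-1}=\overline{X(\phi)}\,\omega^n$; moreover $K_X$-invariance of $\phi$ forces $X(\phi)$ to be real-valued, so $\overline{X(\phi)}=X(\phi)$ and the two cross terms cancel, leaving precisely the weighted formula above. I would carry out this weighted integration by parts carefully, as tracking the weight together with the conjugation is the only nonroutine computation in the proof.

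Finally, I would invoke Mabuchi's observation that $\min_M$ and $\max_M$ of $1-\theta_X(\omega_{\phi_s})$ are independent of the representative, so that $c_X\le 1-\theta_X(\omega_{\phi_s})\le C_X$ holds pointwise and uniformly in $s$. Since $|\partial\phi|^2\omega^n\ge0$, comparing the two integral representations termwise yields $c_X\,(I(\phi)-J(\phi))\le I_X(\phi)-J_X(\phi)\le C_X\,(I(\phi)-J(\phi))$. Combining this with \eqref{0204+}, which gives $J\ge0$ and $J\le n(I-J)$ and hence $\tfrac{1}{n+1}I(\phi)\le I(\phi)-J(\phi)\le I(\phi)$, proves \eqref{0304} with $c_1=\tfrac{c_X}{n+1}$ and $c_2=C_X$.
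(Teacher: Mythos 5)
Your proposal is correct and follows essentially the same route as the paper: differentiate $I_X-J_X$ and $I-J$ along the linear path $\phi_s=s\phi$, obtain the weighted and unweighted Dirichlet-integral representations (the paper gets the cross-term cancellation by writing the derivative as $-s\int_M\phi\,\bigl(\triangle_{\omega_{\phi_s}}-\tfrac{X}{1-\theta_X(\omega_{\phi_s})}\bigr)\phi\,(1-\theta_X(\omega_{\phi_s}))\,\omega_{\phi_s}^n$ and integrating by parts, which is exactly the computation you carry out explicitly), compare pointwise via $c_X\le 1-\theta_X\le C_X$, and conclude with \eqref{0204+}. Your explicit constants $c_1=c_X/(n+1)$, $c_2=C_X$ are a valid instantiation of the paper's final step.
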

\begin{proof}
Take a path $\phi_s=s\phi$. Then
\begin{eqnarray*}
{\frac{d}{ds}}[I_{X}(\phi_s)-J_{X}(\phi_s)]&=&-s\int_M\phi\cdot\left(\bigtriangleup_{\omega_{\phi_s}}-{\frac{X}{1-\theta_X(\omega_{\phi_s})}}\right)\phi\cdot (1-\theta_X(\omega_{\phi_s}))\omega_{\phi_s}^n\\
&=&s\int_M|\partial\phi|^2_{\omega_{\phi_s}}(1-\theta_X(\omega_{\phi_s}))\omega_{\phi_s}^n,
\end{eqnarray*}
%where %$\bigtriangleup_s$ is the Laplacian of $\omega_{\phi_s}$ and
%$|\cdot|_s$ is the norm of vectors with respect to $\omega_{\phi_s}$.
Note that
\begin{eqnarray*}
{\frac{d}{ds}}[I(\phi_s)-J(\phi_s)]=s\int_M|\partial\phi|^2_{\omega_{\phi_s}}\omega_{\phi_s}^n.
\end{eqnarray*}
When $c_X>0$, it follows
\begin{eqnarray*}
0\leq c_X{\frac{d}{ds}}[I(\phi_s)-J(\phi_s)]\leq{\frac{d}{ds}}[I_{X}(\phi_s)-J_{X}(\phi_s)]\leq C_X{\frac{d}{ds}}[I(\phi_s)-J(\phi_s)].
\end{eqnarray*}
Thus the lemma follows from (\ref{0204+}).
\end{proof}

In view of \cite{CTZ, DR, Tian97, Tian, ZZ}, we have the following definition of properness:
\begin{definition}\label{prop def}
Suppose $H^c$ is a reductive subgroup  (which is the complexfication of a compact Lie group $H$) of $Aut_0(M)$ which contains $K_X$.
The modified Ding functional $\mathcal D_{X}(\cdot)$ is said to be  \emph{proper modulo $H^c$}
if there exists an increasing function $f(t)\geq-c$ for $t\in\mathbb R$ and some constant $c\geq0$ such that $\displaystyle\lim_{t\to+\infty}f(t)=+ \infty$ and
\begin{eqnarray*}
\mathcal D_{X}(\phi)\geq \inf_{\sigma\in H^c}f(I_{X}(\phi_\sigma)-J_{X}(\phi_\sigma)),
\end{eqnarray*}
where $\phi_\sigma$ is defined by
$\sigma^*(\omega_\phi)=\omega_0+\sqrt{-1}\partial\bar\partial\phi_\sigma$.
\end{definition}

For convenience, we write the modified Ding functional \eqref{0203} as $\mathcal D_{X}(\phi)=\mathcal N(\phi)+\mathcal D^0_X(\phi)$, where
\begin{eqnarray}
\mathcal N(\phi)&=& -\log\left({\frac1V}\int_Me^{h_0-\phi}\omega_0^n\right),\label{linear}\\
\mathcal D^0_X(\phi)&=&-{\frac1V}\int_0^1\int_M\dot\phi_s(1-\theta_X(\omega_{\phi_s}))\omega_{\phi_s}^n\wedge ds. \label{nonlinear}
\end{eqnarray}
It is known that   $\mathcal N$ is convex with respect to geodesics \cite{Bern}.
In the latter proof of Theorem \ref{inverse proper}, we need the convexity  of $\mathcal D^0_X(\cdot)$.

\begin{lem}\label{linear properties}
The functional $\mathcal D^0_{X}(\cdot)$ satisfies:
\begin{itemize}
\item[(1)] When $c_X>0$, $\mathcal D^0_{X}(\cdot)$ is monotonic, that is for any $\phi_0\leq\phi_1$, $\mathcal D^0_{X}(\phi_0)\geq\mathcal D^0_{X}(\phi_1)$;

\item[(2)] $\mathcal D^0_{X}(\cdot)$ is affine along any $C^{1,1}$-geodesic connecting two smooth potentials in $\mathcal H_X(\omega_0)$.
\end{itemize}
\end{lem}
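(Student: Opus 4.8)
The plan is to derive the first-variation formula for $\mathcal D^0_X$, from which (1) is immediate, and then to prove (2) by showing that the second variation of $\mathcal D^0_X$ along a geodesic vanishes identically.

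\emph{First variation and part (1).} Differentiating the defining path integral \eqref{nonlinear}, for any smooth path $\{\phi_s\}\subset\mathcal H_X(\omega_0)$ one has
\[
\frac{d}{ds}\mathcal D^0_X(\phi_s)=-\frac1V\int_M\dot\phi_s\,(1-\theta_X(\omega_{\phi_s}))\,\omega_{\phi_s}^n;
\]
this also records that the integrand one-form is closed, i.e.\ that $\mathcal D^0_X$ is path independent. For (1), given $\phi_0\le\phi_1$ I take the linear path $\phi_s=\phi_0+s(\phi_1-\phi_0)$, which stays in $\mathcal H_X(\omega_0)$ because $\omega_{\phi_s}=(1-s)\omega_{\phi_0}+s\omega_{\phi_1}>0$ and $K_X$-invariance is preserved. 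Then $\dot\phi_s=\phi_1-\phi_0\ge 0$, and since $c_X>0$ forces $1-\theta_X(\omega_{\phi_s})\ge c_X>0$, the integrand is nonnegative; hence $\frac{d}{ds}\mathcal D^0_X(\phi_s)\le 0$, giving $\mathcal D^0_X(\phi_1)\le\mathcal D^0_X(\phi_0)$.

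\emph{Part (2).} I first compute the second derivative along a \emph{smooth} geodesic. Write $u=\dot\phi_s$ and let all covariant derivatives, norms and the Laplacian $\triangle$ refer to $\omega_{\phi_s}$. From $\theta_X(\omega_{\phi_s})=\theta_X(\omega_0)+X(\phi_s)$ we get $\frac{d}{ds}(1-\theta_X(\omega_{\phi_s}))=-X(u)$, and $\frac{d}{ds}\omega_{\phi_s}^n=(\triangle u)\omega_{\phi_s}^n$. Differentiating the first-variation formula,
\[
-V\frac{d^2}{ds^2}\mathcal D^0_X(\phi_s)=\int_M\ddot\phi_s(1-\theta_X)\omega_{\phi_s}^n-\int_M u\,X(u)\,\omega_{\phi_s}^n+\int_M u(1-\theta_X)(\triangle u)\omega_{\phi_s}^n.
\]
Integrating the last term by parts gives $-\int_M(1-\theta_X)|\partial u|^2_{\omega_{\phi_s}}\omega_{\phi_s}^n-\int_M u\langle\partial u,\partial(1-\theta_X)\rangle\omega_{\phi_s}^n$. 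The crucial identity is $\langle\partial u,\partial\theta_X(\omega_{\phi_s})\rangle=X(u)$, which follows from the defining relation $i_X\omega_{\phi_s}=\sqrt{-1}\bar\partial\theta_X(\omega_{\phi_s})$ (so that $X^i=g^{i\bar j}[\theta_X(\omega_{\phi_s})]_{,\bar j}$) together with the reality of $\theta_X(\omega_{\phi_s})$, valid because $\omega_{\phi_s}$ is $K_X$-invariant. Hence $\langle\partial u,\partial(1-\theta_X)\rangle=-X(u)$, the two copies of $\int_M u\,X(u)\,\omega_{\phi_s}^n$ cancel, and
\[
-V\frac{d^2}{ds^2}\mathcal D^0_X(\phi_s)=\int_M\big(\ddot\phi_s-|\partial u|^2_{\omega_{\phi_s}}\big)(1-\theta_X(\omega_{\phi_s}))\,\omega_{\phi_s}^n.
\]
Along a geodesic $\ddot\phi_s-|\partial\dot\phi_s|^2_{\omega_{\phi_s}}=0$, so $\frac{d^2}{ds^2}\mathcal D^0_X(\phi_s)=0$ and $\mathcal D^0_X$ is affine.

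\emph{Main obstacle.} By Chen's theorem the geodesic joining two smooth potentials is in general only $C^{1,1}$, so the computation above is a priori formal. To make it rigorous I would run the same computation along the smooth $\varepsilon$-geodesics solving $(\ddot\phi_s-|\partial\dot\phi_s|^2_{\omega_{\phi_s}})\omega_{\phi_s}^n=\varepsilon\,\omega_0^n$; for these the displayed formula yields $\frac{d^2}{ds^2}\mathcal D^0_X(\phi_s)=-\frac{\varepsilon}{V}\int_M(1-\theta_X(\omega_{\phi_s}))\omega_0^n$, whose absolute value is at most $\varepsilon C_X$ uniformly in $s$, since $c_X\le 1-\theta_X\le C_X$. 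As $\varepsilon\to 0^+$ the $\varepsilon$-geodesics converge in $C^{1,1}$ (hence uniformly) to the $C^{1,1}$ geodesic, and $\mathcal D^0_X$ is continuous along this convergence because it is built from Monge--Amp\`ere measures of bounded potentials against the fixed smooth bounded weight $1-\theta_X$ (Bedford--Taylor theory). Thus $s\mapsto\mathcal D^0_X(\phi_s)$ is a uniform limit of functions whose second derivatives tend to $0$, and is therefore affine. The only delicate point is this passage to the limit; the algebraic heart, namely the cancellation forced by $\langle\partial u,\partial\theta_X\rangle=X(u)$, is precisely where the weight $1-\theta_X$ and the holomorphic field $X$ conspire.
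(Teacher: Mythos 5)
Your proof is correct and follows essentially the same route as the paper: the linear path argument for monotonicity is identical, and for the affine property the paper likewise computes the second variation along Chen's smooth $\epsilon$-geodesics, uses the same cancellation $\langle\partial u,\partial\theta_X(\omega_{\phi_s})\rangle=X(u)$ coming from $i_X\omega_{\phi_s}=\sqrt{-1}\bar\partial\theta_X(\omega_{\phi_s})$, and then lets $\epsilon\to0$. The only cosmetic differences are that the paper works with the complexified parameter $\tau$ on $M\times([0,1]\times S^1)$ rather than the real parameter $s$, and your limit step (the uniform bound $\epsilon C_X$ and the elementary fact that a uniform limit of functions with second derivatives $O(\epsilon)$ is affine) is, if anything, stated more carefully than the paper's weak-convergence assertion.
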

\begin{proof}
To see (1), by definition we have
\begin{eqnarray*}
\mathcal D^0_{X}(\phi_1)=\mathcal D^0_{X}(\phi_0)-{\frac1V}\int_0^1\int_M\dot\phi_s(1-\theta_X(\omega_{\phi_s}))\omega_{\phi_s}^n\wedge ds,
\end{eqnarray*}
where $\phi_s$ is any smooth path in $\mathcal H_X(\omega_0)$ joining $\phi_0$ and $\phi_1$. Take in particular $\phi_s=s(\phi_1-\phi_0)+\phi_0$ and note that $c_X>0$, we have
\begin{eqnarray*}
\mathcal D^0_{X}(\phi_1)=\mathcal D^0_{X}(\phi_0)-{\frac1V}\int_0^1\int_M(\phi_1-\phi_0)(1-\theta_X(\omega_{\phi_s}))\omega_{\phi_s}^n\wedge ds\leq\mathcal D^0_{X}(\phi_0).
\end{eqnarray*}

Next we prove (2).
Let $\{\phi_t\}$ be the $C^{1,1}$-geodesic connecting  $\phi_0,\phi_1\in\mathcal H_X(\omega_0)$.
By \cite{Chen}, $\{\phi_t\}$ can be approximated by a family of smooth $\epsilon$-geodesic $\{\phi_t^\epsilon| t\in\Omega\}$ in $\mathcal H_X(\omega_0)$ connecting $\phi_0$ and $\phi_1$, satisfying
\begin{eqnarray}\label{eps-sub-geod}
\left({\frac{\partial^2}{\partial \tau\partial \bar \tau}}\phi_t^\epsilon-|\bar\partial\phi_t^\epsilon|_{\omega_{\phi_t}}^2\right)(\omega_0+\sqrt{-1}\partial\bar\partial \phi_t^{\epsilon})^n=\epsilon\cdot \omega_0^n,
\end{eqnarray}
on $M\times \Omega$, where $\Omega:=[0,1]\times S^1\in \mathbb C$ and $t=Re(\tau)$.
For each $\epsilon$, we have
\begin{eqnarray*}
{\frac{\partial}{\partial \tau}}\mathcal D^0_{X}(\phi_t^\epsilon)=-{\frac1V}\int_M{\frac{\partial}{\partial \tau}}\phi_t^\epsilon(1-\theta_X(\omega_{\phi_t^\epsilon}))\omega_{\phi_t^\epsilon}^n.
\end{eqnarray*}
It follows
\begin{eqnarray}\label{0212}
{\frac{\partial^2}{\partial \tau\partial \bar \tau}}\mathcal D^0_{X}(\phi_t^\epsilon)&=&-{\frac1V}\int_M{\frac{\partial^2\phi_t^\epsilon}{\partial \tau\partial \bar \tau}}(1-\theta_X(\omega_{\phi_t^\epsilon}))\omega_{\phi_t^\epsilon}^n+{\frac1V}\int_M{\frac{\partial \phi_t^\epsilon}{\partial \tau}}{\frac{\partial \theta_X(\omega_{\phi_t^\epsilon})}{\partial \bar \tau}}\omega_{\phi_t^\epsilon}^n\notag\\
&&-{\frac{\sqrt{-1}}V}\int_M{\frac{\partial \phi_t^\epsilon}{\partial \tau}}(1-\theta_X(\omega_{\phi_t^\epsilon}))n\omega_{\phi_t^\epsilon}^{n-1}\wedge\partial\bar\partial{\frac{\partial \phi_t^\epsilon}{\partial \bar \tau}}.
\end{eqnarray}
Recall that $\theta_X(\omega_{\phi_t^\epsilon})=\theta_X(\omega_{0})+X(\phi_t^\epsilon)$, one gets
\begin{eqnarray*}
{\frac1V}\int_M{\frac{\partial \phi_t^\epsilon}{\partial \tau}}{\frac{\partial \theta_X(\omega_{\phi_t^\epsilon})}{\partial \bar \tau}}\omega_{\phi_t^\epsilon}^n={\frac1V}\int_M{\frac{\partial \phi_t^\epsilon}{\partial \tau}} X^i\left({\frac{\partial \phi_{t}^\epsilon}{\partial \bar \tau}}\right)_{,i}\omega_{\phi_t^\epsilon}^n.
\end{eqnarray*}
On the other hand, by integration by parts, we have
\begin{eqnarray*}
&&{\frac{\sqrt{-1}}V}\int_M{\frac{\partial \phi_t^\epsilon}{\partial \tau}}(1-\theta_X(\omega_{\phi_t^\epsilon}))n\omega_{\phi_t^\epsilon}^{n-1}\wedge\partial\bar\partial{\frac{\partial \phi_t^\epsilon}{\partial \bar \tau}}\\
&=&{\frac{\sqrt{-1}}V}\left[\int_M \bar\partial {\frac{\partial \phi_t^\epsilon}{\partial \tau}}(1-\theta_X(\omega_{\phi_t^\epsilon}))n\omega_{\phi_t^\epsilon}^{n-1}\wedge
\partial{\frac{\partial \phi_t^\epsilon}{\partial \bar \tau}}
-\int_M{\frac{\partial \phi_t^\epsilon}{\partial \tau}}\bar\partial\theta_X(\omega_{\phi_t^\epsilon})n\omega_{\phi_t^\epsilon}^{n-1}\wedge\partial{\frac{\partial \phi_t^\epsilon}{\partial \bar \tau}}\right]\\
&=&-{\frac{1}V}\int_M\left|{\frac{\partial \phi_t^\epsilon}{\partial \tau}}\bar\partial\phi_t^\epsilon\right|^2_{\omega_{\phi_t}}(1-\theta_X(\omega_{\phi_t^\epsilon}))\omega_{\phi_t^\epsilon}^n
+{\frac1V}\int_M{\frac{\partial\phi_t^\epsilon}{\partial \tau}} X^i\left({\frac{\partial \phi_{t}^\epsilon}{\partial \bar \tau}}\right)_{,i}\omega_{\phi_t^\epsilon}^n.
\end{eqnarray*}
Plugging these into (\ref{0212}), by (\ref{eps-sub-geod}), we have
\begin{eqnarray*}
{\frac{\partial^2}{\partial \tau\partial \bar \tau}}\mathcal D^0_{X}(\phi_t^\epsilon)=-\epsilon<0.
\end{eqnarray*}
Thus $\mathcal D^0_{X}(\cdot)$ is concave along $\phi_t^\epsilon$.
Sending $\epsilon\to0$,  $\sqrt{-1}\partial\bar\partial_\tau \mathcal D^0_{X}(\phi_t^\epsilon)$ converges weakly to $\sqrt{-1}\partial\bar\partial_\tau \mathcal D^0_{X}(\phi_t)$ as Monge-Amp\'ere measures.
 It follows $\sqrt{-1}\partial\bar\partial_\tau \mathcal D^0_{X}(\phi_t)=0$, thus $\mathcal D^0_{X}(\phi_t)$ is affine as desired.
\end{proof}

\subsection{Group compactifications}\label{grp cpt}
As an application of Theorem \ref{0301}, we will study the existence of Mabuchi metrics on group compactifications by testing properness of the modified Ding functional. The existence of K\"ahler-Einstein metrics on these manifolds has been solved by \cite{Del2} by using the continuity method, while the properness of K-energy was studied in \cite{LZZ}. We will prove Theorem \ref{bar condition thm} by ideas therein later. In this subsection, we recall some facts of group compatifications from \cite{Del2, LZZ}.

\subsubsection{Notations on Lie groups}
Choose a maximal compact subgroup $K$ of $G$ such that $G$ is its complexification. Let $T$ be a chosen maximal torus of $K$ and $T^c$ be its complexification, then $T^c$ is the maximal complex torus of $G$. Denote their Lie algebras by the corresponding fraktur lower case letters. Assume that $\Phi$ is the root system of $(G,T^\mathbb C)$ and $W$ is the Weyl group. Choose a set of positive roots $\Phi_+$. Set $\rho={\frac 1 2}\sum_{\alpha\in\Phi_+}\alpha$ and $\Xi$ be the relative interior of the cone generated by $\Phi_+$.
Let $J$ be the complex structure of $G$, then $$\mathfrak g=\mathfrak k\oplus J\mathfrak k.$$
Set $\mathfrak a=J\mathfrak t$, it can be decomposed as a toric part and a semisimple part:
$$\mathfrak a = \mathfrak a_t\oplus\mathfrak a_{ss},$$
where $\mathfrak a_t:= \mathfrak z(\mathfrak g)\cap\mathfrak a$ and $\mathfrak a_{ss}:= \mathfrak a\cap[\mathfrak g,\mathfrak g]$. We extend the Killing form on $\mathfrak a_{ss}$ to a scalar product $\langle\cdot,\cdot\rangle$ on $\mathfrak a$ such that $\mathfrak a_t$ is orthogonal to $\mathfrak a_{ss}$. %Identify $\mathfrak a$ and  its dual $\mathfrak a^*$ by $\langle\cdot,\cdot\rangle$.  Then $\mathfrak a^*$ also has an orthogonal decomposition
%$$\mathfrak a^*=\mathfrak a_t^*\oplus\mathfrak a_{ss}^*.$$
The positive roots $\Phi_+$ defines a positive Weyl chamber $\mathfrak a_+\subset\mathfrak a$, and a positive Weyl chamber ${\mathfrak a}^*_+$ on $\mathfrak a^*$,  where
$${\mathfrak a}^*_+:=\{y|~\alpha(y):=\langle\alpha,y\rangle>0,~\forall\alpha\in\Phi_+\},$$
it coincides with the dual of $\mathfrak a_+$ under $\langle\cdot,\cdot\rangle$.
For later use, we fix a Lebesgue measure $dy$ on $\mathfrak a^*$ which is normalized by the lattice of the characters of $T^c$.

\subsubsection{$K\times K$-invariant K\"ahler metrics}

Let $Z$ be the closure of $T^c$ in $M$. It is known that $(Z, L|_Z)$ is a polarized toric manifold with a $W$-action, and $L|_Z$ is a $W$-linearized ample toric line bundle on $Z$  \cite{AB1,AB2, AK, Del2}. Let $\omega_0\in 2\pi c_1(L)$ be  a $K\times K$-invariant K\"ahler form induced from $(M, L)$  and  $P$ be the polytope associated to $(Z, L|_Z)$, which is defined by the moment map associated to $\omega_0$.   Then $P$ is a $W$-invariant  delzent polytope in $\mathfrak a^*$.
By the $K\times K$-invariance,  for any
$$\phi\in\mathcal H_{K\times K}(\omega_0):=\{\phi\in C^{\infty}(M)|\omega_{\phi}>0,~\phi\text{ is $K\times K$-invariant}\},$$
the restriction of $\omega_\phi$ on $Z$ is a toric K\"ahler metric. It induces a smooth strictly convex function $\psi_\phi$ on ${\mathfrak a}$, which is $W$-invariant \cite{AL, Del2}.

By the $KAK$-decomposition (\cite{Kna}, Theorem 7.39), for any $g\in G$,
there are $k_1,\,k_2\in K$ and $x\in\mathfrak a$ such that $g=k_1\exp(x)k_2$. Here $x$ is uniquely determined up to a $W$-action. This means that $x$ is unique in $\bar{\mathfrak a}_+$.
Thus there is a bijection between smooth $K\times K$-invariant functions  $\Psi$   on $G$ and smooth $W$-invariant functions on $\mathfrak a$ which is given  by
$$\Psi( \exp(\cdot))=\psi(\cdot):~{\mathfrak a}\to\mathbb R.$$
Clearly when a $W$-invariant $\psi$ is given, $\Psi$ is well-defined. %We usually call $\psi$ \emph{the function associated to $\Psi$}.
In the following, we will not distinguish $\psi$ and $\Psi$. %It can be verified  that $\psi$ is a K\"ahler potential on $G$ such that $\omega=\sqrt{-1}\partial\bar\partial \psi$ on $G$  (cf.   Lemma \ref{Hessian} below). For more properties of $\psi$, we refer to \cite{Del1,Del2}.
The following $KAK$-integral formula can be found in \cite[Proposition 5.28]{Kna2} (see also \cite{HY}).

\begin{prop}\label{KAK int}
Let $dV_G$ be a Haar measure on $G$ and $dx$  the Lebesgue measure
on $\mathfrak{a}$.
Then there exists a constant $C_H>0$ such that for any
$K\times K$-invariant, $dV_G$-integrable function $\psi$ on $G$,
$$\int_G \psi(g)\,dV_G= C_H\int_{\mathfrak{a}_+}\psi(x)\mathbf{J}(x)\,dx,$$
where
$\mathbf J(x)=\prod_{\alpha \in \Phi_+} \sinh^2\alpha(x).$
\end{prop}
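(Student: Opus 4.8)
The plan is to recognize Proposition \ref{KAK int} as the Harish--Chandra integration formula attached to the Cartan ($KAK$) decomposition of the real Lie group underlying the complex reductive group $G$, and then to carry out the Jacobian computation in the special case where $G$ is complex, so that every restricted root has multiplicity exactly $2$. Viewing $G$ as a real group, the complex structure $J$ yields the Cartan decomposition $\mathfrak g=\mathfrak k\oplus\mathfrak p$ with $\mathfrak p=J\mathfrak k$, and $\mathfrak a=J\mathfrak t$ is maximal abelian in $\mathfrak p$. The $KAK$-decomposition already invoked above writes every $g\in G$ as $k_1\exp(x)k_2$ with $x\in\overline{\mathfrak a_+}$ unique, so a $K\times K$-invariant $\psi$ is a function of $x$ alone. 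First I would push the Haar measure $dV_G$ forward through the map $\Phi(k_1,x,k_2)=k_1\exp(x)k_2$, noting that $\Phi$ restricts to a submersion onto the regular set, whose complement (the non-regular elements together with the wall $\partial\mathfrak a_+$) has measure zero and may be discarded.

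The heart of the argument is the Jacobian of $\Phi$. Using left and right invariance of $dV_G$, I would compute the differential of $\Phi$ at $(e,x,e)$ in the left-trivialization, obtaining the map $\mathfrak k\oplus\mathfrak a\oplus\mathfrak k\to\mathfrak g$, $(V,H,W)\mapsto\mathrm{Ad}(\exp(-x))V+H+W$. Since $G$ is complex, the restricted-root decomposition coincides with the ordinary one: for each $\alpha\in\Phi_+$ the space $\mathfrak g_\alpha$ is two real-dimensional, the Cartan involution $\vartheta$ interchanges $\mathfrak g_{\pm\alpha}$, and on the eigenspaces $\mathfrak k_\alpha=(1+\vartheta)\mathfrak g_\alpha\subset\mathfrak k$ and $\mathfrak p_\alpha=(1-\vartheta)\mathfrak g_\alpha\subset\mathfrak p$ the operator $\mathrm{Ad}(\exp(-x))$ acts by the block $\left(\begin{smallmatrix}\cosh\alpha(x)&-\sinh\alpha(x)\\-\sinh\alpha(x)&\cosh\alpha(x)\end{smallmatrix}\right)$. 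The only coupling between the compact variable $V$ and the $\mathfrak p$-directions is through the off-diagonal $-\sinh\alpha(x)$ entries: projecting the differential onto $\mathfrak p=\mathfrak a\oplus\bigoplus_\alpha\mathfrak p_\alpha$ annihilates $W$, sends $H$ to $H$ identically, and sends the $\mathfrak k_\alpha$-part of $V$ into $\mathfrak p_\alpha$ with factor $-\sinh\alpha(x)$. After discarding the $\mathfrak m=\mathfrak z_{\mathfrak k}(\mathfrak a)$-redundancy in $V$ (which is exactly why $\Phi$ fails to be injective, its generic fibre being an $M$-orbit with $m\exp(x)=\exp(x)m$), the resulting block-permutation determinant equals $\prod_{\alpha\in\Phi_+}(-\sinh\alpha(x))^{m_\alpha}$ with $m_\alpha=\dim_{\mathbb R}\mathfrak g_\alpha=2$; up to sign this is precisely $\mathbf J(x)=\prod_{\alpha\in\Phi_+}\sinh^2\alpha(x)$.

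Finally I would collect constants: integrating out $k_1,k_2$ over $K$ contributes $\mathrm{Vol}(K)^2$, the $M$-fibre contributes $\mathrm{Vol}(M)$, and the normalizations relating $dV_G$ to $dk\,dx$ and then to the Lebesgue measure $dx$ all combine into a single positive constant $C_H$ independent of $\psi$; since $\psi(k_1\exp(x)k_2)=\psi(x)$ this gives $\int_G\psi\,dV_G=C_H\int_{\mathfrak a_+}\psi(x)\mathbf J(x)\,dx$. The main obstacle is not any single estimate but the careful bookkeeping in the Jacobian step: one must correctly account for the non-injectivity of $\Phi$ along the centralizer $M$ and for the measure-zero singular locus, and---crucially for the shape of $\mathbf J$---use that $G$ is complex so that each restricted root has multiplicity exactly $2$, which is what turns the general density $\prod_\alpha\sinh^{m_\alpha}\alpha(x)$ into $\prod_\alpha\sinh^2\alpha(x)$. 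For reductive rather than semisimple $G$ the central directions $\mathfrak a_t$ carry no roots and contribute only a flat factor already absorbed in $dx$, so the formula is unchanged.
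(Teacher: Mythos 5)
The paper does not actually prove this proposition: it is quoted verbatim from the literature, with the proof deferred to \cite[Proposition 5.28]{Kna2} (see also \cite{HY}). Your argument is, in substance, the standard proof of that cited result, correctly specialized to the case at hand, and I find no gap in it. The key computations are right: with $x,H\in\mathfrak a$ commuting, the left-trivialized differential of $\Phi(k_1,x,k_2)=k_1\exp(x)k_2$ is indeed $(V,H,W)\mapsto\mathrm{Ad}(\exp(-x))V+H+W$; on the pair $(\mathfrak k_\alpha,\mathfrak p_\alpha)$ built from a restricted root vector and its Cartan involute, $\mathrm{ad}(x)$ swaps the two lines with factor $\alpha(x)$, so $\mathrm{Ad}(\exp(-x))$ acts by the hyperbolic block you wrote, and projecting to $\mathfrak p$ produces the factor $\sinh\alpha(x)$ per real dimension; the generic fibre of $\Phi$ is an orbit of the centralizer $Z_K(\mathfrak a)$, which you correctly quotient out; and the fact that $G$ is the complexification of $K$ gives each restricted root multiplicity $m_\alpha=2$, turning the general density $\prod_\alpha\sinh^{m_\alpha}\alpha(x)$ into $\mathbf J(x)=\prod_{\alpha\in\Phi_+}\sinh^2\alpha(x)$ (and incidentally making the sign issue moot, since the exponents are even). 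Your closing remarks on the singular locus being null, on absorbing $\mathrm{Vol}(K)^2$ and the fibre volume into $C_H$, and on the reductive (non-semisimple) case---where the central directions $\mathfrak a_t$ carry no roots and only rescale $dx$---dispose of exactly the points a referee would raise. Two cosmetic cautions: your symbol $M$ for $Z_K(\mathfrak a)$ clashes with the paper's use of $M$ for the Fano manifold, so rename it if this were to be inserted; and note that the paper afterwards normalizes $C_H=1$, which your proof of course permits since $C_H$ is independent of $\psi$.
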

With out loss of generality, we can normalize $C_H=1$ for simplicity.

Next we recall a local holomorphic coordinates on $G$ used in  \cite{Del2}.   By the standard Cartan decomposition, we can decompose $\mathfrak g$ as
$$\mathfrak g=\left(\mathfrak t\oplus\mathfrak a\right)\oplus\left(\oplus_{\alpha\in\Phi}V_{\alpha}\right),$$
where $V_{\alpha}=\{X\in\mathfrak g|~ad_H(X)=\alpha(H)X,~\forall H\in\mathfrak t\oplus\mathfrak a\}$, the root space of complex dimension $1$ with respect to $\alpha$. By \cite{Hel}, one can choose $X_{\alpha}\in V_{\alpha}$ such that $X_{-\alpha}=-\iota(X_{\alpha})$ and
$[X_{\alpha},X_{-\alpha}]=\alpha^{\vee},$ where $\iota$ is the Cartan involution and $\alpha^{\vee}$ is the dual of $\alpha$ by the Killing form.
Let $E_{\alpha}:=X_{\alpha}-X_{-\alpha}$ and $E_{-\alpha}:=J(X_{\alpha}+X_{-\alpha})$. Denote by $\mathfrak k_{\alpha},\,\mathfrak k_{-\alpha}$ the real line spanned by $E_\alpha,\,E_{-\alpha}$, respectively.
Then we have the Cartan decomposition of $\mathfrak k$,
$$\mathfrak k=\mathfrak t\oplus\left(\oplus_{\alpha\in\Phi_+}\left(\mathfrak k_{\alpha}\oplus\mathfrak k_{-\alpha}\right)\right).$$
Denote by $r$ the dimension of $T$, choose a real basis $\{E^0_1,...,E^0_r\}$ of $\mathfrak t$.  Then $\{E^0_1,...,E^0_r\}$ together with $\{E_{\alpha},E_{-\alpha}\}_{\alpha\in\Phi_+}$ forms a real basis of $\mathfrak k$, which is indexed by $\{E_1,...,E_n\}$. $\{E_1,...,E_n\}$ can also be regarded as a complex basis of $\mathfrak g$. For any $g\in G$, we define local coordinates $\{z_{(g)}^i\}_{i=1,...,n}$ on a neighborhood of $g$ by
$$(z_{(g)}^i)\to\exp(z_{(g)}^iE_i)g.$$
It is easy to see  that $\theta^i|_g=dz_{(g)}^i|_g$,  where $\theta^i$ is the  dual of $E_i$,  which is a right-invariant holomorphic  $1$-form.    Thus
$\displaystyle{\wedge_{i=1}^n\left(dz_{(g)}^i\wedge d\bar{z}_{(g)}^i\right)}|_g$ is  also a  right-invariant  $(n,n)$-form,  which defines a Haar measure  $dV_G$.

The derivations of the $K\times K$-invariant function  $\psi$ in the above local coordinates was computed by  Delcroix as follows  \cite[Theorem 1.2]{Del2}.

\begin{lem}\label{Derivative}
Let $\psi$ be a $K\times K$ invariant function on $G$. %and $\psi$ the associated function
%on $\mathfrak{a}$.
Then for any $x\in \mathfrak{a}_+$,
\begin{eqnarray*}
E_i^0(\psi)|_{\exp(x)}&=&d\psi(\text{Im}(E_i^0))|_x,~1\leq i\leq r,\\
E_{\pm\alpha}(\psi)|_{\exp(x)}&=&0.
\end{eqnarray*}
\end{lem}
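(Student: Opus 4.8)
The plan is to evaluate each frame derivative directly in the defining chart $(z^i_{(g)})\mapsto\exp(z^i_{(g)}E_i)g$ at the base point $g=\exp(x)$, so that $E_i(\psi)$ is the holomorphic derivative $\partial\psi/\partial z^i$ there, and then to exploit the $K\times K$-invariance of $\psi$ through both the left- and the right-translation directions. For $Y\in\mathfrak g$ I set $F(Y)=\frac{d}{dt}|_{t=0}\psi(\exp(tY)\exp(x))$ and $G(Y)=\frac{d}{dt}|_{t=0}\psi(\exp(x)\exp(tY))$, so that at $\exp(x)$ one has $\partial\psi/\partial z^i=\tfrac12(F(E_i)-\sqrt{-1}\,F(JE_i))$. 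Because $\psi$ is $K\times K$-invariant and $\exp(t\mathfrak k)\subset K$, both $F$ and $G$ vanish on $\mathfrak k$; and conjugation yields the transition rules $F(Y)=G(\mathrm{Ad}(\exp(-x))Y)$ and $G(Y)=F(\mathrm{Ad}(\exp(x))Y)$, which will be the engine of the argument.

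For a torus index, $E_i^0\in\mathfrak t\subset\mathfrak k$ gives $F(E_i^0)=0$ by left-invariance, while its imaginary counterpart lies in $J\mathfrak t=\mathfrak a$; since $\mathfrak a$ is abelian, $\exp(tJE_i^0)\exp(x)=\exp(x+tJE_i^0)$ and hence $F(JE_i^0)=d\psi(JE_i^0)|_x$ is exactly the derivative of $\psi$ regarded as a function on $\mathfrak a$. Combining the two pieces, and matching the normalization of $\text{Im}(E_i^0)\in\mathfrak a$ with the holomorphic derivative, produces the first formula $E_i^0(\psi)|_{\exp(x)}=d\psi(\text{Im}(E_i^0))|_x$.

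The real content is the root case, where I must show both $F(E_{\pm\alpha})=0$ (immediate, as $E_{\pm\alpha}\in\mathfrak k$) and $F(JE_{\pm\alpha})=0$. Using that the $G$-action is holomorphic, so that $J$ commutes with $\mathrm{Ad}(\exp(\pm x))$, together with $\mathrm{ad}_xX_{\pm\alpha}=\pm\alpha(x)X_{\pm\alpha}$ for $x\in\mathfrak a$, I would rewrite $X_{\pm\alpha}$ in the real basis $\{E_{\pm\alpha},JE_{\pm\alpha}\}$ and compute, for instance,
\[
\mathrm{Ad}(\exp(-x))(JE_\alpha)=\cosh\alpha(x)\,JE_\alpha-\sinh\alpha(x)\,E_{-\alpha},
\]
with the analogous identities for $-\alpha$ and for $\exp(x)$. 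Feeding these into the transition rules and discarding the $\mathfrak k$-terms annihilated by $F$ and $G$ gives $F(JE_\alpha)=\cosh\alpha(x)\,G(JE_\alpha)$ and $G(JE_\alpha)=\cosh\alpha(x)\,F(JE_\alpha)$, hence $F(JE_\alpha)=\cosh^2\!\alpha(x)\,F(JE_\alpha)$. As $x\in\mathfrak a_+$ is regular we have $\alpha(x)\neq0$, so $\cosh^2\alpha(x)>1$ and therefore $F(JE_\alpha)=0$; the same computation disposes of $JE_{-\alpha}$. Thus $E_{\pm\alpha}(\psi)|_{\exp(x)}=0$.

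I expect this last step to be the main obstacle: the bookkeeping that lets left- and right-invariance be played against one another through $\mathrm{Ad}(\exp(\pm x))$, and getting the hyperbolic factors right from the conventions $X_{-\alpha}=-\iota(X_\alpha)$ and $[X_\alpha,X_{-\alpha}]=\alpha^\vee$. The decisive structural inputs are that $J$ commutes with the adjoint action and that the root eigenvalues $\pm\alpha(x)$ are real on $\mathfrak a$; the hypothesis $x\in\mathfrak a_+$ enters precisely to force $\cosh^2\alpha(x)\neq1$, i.e. to ensure that $\exp(x)$ is a regular point at which the $K\times K$-orbit already fills up all the non-$\mathfrak a$ directions. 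It then remains only to check the constant in the torus formula against the paper's normalization of $\text{Im}(\cdot)$.
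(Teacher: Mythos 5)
First, a point of reference: the paper offers no proof of this lemma at all --- it is quoted from Delcroix \cite[Theorem 1.2]{Del2} --- so your argument can only be measured against that source, and in substance it is a correct, self-contained reconstruction of the computation there. Your engine is sound: $F$ and $G$ are linear in $Y$ (they are $d\psi$ paired with right-, resp.\ left-invariant vector fields at $\exp(x)$), they annihilate $\mathfrak k$ by the two-sided $K$-invariance, and the transition rules $F(Y)=G(\mathrm{Ad}(\exp(-x))Y)$, $G(Y)=F(\mathrm{Ad}(\exp(x))Y)$ are correct. So are the hyperbolic identities: since $J$ commutes with $\mathrm{Ad}$, $\mathrm{Ad}(\exp(\mp x))X_{\pm\alpha}=e^{\mp\alpha(x)}X_{\pm\alpha}$, and $JE_{-\alpha}=-(X_\alpha+X_{-\alpha})$, one gets
\[
\mathrm{Ad}(\exp(\mp x))(JE_\alpha)=\cosh\alpha(x)\,JE_\alpha\mp\sinh\alpha(x)\,E_{-\alpha},\qquad
\mathrm{Ad}(\exp(\mp x))(JE_{-\alpha})=\cosh\alpha(x)\,JE_{-\alpha}\pm\sinh\alpha(x)\,E_{\alpha},
\]
whence $F(JE_{\pm\alpha})=\cosh^2\alpha(x)\,F(JE_{\pm\alpha})$, and $\alpha(x)\neq0$ on $\mathfrak a_+$ forces $F(JE_{\pm\alpha})=0$; together with $F(E_{\pm\alpha})=0$ this gives $E_{\pm\alpha}(\psi)|_{\exp(x)}=0$. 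An equivalent, more geometric packaging of the same step (and the one implicit in Delcroix) is that for regular $x$ the subspace $\mathfrak k+\mathrm{Ad}(\exp(x))\mathfrak k$ already contains every $JE_{\pm\alpha}$, i.e.\ the $K\times K$-orbit through $\exp(x)$ exhausts all directions transverse to $\mathfrak a$, so $d\psi$ kills them; your $\cosh^2$ bootstrap is exactly that statement in coordinates, and the regularity hypothesis enters precisely where you say it does.

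The one loose end is the toric constant, which you defer to ``matching normalizations''; since this constant is what gets used downstream (Lemma \ref{Futaki comput}, and the $\frac14\,\mathrm{Hess}_{\mathbb R}$ block of Lemma \ref{Hessian}), you should pin it down rather than leave it as a check. Your own formulas give
\[
E_i^0(\psi)|_{\exp(x)}=\tfrac12\left(F(E_i^0)-\sqrt{-1}\,F(JE_i^0)\right)=-\tfrac{\sqrt{-1}}{2}\,d(\psi\circ\exp)(JE_i^0)\big|_x,
\]
which is a purely \emph{imaginary} multiple of a real directional derivative, whereas $d\psi(\text{Im}(E_i^0))|_x$ is real for any reasonable real vector $\text{Im}(E_i^0)$ tangent to $\mathfrak a$. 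So the displayed identity in the lemma can only hold up to an explicit factor of $\sqrt{-1}$ (and a real constant such as $\tfrac12$, depending on how $\mathrm{Im}$ is normalized); under the usual convention that the $(1,0)$-field of a real field $Y$ is $\tfrac12(Y-\sqrt{-1}JY)$, the honest statement is $E_i^0(\psi)|_{\exp(x)}=\sqrt{-1}\,d\psi(\text{Im}(E_i^0))|_x$. This is a defect of the statement's conventions rather than of your argument, but it means the normalization check you postpone is not vacuous: recording the exact constant is the difference between a complete proof and a proof of the lemma ``up to a universal factor.''
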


\begin{lem}\label{Hessian}
Let $\psi$ be a $K\times K$ invariant function on $G$, %and $\psi$ the associated function
%on $\mathfrak{a}$. Let $\Phi_+=\{\alpha_{(1)},...,\alpha_{(\frac{n-r}{2})}\}$.
then  for any $x\in \mathfrak{a}_+$,
the complex Hessian matrix of $\psi$  in the  above coordinates is diagonal by blocks, and equals to
\begin{equation}\label{+21}
\mathrm{Hess}_{\mathbb{C}}(\psi)(\exp(x)) =
\begin{pmatrix}
\frac{1}{4}\mathrm{Hess}_{\mathbb{R}}(\psi)(x)& 0 &  & & 0 \\
 0 & M_{\alpha_{(1)}}(x) & & & 0 \\
 0 & 0 & \ddots & & \vdots \\
\vdots & \vdots & & \ddots & 0\\
 0 & 0 &  & & M_{\alpha_{(\frac{n-r}{2})}}(x)\\
\end{pmatrix},
\end{equation}
where $\Phi_+=\{\alpha_{(1)},...,\alpha_{(\frac{n-r}{2})}\}$ is the set of positive roots and
\[
M_{\alpha_{(i)}}(x) = \frac{1}{2}\langle\alpha_{(i)},\nabla \psi(x)\rangle
\begin{pmatrix}
\coth\alpha_{(i)}(x) & \sqrt{-1} \\
-\sqrt{-1} & \coth\alpha_{(i)}(x) \\
\end{pmatrix}.
\]
\end{lem}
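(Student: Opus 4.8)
The plan is to compute the entries of $\mathrm{Hess}_{\mathbb C}(\psi)$ directly in the holomorphic chart $\mu\colon(z^i)\mapsto\exp(z^iE_i)\exp(x)$. Since $\{E_i\}$ is simultaneously a real basis of $\mathfrak k$ and a complex basis of $\mathfrak g$, and $\exp$ is holomorphic on the complex group $G$, the map $\mu$ is a local biholomorphism and $\partial_{z^i}$ corresponds at the center $\exp(x)$ to the right-invariant $(1,0)$-field generated by $E_i$. Writing $z^i=u^i+\sqrt{-1}\,v^i$, the real directions $\partial_{u^i}$ and $\partial_{v^i}$ are generated by $E_i\in\mathfrak k$ and $JE_i\in J\mathfrak k$, so that $\tfrac{\partial^2\psi}{\partial z^i\partial\bar z^j}=\tfrac14\bigl(\partial_{u^i}\partial_{u^j}+\partial_{v^i}\partial_{v^j}+\sqrt{-1}(\partial_{u^i}\partial_{v^j}-\partial_{v^i}\partial_{u^j})\bigr)\psi$. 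The only subtlety is that the $E_i$ are not coordinate fields, so their brackets enter as first-order corrections; this is exactly where the non-abelian structure shows up.

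Next I would establish the block-diagonal form. Since $\Psi$ is left $K$-invariant and $E_{\pm\alpha}\in\mathfrak k$, the right-invariant fields $E_{\pm\alpha}$ annihilate $\Psi$ identically, and by Lemma \ref{Derivative} all root-direction first derivatives vanish at $\exp(x)$. Combining this with the root-space relations $[\mathfrak t\oplus\mathfrak a,V_\beta]\subseteq V_\beta$ and $[V_\alpha,V_\beta]\subseteq V_{\alpha+\beta}$, every mixed second derivative that couples the torus directions to a root, or two distinct root pairs, reduces to first derivatives of $\psi$ in directions on which they vanish. This forces the off-diagonal blocks to be zero and isolates the torus block together with the rank-one root blocks. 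For the torus block the Cartan subalgebra is abelian, so there are no bracket corrections: $\exp(z^iE_i^0)\exp(x)=\exp(u^iE_i^0)\exp(v^iJE_i^0+x)$ with $\exp(u^iE_i^0)\in T$ killed by left invariance, so $\Psi$ depends only on the imaginary parts and $\tfrac{\partial^2\Psi}{\partial z^i\partial\bar z^j}=\tfrac14\,\partial_{v^i}\partial_{v^j}\psi=\tfrac14\,\mathrm{Hess}_{\mathbb R}(\psi)(x)$.

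The heart of the proof is each root block. Fixing $\alpha\in\Phi_+$, I would restrict to the $SL(2,\mathbb C)$ generated by the $\mathfrak{sl}_2$-triple $\{X_\alpha,X_{-\alpha},\alpha^\vee\}$. In the standard representation $\exp(x)=\mathrm{diag}(e^{a},e^{-a})$ with $\alpha(x)=2a$, and $\exp(z^aE_\alpha+z^bE_{-\alpha})=\exp\bigl(\begin{smallmatrix}0&p\\ q&0\end{smallmatrix}\bigr)$ with $p=z^a+\sqrt{-1}\,z^b$, $q=-z^a+\sqrt{-1}\,z^b$, which is explicit through $\cosh$ and $\sinh$. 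Since $\psi$ of a group element depends only on its $KAK$-radial part, I would compute the eigenvalues of $MM^{\dagger}$ for $M=\exp(z^aE_\alpha+z^bE_{-\alpha})\exp(x)$ as a function of $(z^a,z^b)$ and extract the complex Hessian at $0$. The radial part is even with vanishing first derivative in these directions, so no second derivatives of $\psi$ survive and only $\nabla\psi$ appears; the $\cosh 2a$-geometry produces the diagonal factor $\coth\alpha(x)$, while the $\sqrt{-1}$-mixing of $z^a,z^b$ in $p,q$ — equivalently the bracket $[E_\alpha,E_{-\alpha}]=2\sqrt{-1}\,\alpha^{\vee}$ acting to first order — produces the off-diagonal $\pm\sqrt{-1}$, giving $M_\alpha(x)=\tfrac12\langle\alpha,\nabla\psi(x)\rangle\left(\begin{smallmatrix}\coth\alpha(x)&\sqrt{-1}\\ -\sqrt{-1}&\coth\alpha(x)\end{smallmatrix}\right)$.

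The main obstacle is precisely the off-diagonal entries of the root blocks. Everything else is either bracket-free (the torus block) or a vanishing-first-derivative argument (block-diagonality and the root-block diagonal), but the off-diagonal entries genuinely require tracking the non-commutativity, either through the Baker--Campbell--Hausdorff expansion of $\exp(z^aE_\alpha+z^bE_{-\alpha})$ or through the bracket correction in the Levi form, and then matching normalizations so that the coefficient of $z^a\bar z^b$ is exactly $\tfrac{\sqrt{-1}}{2}\langle\alpha,\nabla\psi(x)\rangle$. I expect the bookkeeping of constants relating $\psi'$ in the $\alpha^{\vee}$-direction to $\langle\alpha,\nabla\psi(x)\rangle$, and the factors of $2$ coming from $\alpha(x)=2a$ and from $[E_\alpha,E_{-\alpha}]=2\sqrt{-1}\,\alpha^{\vee}$, to be the most error-prone part of the computation.
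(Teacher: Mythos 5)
The paper contains no proof of this lemma to compare against: it is quoted directly from Delcroix \cite[Theorem 1.2]{Del2}, and your plan is essentially Delcroix's original argument — reduce each root pair to the rank-one subgroup generated by $\{X_\alpha,X_{-\alpha}\}$, read the $KAK$-radial part off the singular values, and use the vanishing of the root-direction first derivatives (Lemma \ref{Derivative}) so that only $\nabla\psi$ survives in the root blocks. Your computation does close up as claimed: with $p=z^a+\sqrt{-1}z^b$, $q=-z^a+\sqrt{-1}z^b$ one finds $\operatorname{tr}(MM^{\dagger})=2\cosh 2a+|p|^2e^{-2a}+|q|^2e^{2a}+2\operatorname{Re}(pq)\cosh 2a+O(|z|^3)$; since $pq=-(z^a)^2-(z^b)^2$ is of type $(2,0)$ it does not contribute to the complex Hessian, and solving $2\cosh 2a'=\operatorname{tr}(MM^{\dagger})$ gives the $(1,1)$-part of the radial displacement $\tfrac{\coth 2a}{2}\left(|z^a|^2+|z^b|^2\right)-\tfrac{\sqrt{-1}}{2}\left(z^b\bar z^a-z^a\bar z^b\right)$, which after pairing with $\nabla\psi$ is exactly $M_\alpha(x)$, using $\coth 2a=\coth\alpha(x)$. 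The normalization issue you flag as the main danger is in fact self-cancelling: taking the paper's convention $[X_\alpha,X_{-\alpha}]=t_\alpha$ (the Killing dual of $\alpha$) forces the rescaling $X_{\pm\alpha}=\sqrt{\langle\alpha,\alpha\rangle/2}\cdot e,f$, which multiplies the trace correction by $\langle\alpha,\alpha\rangle/2$, while expressing the displacement direction $h=\tfrac{2}{\langle\alpha,\alpha\rangle}t_\alpha$ contributes the reciprocal factor through $d\psi(h)=\tfrac{2}{\langle\alpha,\alpha\rangle}\langle\alpha,\nabla\psi\rangle$, leaving precisely $\tfrac12\langle\alpha,\nabla\psi(x)\rangle$ independently of normalization. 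Two points you should make explicit when writing this up: first, the rank-one reduction needs the splitting $x=x'+\tfrac{\alpha(x)}{2}h$ with $\alpha(x')=0$, so that $\exp(x')$ commutes with the $SL(2,\mathbb C)$ and the radial displacement stays along $h$; second, your block-diagonality argument for two distinct roots should note that $[V_{\pm\alpha},V_{\pm\beta}]$ has no component in $\mathfrak t\oplus\mathfrak a$ when $\alpha\neq\beta$, which is exactly why those cross terms reduce to first derivatives that vanish, whereas for $\alpha=\beta$ the bracket $[E_\alpha,E_{-\alpha}]=2\sqrt{-1}\,t_\alpha$ hits $\mathfrak t$ and produces the off-diagonal $\pm\sqrt{-1}$.
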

By (\ref{+21}) in   Lemma \ref{Hessian},
we see that a $\psi$ induced by some $\omega_\phi$ is convex on $\mathfrak{a}$.
The complex Monge-Amp\'ere measure is given by $\omega^n=(\sqrt{-1}\partial\bar{\partial}\psi_\phi)^n=\mathrm{MA}_{\mathbb C}(\psi_\phi)\,dV_G$, where
\begin{equation}\label{MA}
\mathrm{MA}_{\mathbb{C}}(\psi_\phi)(\exp(x))= \frac{1}{2^{r+n}}
\mathrm{MA}_{\mathbb{R}}(\psi_\phi)(x)\frac{1}{\mathbf J(x)}\prod_{\alpha \in \Phi_+}\langle\alpha,\nabla \psi_\phi(x)\rangle^2.
\end{equation}
\subsubsection{Legendre functions}
By the  convexity  of $\psi_\phi$ on $\mathfrak a$,   the gradient $\nabla \psi_\phi$  defines a diffeomorphism from $\mathfrak{a}$ to the interior of the dilated polytope $2P$\footnote{We remark that the moment map is given by ${\frac12}\nabla \psi_\phi$, whose image is $P$.}.
Let $P_+:=P\cap\bar{\mathfrak a}_+^*$.  Then by the $W$-invariance of $\psi_\phi$ and $P$, the restriction of $\nabla \psi_\phi$ on $\mathfrak{a}_+$ is a diffeomorphism from $\mathfrak{a}_+$ to the interior of $2P_+$. Let $u_G$ be the standard Guillemin function on $2P$ \cite{Gui}. Set
\begin{equation}\label{L}
\begin{aligned}
\mathcal C_{W}=\{u|~u \text{ is strictly convex, } u-u_G\in C^{\infty}(\overline{2P}) \text{ and } u \text{ is } W\text{-invariant}\}.
\end{aligned}\nonumber
\end{equation}
%and
%\begin{equation}
%\mathcal C_{+}=\{v_{|_{2P_+}}|~v\in \mathcal C_{\infty,W}\}.\nonumber
%\end{equation}
It is known that for any $K\times K$-invariant $\omega=\sqrt{-1}\partial\bar\partial\psi\in2\pi c_1(L)$, its Legendre function $u$ is given by
\begin{eqnarray}\label{Legendre}
u(y(x))=x^iy_{i}(x)-\psi(x), \   y_{i}(x)=\psi_{i}(x)=\frac{\partial \psi}{\partial y_i}
\end{eqnarray}
is a function in $\mathcal C_{W}$ (cf. \cite{Ab}).
By a similar argument as \cite{Gua} for toric manifolds, we have
%\begin{lem}
%Let $\phi_t$ be a geodesic in $\mathcal H_{K\times K}(\omega_0)$, then $\mathcal L_A(u_t)$ is an affine function of $t$.
%\end{lem}
%This lemma is a corollary of the following
\begin{lem}\label{0601}
For any $\phi_0, \phi_1\in \mathcal H_{K\times K}(\omega_0)$, there exists  a geodesic  $\{\phi_t\}_{t\in[0,1]}$ in $\mathcal H_{K\times K}(\omega_0)$ joining them, and the Legendre function of $\psi_\phi$ is given by
$u_{\phi_t}=(1-t)u_{\phi_0}+tu_{\phi_1}$.%the restriction of $\phi_t$ on $Z$ is a geodesic in $\mathcal H_{T}(\omega_0|_Z)$, i.e. it is a geodesic of toric metrics on $Z$.
\end{lem}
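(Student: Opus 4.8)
The plan is to construct the geodesic explicitly by prescribing its Legendre functions and then verifying that the resulting path solves the homogeneous complex Monge-Amp\`ere equation. Set $u_{\phi_t}:=(1-t)u_{\phi_0}+tu_{\phi_1}$ on $\overline{2P}$. Since $u_{\phi_0},u_{\phi_1}\in\mathcal C_W$, the difference $u_{\phi_t}-u_G=(1-t)(u_{\phi_0}-u_G)+t(u_{\phi_1}-u_G)$ is smooth on $\overline{2P}$ and $\mathrm{Hess}\,u_{\phi_t}=(1-t)\mathrm{Hess}\,u_{\phi_0}+t\,\mathrm{Hess}\,u_{\phi_1}>0$, so $u_{\phi_t}\in\mathcal C_W$ for every $t\in[0,1]$. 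Inverting the Legendre relation \eqref{Legendre} yields a $W$-invariant strictly convex $\psi_{\phi_t}$ on $\mathfrak a$, hence a $K\times K$-invariant potential $\phi_t\in\mathcal H_{K\times K}(\omega_0)$, smooth jointly in $(t,x)$. It then remains to check that $\{\phi_t\}$ is a geodesic; equivalently, extending it $S^1$-invariantly to $\Phi(g,\tau):=\phi_{\mathrm{Re}\,\tau}(g)$ on $M\times\Omega$, that $(\omega_0+\sqrt{-1}\partial\bar\partial\Phi)^{n+1}=0$, which is the $\epsilon=0$ case of \eqref{eps-sub-geod}.

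Next I would compute the $(n+1)\times(n+1)$ complex Hessian of $\Phi$ in the coordinates $(\tau,z^1_{(g)},\dots,z^n_{(g)})$ at a point $(\exp(x),\tau)$ with $x\in\mathfrak a_+$. The $z$-block is exactly the matrix \eqref{+21} of Lemma \ref{Hessian} applied to $\psi_{\phi_t}$. The crucial structural point is that the $\tau$-row and $\tau$-column couple only to the toric ($\mathfrak t$-)directions: since $\partial_\tau\Phi=\tfrac12\dot\psi_{\phi_t}$ is again $K\times K$-invariant in $g$, Lemma \ref{Derivative} gives $E_{\pm\alpha}(\dot\psi_{\phi_t})|_{\exp(x)}=0$, so all mixed entries of $\Phi$ between $\tau$ and the root directions vanish. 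Hence $\mathrm{Hess}_{\mathbb C}\Phi$ is block diagonal, with one $(r+1)\times(r+1)$ block mixing $\tau$ and the $\mathfrak t$-directions together with the $2\times 2$ blocks $M_{\alpha}(x)$ of Lemma \ref{Hessian}. Since $\det M_\alpha=\tfrac14\langle\alpha,\nabla\psi_{\phi_t}\rangle^2/\sinh^2\alpha(x)>0$ on the open orbit, the equation $(\omega_0+\sqrt{-1}\partial\bar\partial\Phi)^{n+1}=0$ is equivalent to the vanishing of the determinant of the $(r+1)$-block.

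That block equals, up to the constant $\tfrac14$, the matrix $\begin{pmatrix}\ddot\psi_{\phi_t}&(\partial_t\nabla\psi_{\phi_t})^{T}\\ \partial_t\nabla\psi_{\phi_t}&\mathrm{Hess}_{\mathbb R}\psi_{\phi_t}\end{pmatrix}$, whose Schur complement reduces the geodesic equation to the toric equation $\ddot\psi_{\phi_t}-(\partial_t\nabla\psi_{\phi_t})^T(\mathrm{Hess}_{\mathbb R}\psi_{\phi_t})^{-1}(\partial_t\nabla\psi_{\phi_t})=0$, exactly as for toric manifolds. I would then dualize through the Legendre transform: with $y=\nabla_x\psi_{\phi_t}$ one has $\mathrm{Hess}_y u_{\phi_t}=(\mathrm{Hess}_x\psi_{\phi_t})^{-1}$ and the envelope identity $\partial_t u_{\phi_t}(y)=-\dot\psi_{\phi_t}(x(t,y))$, so a direct computation gives $\partial_t^2 u_{\phi_t}=-[\ddot\psi_{\phi_t}-(\partial_t\nabla\psi_{\phi_t})^T(\mathrm{Hess}_{\mathbb R}\psi_{\phi_t})^{-1}(\partial_t\nabla\psi_{\phi_t})]$. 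Thus the toric geodesic equation holds if and only if $\partial_t^2 u_{\phi_t}=0$, i.e. $u_{\phi_t}$ is affine in $t$, which is precisely our choice. This confirms that $\{\phi_t\}$ is the desired geodesic.

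The main obstacle is the Hessian bookkeeping in the second step: one must justify rigorously that the only nonzero couplings of the $\tau$-direction are with the toric block, so that the $2\times 2$ root blocks factor out unchanged, while tracking the normalizations $\tfrac14$, $\tfrac{1}{2^{r+n}}$ and $\mathbf J(x)$ appearing in \eqref{+21} and \eqref{MA}. The Legendre-duality step is the standard toric computation (cf. \cite{Gua}), and the remaining analytic point to confirm is that the affine family $u_{\phi_t}$ stays in $\mathcal C_W$ for all $t$, so that $\phi_t$ is a smooth genuine geodesic rather than merely a $C^{1,1}$ one.
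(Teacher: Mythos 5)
Your proposal is correct and is exactly the argument the paper has in mind: the paper gives no proof of Lemma \ref{0601}, merely citing Guan's toric computation, and your write-up is precisely that computation (linearity of the geodesic equation under the Legendre transform) adapted to the group setting via Delcroix's formulas in Lemmas \ref{Derivative} and \ref{Hessian}. The only point you assert rather than prove---that every $u\in\mathcal C_W$ arises from a potential in $\mathcal H_{K\times K}(\omega_0)$, i.e.\ the Abreu--Guillemin correspondence for group compactifications---is likewise taken for granted by the paper, so your proof matches its intended route.
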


\section{Proof of the properness theorem}

Theorem \ref{0301} will be proved by steps as for K\"ahler-Ricci solitons \cite{CTZ, TianZhu Acta}.
We always assume $c_X>0$ in this section.

First, we have
\begin{lem}\label{I bound}
Let $\phi_t$ be a solution of (\ref{0302}) at $t$, if $I_X(\phi_t)$ is uniformly bounded, then there is a uniform constant $C$ such that
$$|\phi_t|\leq C,~\forall t\in[0,1].$$
\end{lem}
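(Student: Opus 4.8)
The plan is to follow the scheme used for K\"ahler--Ricci solitons and reduce the $C^0$ bound to two ingredients: a bound on the oscillation $\mathrm{osc}\,\phi_t:=\sup_M\phi_t-\inf_M\phi_t$, and a normalization that pins down its mean. First I would record the normalization. Integrating (\ref{0302}) over $M$ and using the invariance of the normalized Hamiltonian, $\int_M\theta_X(\omega_{\phi_t})\omega_{\phi_t}^n=0$, one gets $\int_M(1-\theta_X(\omega_{\phi_t}))\omega_{\phi_t}^n=V$, hence $\int_M e^{h_0-t\phi_t}\omega_0^n=V$. Throughout I may restrict to $t\in[\epsilon_0,1]$, since on $[0,\epsilon_0]$ solutions are already known to exist (Theorem \ref{0501}) and form a bounded family.

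The heart of the argument is to estimate the oscillation by $I_X$ using the weighted Laplacian $L^0_\omega:=\Delta_\omega-\tfrac{X}{1-\theta_X(\omega)}$, which by Remark \ref{rmk 2.3} is self-adjoint with respect to $d\mu_\omega:=(1-\theta_X(\omega))\omega^n$ and satisfies $L^0_\omega 1=0$ (so $\int_M L^0_\omega u\,d\mu_\omega=0$). I would combine two Green's-function estimates. For the \emph{fixed} background, $L^0_{\omega_0}\phi_t=\mathrm{tr}_{\omega_0}\omega_{\phi_t}-n-\tfrac{X(\phi_t)}{1-\theta_X(\omega_0)}>-C$, where $|X(\phi_t)|=|\theta_X(\omega_{\phi_t})-\theta_X(\omega_0)|\le C$ because $c_X\le 1-\theta_X\le C_X$; the Green's function of the fixed operator then gives $\sup_M\phi_t\le \tfrac1V\int_M\phi_t\,d\mu_{\omega_0}+C$. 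Symmetrically, for the \emph{evolving} data $L^0_{\omega_{\phi_t}}\phi_t=n-\mathrm{tr}_{\omega_{\phi_t}}\omega_0-\tfrac{X(\phi_t)}{1-\theta_X(\omega_{\phi_t})}<C$, so a Green's-function \emph{lower} bound would yield $\inf_M\phi_t\ge \tfrac1V\int_M\phi_t\,d\mu_{\omega_{\phi_t}}-C$. Subtracting and recognizing the right-hand side as the modified functional,
\[
\mathrm{osc}\,\phi_t\le \tfrac1V\Big(\textstyle\int_M\phi_t\,d\mu_{\omega_0}-\int_M\phi_t\,d\mu_{\omega_{\phi_t}}\Big)+C=\tfrac1V I_X(\phi_t)+C\le C',
\]
using the hypothesis that $I_X(\phi_t)$ is uniformly bounded.

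Once $\mathrm{osc}\,\phi_t\le C'$, I pin the constant using the normalization. Writing $m_t:=\inf_M\phi_t$, so $m_t\le\phi_t\le m_t+C'$, and inserting into $\int_M e^{h_0-t\phi_t}\omega_0^n=V$ together with $\int_M e^{h_0}\omega_0^n=V$ gives $e^{-t(m_t+C')}\le 1\le e^{-tm_t}$, whence $-C'\le m_t\le 0$ for $t\ge\epsilon_0$. Combined with the oscillation bound this yields $\|\phi_t\|_{C^0}\le C$.

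The hard part is the evolving Green's-function lower bound, i.e.\ obtaining a uniform-in-$t$ lower bound $G_t\ge -A$ for the Green's function of $L^0_{\omega_{\phi_t}}$ with respect to $d\mu_{\omega_{\phi_t}}$; this requires uniform geometric control (a weighted Sobolev/diameter/non-collapsing estimate) for the family $(M,\omega_{\phi_t},d\mu_{\omega_{\phi_t}})$. The structural input that makes this possible comes from applying $\sqrt{-1}\partial\bar\partial\log$ to (\ref{0302}): $\mathrm{Ric}(\omega_{\phi_t})-\sqrt{-1}\partial\bar\partial\log(1-\theta_X(\omega_{\phi_t}))=(1-t)\omega_0+t\omega_{\phi_t}\ge t\,\omega_{\phi_t}$, so the Bakry--\'Emery--Ricci curvature of the weighted space is bounded below by $t>0$ times the metric, while the weight $\log(1-\theta_X(\omega_{\phi_t}))$ is uniformly bounded precisely because $c_X>0$. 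For $t\ge\epsilon_0$ this gives uniform Myers-type diameter and volume bounds, hence the uniform weighted Sobolev inequality and the Green's-function lower bound needed above (equivalently, a direct Moser iteration against $d\mu_{\omega_{\phi_t}}$, fed by the weighted Poincar\'e inequality (\ref{Poincare})). Securing these constants uniformly in $t$ is the only delicate point; the remaining manipulations with the functionals, via Lemma \ref{0303} and (\ref{0204}), are routine bookkeeping.
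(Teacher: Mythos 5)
Your skeleton --- bounding $\mathrm{osc}\,\phi_t$ by ${\frac1V}I_X(\phi_t)$ via Green's functions of the weighted Laplacian for both $\omega_0$ and $\omega_{\phi_t}$, then pinning the mean with $\int_Me^{h_0-t\phi_t}\omega_0^n=V$ --- is the classical Bando--Mabuchi/Tian route, and your fixed-background half and your normalization step are fine. But the step you yourself flag as ``the only delicate point'' is a genuine gap, and the Bakry--\'Emery justification you sketch for it fails. Applying $\sqrt{-1}\partial\bar\partial\log$ to (\ref{0302}) bounds only the complex $(1,1)$ part, $Ric(\omega_{\phi_t})-\sqrt{-1}\partial\bar\partial\log(1-\theta_X(\omega_{\phi_t}))\geq t\,\omega_{\phi_t}$, whereas weighted Myers/volume/Sobolev comparison requires a lower bound on the full real Bakry--\'Emery tensor $Ric+\mathrm{Hess}\,f$ with $f=-\log(1-\theta_X(\omega_{\phi_t}))$. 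Since $X$ is holomorphic, the $(2,0)+(0,2)$ parts of $\mathrm{Hess}\,\theta_X$ vanish, but those of $\mathrm{Hess}\,f$ equal $f_{,j}f_{,k}=\theta_{X,j}\theta_{X,k}/(1-\theta_X)^2$, so on real tangent vectors one only gets a bound of the shape $Ric+\mathrm{Hess}\,f\geq\bigl(t-2|\partial f|^2_{\omega_{\phi_t}}\bigr)g_{\phi_t}$, and $|\partial f|^2_{\omega_{\phi_t}}=|X|^2_{\omega_{\phi_t}}/(1-\theta_X)^2$ is precisely the kind of quantity you have no a priori control over, since it involves the unknown metric. Likewise $Ric(\omega_{\phi_t})$ itself is not uniformly bounded below: it contains $\sqrt{-1}\partial\bar\partial\log(1-\theta_X(\omega_{\phi_t}))$, i.e.\ two derivatives of $\phi_t$. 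So neither an unweighted nor a weighted Myers/Sobolev estimate is available at this stage; this is exactly the obstruction that led Tian--Zhu to abandon the Green's-function scheme for K\"ahler--Ricci solitons, the situation this lemma imitates. Your parenthetical fallback (``a direct Moser iteration fed by (\ref{Poincare})'') does not rescue this: a Moser iteration up to a sup bound needs a uniform Sobolev inequality, which is the missing ingredient; the weighted Poincar\'e inequality alone only controls the growth of $L^p$ norms.

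The paper's proof takes that second, purely integral route and avoids all geometric control of $\omega_{\phi_t}$. From the equation and the $I_X$ bound it extracts $t\sup_M\phi_t\leq C$ (using only the \emph{fixed} Green's function of $\omega_0$) and $-t\int_M\hat\phi_t(1-\theta_X(\omega_{\phi_t}))\omega^n_{\phi_t}\leq C$ for $\hat\phi_t:=\phi_t-C/t\leq-1$; it then iterates the weighted Poincar\'e inequality (\ref{Poincare}) of Lemma \ref{0302+} to obtain $\int_M(-\hat\phi_t)^{p+1}(1-\theta_X(\omega_{\phi_t}))\omega^n_{\phi_t}\leq C^{p+1}(p+1)!/t^{p+1}$, sums the series into the exponential bound $\int_Me^{-t(1+\epsilon)\phi_t}\omega_0^n\leq C$ --- a uniform $L^{1+\epsilon}$ bound on the Monge--Amp\`ere density --- and concludes with Ko{\l}odziej's $L^\infty$ estimate. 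Note that this handles all $t\in[0,1]$ uniformly (the $t^{-p}$ blow-up is absorbed by the factor $(t\epsilon)^p/p!$ in the exponential series), whereas your appeal to Theorem \ref{0501} on $[0,\epsilon_0]$ gives existence, not a uniform bound for arbitrary solutions there. To repair your write-up you would either have to prove the uniform weighted Sobolev/Green's estimate for the family $(M,\omega_{\phi_t})$ --- a substantial result nowhere in the paper --- or switch to the iteration-plus-Ko{\l}odziej argument.
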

\begin{proof}
This estimate was essentially obtained in \cite{Mab2}. Here we will give a different proof following the arguments of \cite{TianZhu Acta}. In view of Ko{\l}odziej's $L^\infty$-estimate \cite{Kolodziej} for complex Monge-Amp\`ere equation, it suffices to obtain the $L^p$-estimate of $e^{-t\phi_t}$ for some $p>1$.

By the assumption,
$0\leq I_X(\phi_t)\leq C_1$ for some uniform $C_1$.
By (\ref{0302}), we have
\begin{eqnarray*}
\int_Me^{h_0-t\phi_t}\omega_0^n=\int_M(1-\theta_X(\omega_{\phi_t}))\omega_{\phi_t}^n=\int_Me^{h_0}\omega_0^n,
\end{eqnarray*}
thus
\begin{eqnarray*}
\inf_M\phi_t\leq0\leq\sup_M\phi_t.
\end{eqnarray*}
While by (\ref{0302}),
\begin{eqnarray*}
-t\int_M\phi_t\omega_{\phi_t}^n=-t\int_M\phi_t\frac{e^{h_0-t\phi_t}}{1-\theta_X(\omega_{\phi_t})}\omega_{0}^n\geq -C_2t\int_{\{\phi_t\geq0\}}\phi_te^{-t\phi_t}\omega_{0}^n\geq -C_3.
\end{eqnarray*}
Thus
\begin{eqnarray}\label{0207}
t\int_M\phi_t\omega_{0}^n\leq C_4.
\end{eqnarray}
Let $\Gamma(\cdot,\cdot)$ be the Green function of $\omega_0$. Then by $\bigtriangleup_{\omega_0}\phi_t>-n$,  $\Gamma+C_{\Gamma}\geq0$ for some $C_\Gamma>0$.
By (\ref{0207}) and Green's formula, we have
\begin{eqnarray}\label{0208}
t\sup_M\phi_t\leq{\frac  t V}\int_M\phi_t\omega_0^n-{\frac tV}\min_{M}\left(\int_M(\Gamma(x,\cdot)+C_\Gamma)\bigtriangleup_{\omega_0}\phi_t\omega_0^n\right)\leq C_5.
\end{eqnarray}
By the boundness of $I_X(\phi_t)$, we have
\begin{eqnarray}
-{\frac1V}\int_M\phi_t\omega_{\phi_t}^n\leq C_1-{\frac1V}\int_M\phi_t\omega_0^n\leq C_6.
\end{eqnarray}
Moreover,
\begin{eqnarray}\label{0209+}
-t\int_{\{\phi\leq0\}}\phi_t\omega_{\phi_t}^n&=& -t\int_M\phi_t\omega_{\phi_t}^n+t\int_{\{\phi_t\geq0\}}\phi_t\omega_{\phi_t}^n\notag\\
&\leq&tVC_6+t\int_{\{\phi_t\geq0\}}\phi_t\frac{e^{h_0-t\phi_t}}{1-\theta_X(\omega_{\phi_t})}\omega_{0}^n\leq C_7.
\end{eqnarray}
By (\ref{0208}), there is a uniform $C>0$ such that
%\begin{eqnarray}\label{0210}
$\hat\phi_t:=\phi_t-{\frac Ct}\leq-1$.
%\end{eqnarray}
By (\ref{0209+}), it follows
\begin{eqnarray*}
-t\int_M\phi_t(1-\theta_X(\omega_{\phi_t}))\omega_{\phi_t}^n
\leq -t\int_{\{\phi_t\leq0\}}\phi_t(1-\theta_X(\omega_{\phi_t}))\omega_{\phi_t}^n\leq C_XC_4,
\end{eqnarray*}
and consequently,
\begin{eqnarray}\label{0211}
-t\int_M\hat\phi_t(1-\theta_X(\omega_{\phi_t}))\omega_{\phi_t}^n\leq C_8.
\end{eqnarray}

On the other hand,
\begin{eqnarray*}
\int_M\left|\bar\partial(-\hat\phi_t)^{\frac{p+1}2}\right|_{\omega_{\phi_t}}^2\omega_{\phi_t}^n&=&\frac{n(p+1)^2}{4p}\int_M(-\hat\phi_t)^p(\omega_{\phi_t}^n-\omega_{\phi_t}^{n-1}\wedge\omega_0)\notag\\
&\leq&\frac{n(p+1)^2}{4p}\int_M(-\hat\phi_t)^p\omega_{\phi_t}^n.
\end{eqnarray*}
Recall that $0<c_X<1-\theta_X(\omega_{\phi_t})<C_X$.
Combining the above inequality with (\ref{Poincare}),
\begin{eqnarray*}
&&\int_M(-\hat\phi_t)^{p+1}(1-\theta_X(\omega_{\phi_t}))\omega^n_{\phi_t}\notag\\
&\leq&\frac{Cp}{t}\int_M(-\hat\phi_t)^{p}(1-\theta_X(\omega_{\phi_t}))\omega^n_{\phi_t}+{\frac1V}\left(\int_M(-\hat\phi_t)^{\frac{p+1}{2}}(1-\theta_X(\omega_{\phi_t}))\omega^n_{\phi_t}\right)^2\notag\\
&\leq&\frac{Cp}{t}\int_M(-\hat\phi_t)^{p}(1-\theta_X(\omega_{\phi_t}))\omega^n_{\phi_t}\notag\\
&&+{\frac1V}\left(\int_M(-\hat\phi_t)^{p}(1-\theta_X(\omega_{\phi_t}))\omega^n_{\phi_t}\right)\left(\int_M(-\hat\phi_t)(1-\theta_X(\omega_{\phi_t}))\omega^n_{\phi_t}\right)\\
&\leq&\frac{{C'}p}{t}\int_M(-\hat\phi_t)^{p}(1-\theta_X(\omega_{\phi_t}))\omega^n_{\phi_t},
\end{eqnarray*}
where we used (\ref{0211}) in the last line.
By iteration and using (\ref{0211}), we have
\begin{eqnarray*}
\int_M(-\hat\phi_t)^{p+1}(1-\theta_X(\omega_{\phi_t}))\omega^n_{\phi_t}\leq{\frac{C'^p(p+1)!}{t^p}}\int_M(-\hat\phi_t)(1-\theta_X(\omega_{\phi_t}))\omega^n_{\phi_t}\leq{\frac{C^{p+1}(p+1)!}{t^{p+1}}}.
\end{eqnarray*}
Thus for $0<\epsilon<1/c$,
\begin{eqnarray*}
\int_Me^{-t\epsilon\hat\phi_t}(1-\theta_X(\omega_{\phi_t}))\omega^n_{\phi_t}=\sum_{p=0}^{+\infty}{\frac{(t\epsilon)^p}{p!}}\int_M(-\hat\phi_t)^{p}(1-\theta_X(\omega_{\phi_t}))\omega^n_{\phi_t}
\leq{\frac1{1-c\epsilon}}.
\end{eqnarray*}
It follows
\begin{eqnarray*}
\int_Me^{-t(1+\epsilon)\phi_t}\omega^n_{0}&=&\int_Me^{-t(1+\epsilon)\phi_t}e^{-h_0-t\phi_t}(1-\theta_X(\omega_{\phi_t}))\omega^n_{\phi_t}\notag\\
&\leq&C_9\int_Me^{-t\epsilon\hat\phi_t}(1-\theta_X(\omega_{\phi_t}))\omega^n_{\phi_t}\leq C.
\end{eqnarray*}
Then the lemma then follows from Ko{\l}odziej's result.
\end{proof}

The following lemma was proved in \cite{Li}.

\begin{lem}\label{0305}
Fix $\epsilon_0\in (0,1)$. Then the modified Ding functional $\mathcal D_{X}(\phi_t)$ uniformly bounded from above for $t>\epsilon_0$.
\end{lem}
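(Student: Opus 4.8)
The plan is to prove the sharper statement that $\mathcal{D}_{X}(\phi_t)\le 0$ for \emph{every} $t\in(0,1)$, which in particular gives the uniform upper bound for $t>\epsilon_0$. I would split $\mathcal{D}_{X}=\mathcal{D}^0_{X}+\mathcal N$ as in \eqref{linear}--\eqref{nonlinear} and bound the two pieces separately along the continuity path \eqref{0302}. Throughout I will use the basic identity
\[
\int_M e^{h_0-t\phi_t}\omega_0^n=\int_M(1-\theta_X(\omega_{\phi_t}))\omega_{\phi_t}^n=V,
\]
which follows from \eqref{0302} and $\int_M\theta_X(\omega_{\phi_t})\omega_{\phi_t}^n=0$, exactly as already used in the proof of Lemma \ref{I bound}.

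To control the twisted part, I would introduce the auxiliary $t$-Ding functional
\[
\mathcal{D}_{X,t}(\phi):=\mathcal{D}^0_{X}(\phi)-\frac1t\log\Big(\frac1V\int_M e^{h_0-t\phi}\omega_0^n\Big).
\]
A direct first–variation computation shows that its Euler--Lagrange equation is precisely \eqref{0302}, so, using $\int_M e^{h_0-t\phi_t}\omega_0^n=V$, the solution $\phi_t$ is a critical point of $\mathcal{D}_{X,t}$. The crucial point is that $\mathcal{D}_{X,t}$ is convex along $C^{1,1}$-geodesics: by Lemma \ref{linear properties}(2) the term $\mathcal{D}^0_{X}$ is affine, while $\phi\mapsto-\frac1t\log\frac1V\int_M e^{h_0-t\phi}\omega_0^n$ is convex because, along a geodesic $\phi_\tau$, the scaled path obeys $\pi^*\omega_0+\sqrt{-1}\partial\bar\partial(t\phi_\tau)=(1-t)\pi^*\omega_0+t\,(\pi^*\omega_0+\sqrt{-1}\partial\bar\partial\phi_\tau)\ge 0$, i.e.\ $t\phi_\tau$ is a subgeodesic, so Berndtsson's subharmonicity theorem \cite{Bern} applies to the fixed measure $e^{h_0}\omega_0^n$. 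Hence $\phi_t$ is the global minimizer of $\mathcal{D}_{X,t}$, and since the logarithmic term vanishes at $\phi_t$ and at $0$,
\[
\mathcal{D}^0_{X}(\phi_t)=\mathcal{D}_{X,t}(\phi_t)\le\mathcal{D}_{X,t}(0)=0.
\]

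For the linear part I would argue by elementary convexity. Set $G(s):=\log\int_M e^{h_0-s\phi_t}\omega_0^n$; this is convex in $s$ by Hölder's inequality (equivalently $G''\ge0$ by Cauchy--Schwarz). The normalization $\int_M e^{h_0}\omega_0^n=V$ gives $G(0)=\log V$, and the identity above gives $G(t)=\log V$. Since $G$ is convex with $G(0)=G(t)$, for $t<1$ one gets $G(1)\ge\log V$, i.e.\ $\int_M e^{h_0-\phi_t}\omega_0^n\ge V$, so $\mathcal N(\phi_t)\le 0$. Combining, $\mathcal{D}_{X}(\phi_t)=\mathcal{D}^0_{X}(\phi_t)+\mathcal N(\phi_t)\le 0$ for all $t\in(0,1)$, which covers $t>\epsilon_0$.

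The step I expect to be the main obstacle is establishing convexity of $\mathcal{D}_{X,t}$ and turning "critical point" into "global minimizer": one must verify that $0$ and $\phi_t$ can be joined by a $C^{1,1}$-geodesic in $\mathcal H_{X}(\omega_0)$ (Chen \cite{Chen}) along which $\mathcal{D}^0_{X}$ is affine (Lemma \ref{linear properties}(2)), and that the scaled log term is convex there, which is where Berndtsson's theorem \cite{Bern} enters through the subgeodesic observation. Once this convexity is in place, the two remaining estimates for $\mathcal{D}^0_{X}(\phi_t)$ and $\mathcal N(\phi_t)$ are entirely elementary.
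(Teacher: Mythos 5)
Your proposal takes a genuinely different route from the paper: the paper does not prove Lemma \ref{0305} itself, but quotes it from \cite{Li}, where the bound comes out of Mabuchi's multiplier-Hermitian framework via estimates along the continuity path (which is also why the statement there is phrased only for $t>\epsilon_0$). Your argument is instead variational and self-contained: $\phi_t$ is a critical point of the auxiliary functional $\mathcal D_{X,t}$, which is convex along geodesics because $\mathcal D^0_X$ is affine (Lemma \ref{linear properties}(2)) and the scaled path $t\phi_\tau$ is a subgeodesic, so Berndtsson's theorem \cite{Bern} applies to $-\frac1t\log\frac1V\int_M e^{h_0-t\phi_\tau}\omega_0^n$; combined with the elementary convexity of $G(s)=\log\int_M e^{h_0-s\phi_t}\omega_0^n$ and the normalizations $G(0)=G(t)=\log V$ (which follow from \eqref{0302} and $\int_M\theta_X(\omega_{\phi_t})\omega_{\phi_t}^n=0$, exactly as in the proof of Lemma \ref{I bound}), this yields the sharper, uniform bound $\mathcal D_X(\phi_t)\le 0$ for all $t\in(0,1)$, independent of $\epsilon_0$. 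All the identities you invoke check out, and what your approach buys is a cleaner and stronger statement with no black-box reference.

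The step you flag is indeed the only real issue: ``critical point $\Rightarrow$ global minimizer'' cannot be read off directly from convexity along a $C^{1,1}$ geodesic, because criticality of $\phi_t$ is known only against smooth variations, whereas along the geodesic you only control a one-sided endpoint derivative, and identifying it with the first-variation formula requires justification (the geodesic is not a smooth path in $\mathcal H_X(\omega_0)$). The clean fix uses machinery already in the paper's proof of Lemma \ref{linear properties}: run your comparison along the smooth $\epsilon$-geodesic $\{\phi_\tau^\epsilon\}\subset\mathcal H_X(\omega_0)$ joining $\phi_t$ to $0$ \cite{Chen}. Along a smooth path the first-variation formula is legitimate, so $\frac{d}{d\tau}\big|_{\tau=0}\mathcal D_{X,t}(\phi_\tau^\epsilon)=0$ by criticality of $\phi_t$; the computation in that proof shows $\partial_\tau\partial_{\bar\tau}\mathcal D^0_X(\phi_\tau^\epsilon)=O(\epsilon)$, while Berndtsson still applies to $t\phi_\tau^\epsilon$ (again a subgeodesic, since $\pi^*\omega_0+\sqrt{-1}\partial\bar\partial\phi_\tau^\epsilon>0$), so $\tau\mapsto\mathcal D_{X,t}(\phi_\tau^\epsilon)$ is convex up to an $O(\epsilon)$ error. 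Hence $\mathcal D_{X,t}(0)\ge\mathcal D_{X,t}(\phi_t)-C\epsilon$, and letting $\epsilon\to 0$ gives exactly your inequality $\mathcal D^0_X(\phi_t)\le 0$. With this patch the proof is complete and correct.
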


With the assumption of properness, Theorem \ref{0301} will follows from the above and the next lemmas.
\begin{lem}\label{0308}
For any solution $\phi_t$ of (\ref{0302}) with $t<1$,
$$\min_{\sigma\in H^c}\{I_{X}((\phi_t)_\sigma)-J_{X}((\phi_t)_\sigma)\}=I_{X}(\phi_t)-J_{X}(\phi_t).$$
\end{lem}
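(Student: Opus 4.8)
The plan is to show that $\mathrm{id}$ is the minimizer by reducing to geodesic rays in the symmetric space $H^c/H$ and combining convexity with the vanishing of the first variation, the latter being forced by \eqref{0302} through the extremal-field condition \eqref{0201}. Choosing $\omega_0$ to be $H$-invariant (possible since $H\supseteq K_X$ is compact), the function $F(\sigma):=I_X((\phi_t)_\sigma)-J_X((\phi_t)_\sigma)$ is invariant under the right $H$-action and descends to $H^c/H$, whose Cartan decomposition $H^c=H\cdot\exp(\sqrt{-1}\,\mathfrak h)$ ($\mathfrak h=\mathrm{Lie}(H)$) exhibits every coset on a geodesic ray $s\mapsto\exp(\sqrt{-1}sv)$, $v\in\mathfrak h$, issuing from the base point. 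Hence it suffices to prove that for each $v$ the function $g(s):=I_X((\phi_t)_{\sigma_s})-J_X((\phi_t)_{\sigma_s})$, with $\sigma_s=\exp(\sqrt{-1}sv)$, is convex with $g'(0)=0$; note $\psi_s:=(\phi_t)_{\sigma_s}\in\mathcal H_X(\omega_0)$ because $\sigma_s$ commutes with $K_X$, and I write $\xi$ for the holomorphic vector field generating $\sigma_s$, so that $\dot\psi_s=\theta_\xi(\omega_{\psi_s})$.

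Next I would establish convexity. The orbit $s\mapsto\psi_s$ is a geodesic in $\mathcal H_X(\omega_0)$, and along it, differentiating exactly as in the proof of Lemma \ref{0303},
\[
g'(s)=-\int_M\Big(\bigtriangleup_{\omega_{\psi_s}}-\frac{X}{1-\theta_X(\omega_{\psi_s})}\Big)\psi_s\cdot\dot\psi_s\,(1-\theta_X(\omega_{\psi_s}))\omega_{\psi_s}^n.
\]
A second differentiation, using the geodesic equation $\ddot\psi_s=|\bar\partial\dot\psi_s|_{\omega_{\psi_s}}^2$ and the weighted integration by parts attached to the self-dual operator of Remark \ref{rmk 2.3}, gives $g''(s)\ge0$. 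This is the weighted analogue of the classical convexity of Aubin's $I-J$ along geodesics, so $g$ attains its minimum precisely at its critical points, and everything reduces to the identity $g'(0)=0$.

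The crux is this first-variation computation. Integrating the displayed formula by parts — the drift term combining with $\partial\log(1-\theta_X(\omega_{\phi_t}))$ so that the first-order $X$-contributions cancel — and using that $\dot\psi_0=\theta_\xi(\omega_{\phi_t})$ is a holomorphy potential, i.e. $g^{i\bar j}\theta_\xi(\omega_{\phi_t})_{,\bar j}=\xi^i$, one obtains $g'(0)=\int_M\xi(\phi_t)(1-\theta_X(\omega_{\phi_t}))\omega_{\phi_t}^n$. I would then rewrite \eqref{0302} in curvature form,
\[
Ric(\omega_{\phi_t})-\omega_{\phi_t}-\sqrt{-1}\partial\bar\partial\log(1-\theta_X(\omega_{\phi_t}))=-(1-t)\sqrt{-1}\partial\bar\partial\phi_t,
\]
which identifies the modified Ricci potential of $\omega_{\phi_t}$ as $h^X_{\phi_t}=-(1-t)\phi_t+\mathrm{const}$. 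Since $t<1$, substituting $\phi_t=-\frac{1}{1-t}h^X_{\phi_t}+\mathrm{const}$ turns $g'(0)$ into $-\frac{1}{1-t}\int_M\xi(h^X_{\phi_t})(1-\theta_X(\omega_{\phi_t}))\omega_{\phi_t}^n$, which is the modified (relative) Futaki invariant of $\xi$. This quantity is independent of the chosen $K_X$-invariant metric in $2\pi c_1(M)$ and coincides with $Fut_X(\xi)$, hence vanishes for every $\xi\in\eta(M)$ by the defining relation \eqref{0201} of the extremal field $X$; therefore $g'(0)=0$.

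Assembling the pieces, convexity together with $g'(0)=0$ yields $g(s)\ge g(0)$ for every $v\in\mathfrak h$, and passing to $H^c/H$ gives $\min_{\sigma\in H^c}\{I_X((\phi_t)_\sigma)-J_X((\phi_t)_\sigma)\}=I_X(\phi_t)-J_X(\phi_t)$, as asserted. I expect the genuine difficulty to lie in the third step: verifying that the first variation is a metric-independent holomorphic invariant and matching it with $Fut_X$, which requires a Futaki–Mabuchi-type computation adapted to the weighted measure $(1-\theta_X)\omega^n$. By contrast, the convexity in the second step and the reduction to geodesic rays in the first are comparatively routine once $\omega_0$ is taken $H$-invariant.
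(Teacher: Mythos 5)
Your proposal follows essentially the same route as the paper's own proof: both arguments show that along the orbit $s\mapsto(\phi_t)_{\sigma_s}$ of a one-parameter subgroup generated by $Y\in\mathfrak h^c$ (which satisfies the geodesic equation), the functional $I_X-J_X$ is convex, and that its first variation at $s=0$ equals $-\tfrac{1}{1-t}\,Fut_X(Y)$ by substituting the equation \eqref{0302} in the form $h_t+(1-t)\phi_t=\log(1-\theta_X(\omega_{\phi_t}))+c_t$, so it vanishes by the defining relation \eqref{0201} of the extremal field. The only differences are presentational: the paper identifies the first variation with $Fut_X(Y)$ by a direct integration by parts using $\bigtriangleup_{\omega_{\phi_t}}\theta_Y(\omega_{\phi_t})+Y(h_t)+\theta_Y(\omega_{\phi_t})=\mathrm{const}$ (rather than invoking metric-independence of the modified Futaki invariant, as you do), and your Cartan-decomposition/$H$-invariance step makes explicit the reduction of the minimization over all of $H^c$ to one-parameter rays, a point the paper leaves implicit.
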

\begin{proof}
We will use the argument of Tian \cite{Tian297} to prove this lemma. For any $Y\in\mathfrak h^c$, let $\sigma(s)$ be the one parameter group generated by Re$(Y)$ with $\sigma(0)=id$. For solution $\phi_t$ of (\ref{0302}), set $\phi_{t\,s}=(\phi_t)_{\sigma(s)}$.
Note that (\ref{0302}) is equivalent to
\begin{eqnarray*}
h_{t}+(1-t)\phi_t=\log(1-\theta_X(\omega_{\phi_t}))+c_t,
\end{eqnarray*}
where $h_t$ is the normalized Ricci potential of $\omega_{\phi_t}$ and $c_t$ is a constant depending on $t$. Thus
\begin{eqnarray}\label{0309}
&&\left.{\frac{\partial}{\partial s}}\right|_{s=0}(I_{X}-J_{X})(\phi_{t\,s})\notag\\
&=&\int_M\left.{\frac{\partial}{\partial s}}\right|_{s=0}\phi_{t~s,\bar k}\phi_{t,i}g^{i\bar k}(1-\theta_X(\omega_{\phi_t}))\omega_{\phi_t}^n\notag\\
&=&-{\frac{1}{1-t}}\int_MY^i\left[h_{t,i}+{\frac{\theta_X(\omega_{\phi_t})_{,i}}{1-\theta_X(\omega_{\phi_t})}}\right](1-\theta_X(\omega_{\phi_t}))\omega_{\phi_t}^n\notag\\
&=&-{\frac{1}{1-t}}\int_MY(h_t)\omega_{\phi_t}^n+{\frac{1}{1-t}}\left(\int_MY(h_t)\theta_X(\omega_{\phi_t})\omega_{\phi_t}^n-\int_MY^i\theta_X(\omega_{\phi_t})_{,i}\omega_{\phi_t}^n\right).
\end{eqnarray}
Recall that $\theta_Y(\omega_{\phi_t})$ satisfies
\begin{eqnarray*}
\bigtriangleup_{\omega_{\phi_t}}\theta_Y(\omega_{\phi_t})+Y(h_{t})+\theta_Y(\omega_{\phi_t})=const.,
\end{eqnarray*}
thus
\begin{eqnarray*}
\int_MY(h_t)\theta_X(\omega_{\phi_t})\omega_{\phi_t}^n=-\int_M\theta_X(\omega_{\phi_t})\theta_Y(\omega_{\phi_t})\omega_{\phi_t}^n-\int_M\theta_X(\omega_{\phi_t})\bigtriangleup_{\omega_{\phi_t}}\theta_Y(\omega_{\phi_t})\omega_{\phi_t}^n.
\end{eqnarray*}
Substituting this into (\ref{0309}) and by integration by parts, it holds
\begin{equation}\label{0309+}
\left.{\frac{\partial}{\partial s}}\right|_{s=0}(I_{X}-J_{X})(\phi_{t\,s})=-{\frac{1}{1-t}}\int_MY(h_t)\omega_{\phi_t}^n-{\frac{1}{1-t}}\int_M\theta_X(\omega_{\phi_t})\theta_Y(\omega_{\phi_t})\omega_{\phi_t}^n=0.
\end{equation}
The last equality follows from (\ref{0201}). This shows that $s=0$ is a critical point of $(I_{X}-J_{X})(\phi_{t\,s})$.

To prove the lemma, it suffices to show that $(I_{X}-J_{X})(\phi_{t\,s})$ is convex with respect to $s$. It is direct to check that
\begin{eqnarray}\label{0310}
{\frac{\partial^2}{\partial^2 s}}\phi_{t\,s}=\left|\bar\partial\left({\frac{\partial}{\partial s}}\phi_{t\,s}\right)\right|_{\omega_{\phi_{t\,s}}}^2,
\end{eqnarray}
thus $\phi_{t\,s}$ gives a geodesic in the space of K\"ahler potentials.
In the following, we denote $\phi_s=\phi_{t\,s}$ for fixed $t$ for simplicity and $\omega_{\phi_{t\,s}}=\sqrt{-1}g_{i\bar j}(s)dz^i\wedge d\bar z^j$. Then
\begin{eqnarray}\label{0311}
{\frac{\partial}{\partial s}}\bigtriangleup_{\omega_{\phi_s}}\phi_{s}=-g^{i\bar k}g^{\bar jl}\cdot\phi_{s,\bar kl}\phi_{s,\bar ji}+\bigtriangleup_{\omega_{\phi_s}}\dot\phi_{s}.
\end{eqnarray}
Note that
\begin{eqnarray}\label{0312}
{\frac{d}{d s}}(I_{X}-J_{X})(\phi_{s})=-\int_{M}\dot\phi_s\bigtriangleup_{\omega_{\phi_s}}\phi_s(1-\theta_X(\omega_{\phi_s}))\omega_{\phi_s}^n+\int_M\dot\phi_sX(\phi_s)\omega_{\phi_s}^n.
\end{eqnarray}
We want to differentiate the above equality. For the first term, we have by (\ref{0311})
\begin{eqnarray*}
&&{\frac{d}{d s}}\int_{M}\dot\phi_s\bigtriangleup_{\omega_{\phi_s}}\phi_s(1-\theta_X(\omega_{\phi_s}))\omega_{\phi_s}^n\notag\\
&=&\int_{M}\ddot{\phi}_s\bigtriangleup_{\omega_{\phi_s}}\phi_s(1-\theta_X(\omega_{\phi_s}))\omega_{\phi_s}^n-\int_MX^i\dot\phi_{s,i}\bigtriangleup_{\omega_{\phi_s}}\phi_s\dot\phi_s\omega_{\phi_s}^n\notag\\
&&+\int_{M}\dot\phi_s\bigtriangleup_{\omega_{\phi_s}}\dot\phi_s\bigtriangleup_{\omega_{\phi_s}}\phi_s(1-\theta_X(\omega_{\phi_s}))\omega_{\phi_s}^n+\int_{M}\dot\phi_s\bigtriangleup_{\omega_{\phi_s}}\dot\phi_s(1-\theta_X(\omega_{\phi_s}))\omega_{\phi_s}^n\notag\\
&&-\int_{M}\dot\phi_s(\dot\phi_{s,l\bar k}\phi_{s,i\bar j})g^{i\bar k}g^{l\bar j}(1-\theta_X(\omega_{\phi_s}))\omega_{\phi_s}^n.
\end{eqnarray*}
Substituting (\ref{0310}) into the first term and by integration by parts, it follows
\begin{eqnarray}\label{0313}
&&{\frac{d}{d s}}\int_{M}\dot\phi_s\bigtriangleup_{\omega_{\phi_s}}\phi_s(1-\theta_X(\omega_{\phi_s}))\omega_{\phi_s}^n\notag\\
&=&-\int_M\dot\phi_s\dot\phi_{s,i}(\bigtriangleup_{\omega_{\phi_s}}\phi_s)_{,\bar k}g^{i\bar k}(1-\theta_X(\omega_{\phi_s}))\omega_{\phi_s}^n-\int_M\dot\phi_s\dot\phi_{s,l\bar k}\phi_{s,i\bar j}g^{i\bar k}g^{l\bar j}(1-\theta_X(\omega_{\phi_s}))\omega_{\phi_s}^n\notag\\
&&+\int_{M}\dot\phi_s\bigtriangleup_{\omega_{\phi_s}}\dot\phi_s(1-\theta_X(\omega_{\phi_s}))\omega_{\phi_s}^n\notag\\
&=&\int_M\dot\phi_{s,l}\dot\phi_{s,\bar k}\phi_{s,i\bar j}g^{l\bar j}g^{i\bar k}(1-\theta_X(\omega_{\phi_s}))\omega_{\phi_s}^n-\int_M\dot\phi_sX^i\dot\phi_{s,l}\phi_{s,i\bar j}g^{l\bar j}\omega_{\phi_s}^n\notag\\
&&+\int_{M}\dot\phi_s\bigtriangleup_{\omega_{\phi_s}}\dot\phi_s(1-\theta_X(\omega_{\phi_s}))\omega_{\phi_s}^n\notag\\
&=&\int_M\dot\phi_{s,l}\dot\phi_{s,\bar k}(\phi_{s,i\bar j}-g_{i\bar j})g^{i\bar k}g^{l\bar j}(1-\theta_X(\omega_{\phi_s}))\omega_{\phi_s}^n-\int_M\dot\phi_sX^i\dot\phi_{s,l}\phi_{s,i\bar j}g
^{l\bar j}\omega_{\phi_s}^n\notag\\
&&+\int_{M}X^i\dot\phi_{s,i}\dot\phi_s\omega_{\phi_s}^n,
\end{eqnarray}
where $g_{i\bar j}=g_{i\bar j}(s)$. The second term in (\ref{0312}) gives
\begin{eqnarray*}
{\frac{d}{d s}}\int_{M}\dot\phi_sX^i\phi_{s,i}\omega_{\phi_s}^n
=\int_M\ddot\phi_sX^i\phi_{s,i}\omega_{\phi_s}^n+\int_M\dot\phi_sX^i\dot\phi_{s,i}\omega_{\phi_s}^n+\int_M\dot\phi_sX^i\phi_{s,i}\bigtriangleup_{\omega_{\phi_s}}\dot\phi_s\omega_{\phi_s}^n.
\end{eqnarray*}
Substituting (\ref{0310}) into the above equality and by integration by parts again, we have
\begin{eqnarray}\label{0314}
{\frac{d}{d s}}\int_{M}\dot\phi_sX^i\phi_{s,i}\omega_{\phi_s}^n
=-\int_M\dot\phi_s\dot\phi_{s,l}(X^i\phi_{s,i})_{,\bar j}g^{l\bar j}\omega_{\phi_s}^n+\int_M\dot\phi_sX^i\dot\phi_{s,i}\omega_{\phi_s}^n.
\end{eqnarray}
Combining (\ref{0312})-(\ref{0314}), we get
\begin{eqnarray*}\label{0315}
{\frac{d^2}{d s^2}}(I_{X}-J_{X})(\phi_{s})=\int_{M}\dot\phi_{s,\bar k}\dot\phi_{s,l}g_{i\bar j}(0)g^{i\bar k}g^{l\bar j}(1-\theta_X(\omega_{\phi_s}))\omega_{\phi_s}^n\geq0.
\end{eqnarray*}
Hence, the lemma is proved.
\end{proof}

\begin{proof}[Proof of Theorem \ref{inverse proper}]
The theorem can be proved by using the properness principle of \cite{DR}. Suppose $\omega_0$ is the Mabuchi metric and ${Iso}_0(M,\omega_0)$ is the identity component of the corresponding isometry group. By a Calabi-Matsushima typed theorem of Mabuchi \cite{Mab4}, we have
\begin{eqnarray}\label{matsushima}
\mathfrak{aut}^X(M)=\mathfrak{iso}(M,\omega_0)\oplus J\mathfrak{iso}(M,\omega_0),
\end{eqnarray}
where $\mathfrak{aut}^X(M)$ and $\mathfrak{iso}(M,\omega_0)$ are Lie algebras of
$Aut_0^X(M)$ and ${Iso}_0(M,\omega_0)$, respectively.
We will check that $\mathcal D_X(\cdot)$, $Aut^X(M)$ satisfy (P1)-(P2), (P4)-(P7) in the Hypothesis 3.2 of \cite{DR}, which are enough for the "existence$\Rightarrow$ properness" direction:
\begin{itemize}
\item[(P1)] This is confirmed by \cite[Theorem 1.1]{Bern} and (2) of Lemma \ref{linear properties};
\item[(P2)] This can be shown by using (1) of Lemma \ref{linear properties} and Lemmas 5.15, 5.20, 5.29 of \cite{DR};
\item[(P4)] This is \cite[Lemma 5.9]{DR};
\item[(P5)] This is shown in \cite[Appendix 4]{Mab2};
\item[(P6)] This can be shown exactly as in \cite[Theorem 8.1]{DR}, by using (\ref{matsushima}) instead of \cite[Proposition 6.10]{DR};
\item[(P7)] This follows from the co-cycle condition of $\mathcal D_X(\cdot)$.
\end{itemize}
The theorem then follows from the second part of \cite[Theorem 3.4]{DR}.
\end{proof}

\section{Existence criterion on Fano group compactifications}
In this section, we will prove Theorem \ref{bar condition thm}.
%We will show in \S\ref{linear} that $c_X>0$ and (\ref{bar condition}) implies unform relative Ding stability. The sufficiency part will be proved in \S\ref{sufficiency} and the necessity in \S\ref{necessity}. Recall the notations in \S\ref{grp cpt}.
Let $M$ be a group compactification and $\omega_0$ be a $K\times K$-invariant K\"ahler metric in $2\pi c_1(M)$. Assume $\omega_0=\sqrt{-1}\partial\bar\partial\psi_0$ on $G$.
For  $\phi\in\mathcal H_{K\times K}(\omega_0)$, we will write $\psi_\phi$ in short for $\psi_0+\phi$ and $u_\phi$ the Legendre function of $\psi_\phi$.

\subsection{Reduction of the modified Ding functional}
We will give a formula of $\mathcal D_{X}(\phi)$ in terms of $\phi$ and $u_\phi$.
First, we compute the Futaki invariant of a vector field in $\mathfrak z(\mathfrak g)$.
\begin{lem}\label{Futaki comput}Let $Y$ be a vector field of form
\begin{eqnarray}\label{coefficients Y}
Y=\sqrt{-1}Y^iE_0^i,~1\leq i\leq r
\end{eqnarray}
for some $Y^i\in\mathbb C$ such that $\alpha_iY^i=0$ for any $\alpha\in\Phi$. Then
\begin{eqnarray}\label{4103}
Fut(Y)=-V\cdot Y^i\mathbf b_i,
\end{eqnarray}
where $\mathbf b={\frac1V}\int_{2P_+}y\pi(y) \,dy$
is the barycentre of $2P_+$ with respect to the measure $\pi(y)\,dy$.
\end{lem}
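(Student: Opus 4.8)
The plan is to compute the Futaki invariant $Fut(Y)$ directly from its integral definition, and then translate the integral on $M$ into an integral over the polytope $2P_+$ using the group compactification structure. Recall the Futaki invariant can be written as
\begin{eqnarray*}
Fut(Y)=-\int_M Y(h_0)\,\omega_0^n=\int_M \theta_Y(\omega_0)\,\omega_0^n \text{ (up to normalization)},
\end{eqnarray*}
where $h_0$ is the Ricci potential and $\theta_Y(\omega_0)$ is the normalized potential of $Y$ from \eqref{0201+}. Since $Y=\sqrt{-1}Y^iE_0^i$ lies in $\mathfrak z(\mathfrak g)$ and the integrand is $K\times K$-invariant, I can restrict all computations to the torus $Z$ and push them to $\mathfrak a$ via the $KAK$-integral formula of Proposition \ref{KAK int}.

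First I would identify the potential $\theta_Y(\omega_0)$ explicitly on $\mathfrak a_+$. By Lemma \ref{Derivative}, for $Y$ of the form \eqref{coefficients Y} the action of $E_0^i$ on a $K\times K$-invariant function is $d\psi(\mathrm{Im}(E_0^i))$, so $Y(\psi_0)=\sqrt{-1}Y^i E_0^i(\psi_0)$ corresponds under the Legendre transform \eqref{Legendre} to the linear coordinate function $y\mapsto Y^i y_i=\langle Y,y\rangle$ on the image polytope $2P_+$. The condition $\alpha_i Y^i=0$ for all $\alpha\in\Phi$ guarantees (via Lemma \ref{Hessian}) that the holomorphicity and Weyl-invariance are compatible, so $\theta_Y(\omega_0)$ is, up to the normalizing constant from \eqref{0201+}, the function $y\mapsto \langle Y,y\rangle$.

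Next I would substitute the Monge--Amp\`ere measure formula \eqref{MA} together with the Legendre change of variables $y=\nabla\psi_0(x)$. Under this change, the measure $\omega_0^n$ restricted and integrated over the group becomes, after the Jacobian cancels the $\mathbf J(x)$ factor against the $KAK$-weight, a measure proportional to $\pi(y)\,dy=\prod_{\alpha\in\Phi_+}\langle\alpha,y\rangle^2\,dy$ on $2P_+$. Hence
\begin{eqnarray*}
Fut(Y)=\text{const}\cdot\int_{2P_+}\langle Y,y\rangle\,\pi(y)\,dy=-V\cdot Y^i\left({\frac1V}\int_{2P_+}y_i\,\pi(y)\,dy\right)=-V\cdot Y^i\mathbf b_i,
\end{eqnarray*}
which is exactly \eqref{4103} with $\mathbf b$ the barycentre. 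The normalization $V=\int_{2P_+}\pi(y)\,dy$ and the sign are fixed by matching the constant $C_H$ (normalized to $1$) and the factor $2^{-(r+n)}$ in \eqref{MA}.

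The main obstacle I anticipate is bookkeeping the chain of normalizing constants correctly: the Haar-measure constant $C_H$, the factor $\frac{1}{2^{r+n}}$ and the $\frac{1}{\mathbf J(x)}$ in \eqref{MA}, the Jacobian of the Legendre map $\det\mathrm{Hess}_{\mathbb R}(\psi_0)$, and the normalization $\int_M\theta_Y(\omega_0)\,\omega_0^n=0$ from \eqref{0201+}. Verifying that these combine to leave precisely the measure $\pi(y)\,dy$ and the clean factor $-V$ is the delicate part; once the change of variables is set up, the identification of $\theta_Y$ with the linear function $\langle Y,y\rangle$ is the conceptual heart, and the rest is a careful but routine tracking of constants.
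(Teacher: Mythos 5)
Your overall route---restricting to the $K\times K$-invariant picture, identifying the potential of $Y$ with the linear function $Y^iy_i$ via Lemma \ref{Derivative}, and converting $\int_M(\cdot)\,\omega_0^n$ into $\int_{2P_+}(\cdot)\,\pi(y)\,dy$ through Proposition \ref{KAK int} and the Legendre change of variables---is the same as the paper's. But the step you dismiss as ``routine tracking of constants'' is in fact the core of the paper's proof, and your setup mishandles it. The formula $Fut(Y)=-\int_M\hat\theta_Y(\omega_0)\,\omega_0^n$ used in the paper (its \eqref{4102}) is valid only for the potential normalized by $\int_M\hat\theta_Y(\omega_0)e^{h_0}\omega_0^n=0$ (its \eqref{4104}), \emph{not} for the normalization \eqref{0201+} you invoke: with $\int_M\theta_Y(\omega_0)\,\omega_0^n=0$ the expression $\int_M\theta_Y(\omega_0)\,\omega_0^n$ is identically zero and computes nothing. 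Equivalently, $Fut(Y)=\int_M\theta_Y(\omega_0)(e^{h_0}-1)\,\omega_0^n$ independently of normalization, so what actually has to be evaluated is the $e^{h_0}$-weighted average of the linear potential $Y^i\partial_{x^i}\psi_0$---and your plan contains no argument for it; the parenthetical ``(up to normalization)'' hides exactly this nontrivial content.

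The paper resolves it by writing $\hat\theta_Y(\omega_0)=Y^i\partial_{x^i}\psi_0+C$ and proving $C=0$, i.e.\ $\int_{\mathfrak a_+}Y^i\partial_{x^i}\psi_0\,e^{-\psi_0}\mathbf J(x)\,dx=0$, which requires a genuine analytic argument rather than bookkeeping: an integration by parts over the noncompact cone $\mathfrak a_+$ in which (i) the hypothesis $\alpha_iY^i=0$ enters precisely to give $Y^i\partial_{x^i}\mathbf J=2\mathbf J\sum_{\alpha\in\Phi_+}\alpha(Y)\coth\alpha(x)\equiv0$ (you invoke this hypothesis only vaguely, for ``compatibility'' of holomorphicity and Weyl-invariance), (ii) the boundary contribution on $\partial\mathfrak a_+$ vanishes because $\mathbf J=0$ there, and (iii) the contribution at infinity vanishes because $e^{-\psi_0}\mathbf J=e^{4\rho(x)-\psi_0}\prod_{\alpha\in\Phi_+}\bigl(\tfrac{1-e^{-2\alpha(x)}}{2}\bigr)^2\to0$, which uses the Fano condition $4\rho\in\mathrm{Int}(2P_+)$. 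Without establishing $C=0$ (equivalently, the vanishing of the weighted average), your final line would carry an undetermined additive term $-CV$, and the clean barycentre formula \eqref{4103} would not follow. To repair the proposal, replace the ``up to normalization'' step by this integration-by-parts and decay argument; the rest of your outline (the $\mathbf J$-cancellation in \eqref{MA} against the $KAK$-weight and the change of variables $y=\nabla\psi_0(x)$) is correct and matches the paper.
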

\begin{proof}
Since $Y\in\mathfrak z(\mathfrak g)$, it is $K\times K$-invariant, so is its potential. Recall that
\begin{eqnarray}\label{4102}
Fut(Y)=-\int_M\hat\theta_Y(\omega_0)\omega_0^n,
\end{eqnarray}
where $\hat\theta_Y(\omega_0)$ is the potential of $Y$ normalized by
\begin{eqnarray}\label{4104}
\int_M\hat\theta_Y(\omega_0)e^{h_0}\omega_0^n=0.
\end{eqnarray}
By  $\omega_0=\sqrt{-1}\partial\bar\partial\psi_0$ and Lemma \ref{Derivative}, it is not hard to see that
\begin{eqnarray}\label{4101}
\hat\theta_Y(\omega_0)=Y^i{\frac{\partial}{\partial x^i}}\psi_0+C,~\forall x\in\mathfrak a_+,
\end{eqnarray}
where $C$ is a constant determined by (\ref{4104}). On the other hand, we have
\begin{eqnarray}\label{hat nor potential}
\int_{\mathfrak a_{+}}Y^i{\frac{\partial}{\partial x^i}}\psi_0\,e^{h_0}\det(\psi_{0,ij})\prod_{\alpha\in \Phi_+}\langle\alpha,\nabla \psi_0\rangle^2\,dx
&=&\int_{\mathfrak a_{+}}Y^i{\frac{\partial}{\partial x^i}}\psi_0\,e^{-\psi_0}\mathbf J(x)\,dx\notag\\
&=&-\int_{\mathfrak a_{+}}Y^i{\frac{\partial}{\partial x^i}}\left(e^{-\psi_0}\mathbf J(x)\right)\,dx,
\end{eqnarray}
here we used the fact that
\begin{eqnarray*}
Y^i{\frac{\partial}{\partial x^i}}\mathbf J(x)=2\mathbf J(x)\sum_{\alpha\in \Phi_+}Y^i\alpha_i\cdot\coth\langle\alpha,x\rangle\equiv0.
\end{eqnarray*}
Note that when $M$ is Fano, $4\rho\in\text{Int}(2P_+)$ (cf. \cite[Remark 4.10]{Del1} or \cite[\S 3.2]{LZZ}), by \cite[Proposition 2.10]{Del2}. Hence, we have
\begin{eqnarray*}
e^{-\psi_0}\mathbf J(x)=e^{4\rho(x)-\psi_0}\prod_{\alpha\in\Phi_+}\left({\frac{1-e^{-2\alpha(x)}}{2}}\right)^2\to 0,~x\to \infty \text{ in }\mathfrak a_+.
\end{eqnarray*}
Also recall the fact that $\mathbf J(x)=0$ on $\partial(\mathfrak a_{+})$. By integration by parts in (\ref{hat nor potential}), we see that
\begin{eqnarray*}
\int_{\mathfrak a_{+}}Y^i{\frac{\partial}{\partial x^i}}\psi_0e^{h_0}\det(\psi_{0,ij})\prod_{\alpha\in \Phi_+}\langle\alpha,\nabla \psi_0\rangle^2\,dx=0.
\end{eqnarray*}
Thus by Proposition \ref{KAK int}, we get $C=0$ in (\ref{4101}). (\ref{4103}) then follows from (\ref{4102}).
\end{proof}

Then we use (\ref{0201}) to determine the potential of the extremal vector field $X$.
\begin{lem}
Under the coordinates chosen in \S\ref{grp cpt}, the extremal field $X$, when restricted on $Z$, can be expressed by
\begin{eqnarray}\label{coefficients X}
X=\sqrt{-1}X^iE_0^i,~1\leq i\leq r
\end{eqnarray}
for some $X^i\in\mathbb R$ such that $\alpha(X)=0,~\forall \alpha\in\Phi$. Furthermore, $X^i$'s are determined by the condition
\begin{eqnarray}\label{0405}
\int_{2P_+}v^iy_i(1-\theta_X(y))\pi(y)\,dy=0,~\forall v\in\mathfrak z(\mathfrak g),
\end{eqnarray}
where $\theta_X(y)=X^iy_i-X^i\mathbf b_i$.
\end{lem}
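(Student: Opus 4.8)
The lemma asserts two things: first, that the extremal vector field $X$ restricted to $Z$ lies in the center of $\mathfrak{g}$ and hence has the form $\sqrt{-1}X^iE_0^i$ with $\alpha(X)=0$ for all roots; second, that the real coefficients $X^i$ are pinned down by the orthogonality condition (\ref{0405}), where $\theta_X(y)=X^iy_i-X^i\mathbf b_i$. The structural claim $X\in\mathfrak z(\mathfrak g)$ is not really something to re-prove here — it was already recorded in the introduction that $X\in\eta_c(M)$, the center of $\eta_r(M)$. So the real content is to translate the abstract defining equation (\ref{0201}) for the extremal field into the explicit polytope condition (\ref{0405}).

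**Plan.** The plan is to apply the defining relation (\ref{0201}), namely $Fut(v)+\int_M\theta_v(\omega_0)\theta_X(\omega_0)\,\omega_0^n=0$ for all $v$, but restricted to the relevant test fields $v\in\mathfrak z(\mathfrak g)$ of the form (\ref{coefficients Y}). The point is that both pieces of (\ref{0201}) have just been computed for such fields. First I would note that for $Y\in\mathfrak z(\mathfrak g)$ Lemma \ref{Derivative} gives $\hat\theta_Y(\omega_0)=Y^i\partial_{x^i}\psi_0=Y^i y_i$ after using the Legendre relation $y_i=\partial\psi_0/\partial x^i$ and the fact (established in the proof of Lemma \ref{Futaki comput}) that the normalizing constant vanishes. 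Thus the normalized potential $\theta_v(\omega_0)$ of a central field is, as a function on the polytope, simply the linear function $v^iy_i$ minus its average; and correspondingly $\theta_X(y)=X^iy_i-X^i\mathbf b_i$ is precisely the normalized (mean-zero against $\pi(y)\,dy$) linear potential of $X$. Since $X\in\mathfrak z(\mathfrak g)$ and is $K_X$-invariant, this expression is real-valued, which accounts for $X^i\in\mathbb R$.

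**Carrying it out.** Second, I would rewrite the two terms of (\ref{0201}) as integrals over $2P_+$. By Lemma \ref{Futaki comput}, $Fut(v)=-V\cdot v^i\mathbf b_i=-\int_{2P_+}v^iy_i\,\pi(y)\,dy$ (since $V\mathbf b=\int_{2P_+}y\,\pi(y)\,dy$). For the second term, the $K\times K$-invariance lets me push $\int_M\theta_v(\omega_0)\theta_X(\omega_0)\,\omega_0^n$ down to $Z$ via the $KAK$-integral formula (Proposition \ref{KAK int}) together with the Monge--Amp\`ere identity (\ref{MA}); after the change of variables $x\mapsto y=\nabla\psi_0(x)$ that carries $\mathfrak a_+$ diffeomorphically onto $2P_+$, the weight collapses to $\pi(y)=\prod_{\alpha\in\Phi_+}\langle\alpha,y\rangle^2$, so this term becomes $\int_{2P_+}(v^iy_i)\,\theta_X(y)\,\pi(y)\,dy$, with $v^iy_i$ the unnormalized linear potential of $v$ (the mean-zero correction is harmless because it integrates against the same $\pi$-measure — one may check the constant drops out, or simply keep $v$ arbitrary in $\mathfrak z(\mathfrak g)$ so both normalized and unnormalized forms give the same variational condition). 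Adding the two pieces, (\ref{0201}) becomes
\begin{eqnarray*}
\int_{2P_+}v^iy_i\big(\theta_X(y)-1\big)\pi(y)\,dy=0,
\end{eqnarray*}
and absorbing the overall sign this is exactly (\ref{0405}). Finally, substituting $\theta_X(y)=X^iy_i-X^i\mathbf b_i$ turns (\ref{0405}) into a nondegenerate linear system for the $X^i$ (the associated bilinear form is, up to constants, the weighted covariance matrix $\int_{2P_+}y_iy_j\,\pi\,dy$ minus its mean outer product, which is positive definite), so the $X^i$ are uniquely determined, matching the uniqueness of the extremal field.

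**Main obstacle.** The routine calculus — integration by parts, boundary vanishing of $\mathbf J$, the diffeomorphism change of variables — is already packaged in the earlier lemmas, so I do not expect difficulty there. The one place demanding care is the bookkeeping of \emph{normalizations}: the Futaki invariant uses the potential $\hat\theta_Y$ normalized by $\int_M\hat\theta_Y e^{h_0}\omega_0^n=0$, whereas $\theta_X$ in (\ref{0201}) is Hodge-normalized by $\int_M\theta_X\,\omega_0^n=0$, and the polytope expression $X^iy_i-X^i\mathbf b_i$ encodes yet a third mean-zero convention (against $\pi(y)\,dy$). The main point of the argument is really to verify that these normalizations are mutually consistent so that the additive constants genuinely cancel when the two terms of (\ref{0201}) are combined; once that is checked, the identification of (\ref{0201}) with (\ref{0405}) is immediate.
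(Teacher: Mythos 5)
Your proposal is correct and follows essentially the same route as the paper's proof: restrict (\ref{0201}) to central test fields $v\in\mathfrak z(\mathfrak g)$, use Lemma \ref{Futaki comput} together with the polytope expression $\theta_v(y)=v^iy_i-v^i\mathbf b_i$ for the normalized potentials, and conclude via positive definiteness of the matrix $a_{ij}={\frac1V}\int_{2P_+}y_iy_j\pi(y)\,dy-\mathbf b_i\mathbf b_j$ (which the paper proves by a Jensen-inequality argument and you assert as standard). The only cosmetic difference is one of order: the paper first writes down the candidate linear system (\ref{equation X}) and verifies its solution satisfies (\ref{0201}), invoking uniqueness of the extremal field, whereas you derive (\ref{0405}) directly from (\ref{0201}) and then note unique solvability --- the computations, including your careful matching of the three normalization conventions, are the same.
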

\begin{proof}
Since Futaki invariant is a character on $\eta_r(M)$, it suffices to consider (\ref{0201}) for all $v\in\mathfrak z(\eta_r(M))\subset\mathfrak z(\mathfrak g)$. We may assume $X$ is of form (\ref{coefficients X}). Since $K_X$ lies in a compact group, we have $X^i\in\mathbb R$.

For $\phi\in\mathcal H_{K\times K}(\omega_0)$ and $v\in\eta_c(M)$, $\theta_v(\omega_\phi)$ is $K\times K$-invariant, so it can be written as
\begin{equation}\label{+5201}
\theta_{v}(\omega_{\phi})=v^i{\frac{\partial\psi_\phi}{\partial x^i}}+c_v,\nonumber
\end{equation}
where $v^i$ and $c_v$ are constants with $v^i\alpha_i=0$ for any $\alpha\in\Phi_+$.

By the second equality of (\ref{0201+}), the potential is determined by
\begin{eqnarray}\label{potential v}
\theta_v(y)=v^iy_i-v^i\mathbf b_i.
\end{eqnarray}
Let $r_z=\dim(\mathfrak z(\mathfrak g))$ and suppose $E_1^0,...,E_{r_z}^0$ be a basis of $\mathfrak z(\mathfrak g)$. We claim that the extremal vector field $X$ is given by $X=\displaystyle\sum_1^{r_z}\sqrt{-1}X^iE_i^0\in\mathfrak z(\mathfrak g)$ such that
\begin{eqnarray}\label{equation X}
\mathbf b_i={\frac1V}\left(\int_{2P_+}y_iy_j\pi(y)\,dy-V\mathbf b_i\mathbf b_j\right)X^j,\ 1\leq i,j\leq r_z.
\end{eqnarray}
In view of (\ref{potential v}) and Lemma \ref{Futaki comput}, it is direct to check that $X$ given by (\ref{equation X}) satisfies (\ref{0201}). Hence $X$ must be extremal by the uniqueness. To see that (\ref{equation X}) has a unique solution, it suffices to check that the matrix $(a_{ij})$ given by
$$a_{ij}={\frac1V}\int_{2P_+}y_iy_j\pi(y)\,dy-\mathbf b_i\mathbf b_j$$
is invertible. In fact, for any vector $v=(v^i)$, consider the convex function $f_v(y)=(v^iy_i)^2$. By Jensen inequality,
$$v^iv^ja_{ij}={\frac1V}\int_{2P_+}[v(y)]^2\pi(y)\,dy-[v(\mathbf b)]^2\geq 0,$$
with equality if and only if $f_v(y)$ is affine on $2P_+$.
However, this forces $v=0$, thus $(a_{ij})>0$. This completes the proof.
\end{proof}

\begin{prop}\label{Ding energy}
For $\phi\in\mathcal H_{K\times K}(\omega_0)$, the modified Ding functional is given by
$$\mathcal D_{X}(\phi)=\mathcal L_{X}(u_\phi)+\mathcal F(u_\phi)+const.,$$
where
\begin{eqnarray}
\mathcal L_{X}(u_\phi)&=&{\frac1V}\int_{2P_+}u_\phi(y)\pi(y)[1-\theta_X(y)]\,dy-u_\phi(4\rho),\label{L(u)}\\
\mathcal F(u_\phi)&=&-\log\left(\int_{\mathfrak a_+}e^{-\psi_\phi}\mathbf J(x)\,dx\right)+u_\phi(4\rho).\label{F(u)}
\end{eqnarray}
\end{prop}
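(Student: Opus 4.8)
The plan is to split the modified Ding functional into its linear and nonlinear parts, $\mathcal D_X(\phi)=\mathcal D^0_X(\phi)+\mathcal N(\phi)$ as in \eqref{linear}--\eqref{nonlinear}, and to match $\mathcal D^0_X$ with $\mathcal L_X(u_\phi)$ and $\mathcal N$ with $\mathcal F(u_\phi)$, up to a constant that is independent of $\phi$. The whole computation is a change of variables pushing everything from $M$ (or rather from $G$, via $K\times K$-invariance) down to the polytope $2P_+$ through the Legendre transform \eqref{Legendre} and the $KAK$-integral formula of Proposition \ref{KAK int}.

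For the nonlinear term $\mathcal N(\phi)=-\log\bigl(\tfrac1V\int_M e^{h_0-\phi}\omega_0^n\bigr)$, I would first observe that by \eqref{0202}-type reasoning the integrand satisfies $e^{h_0}\omega_0^n=e^{-\psi_\phi}\,dV_G$ on $G$ (recording $\psi_\phi=\psi_0+\phi$ and using the normalization of $h_0$), so that $\int_M e^{h_0-\phi}\omega_0^n=\int_G e^{-\psi_\phi}\,dV_G$. Applying Proposition \ref{KAK int} with $C_H=1$ converts this into $\int_{\mathfrak a_+}e^{-\psi_\phi}\mathbf J(x)\,dx$. This reproduces the $-\log(\cdot)$ in \eqref{F(u)}; the extra $+u_\phi(4\rho)$ term in $\mathcal F$ is exactly compensated by the $-u_\phi(4\rho)$ term appearing in $\mathcal L_X$, so after this cancellation the sum $\mathcal L_X+\mathcal F$ contains the genuine Legendre-transform integral plus the log term, with the constant $\tfrac1V$ and the fixed data of $\psi_0$ absorbed into the additive constant.

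For the linear term I would use Lemma \ref{0601}: connecting $0$ and $\phi$ by a geodesic $\{\phi_s\}$ in $\mathcal H_{K\times K}(\omega_0)$, the Legendre functions interpolate affinely, $u_{\phi_s}=(1-s)u_0+s\,u_\phi$, so $\dot u_{\phi_s}=u_\phi-u_0$ is $s$-independent. Because $\mathcal D^0_X$ is affine along geodesics by (2) of Lemma \ref{linear properties}, its value is just $\mathcal D^0_X(\phi)=\tfrac{d}{ds}\big|_{s=0}\mathcal D^0_X(\phi_s)=-\tfrac1V\int_M \dot\phi_0\,(1-\theta_X(\omega_0))\omega_0^n$, but more usefully one can evaluate the defining integral on the polytope. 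Transferring $\dot\phi_s(1-\theta_X)\omega^n_{\phi_s}$ to $\mathfrak a_+$ via \eqref{MA}, pushing forward under $\nabla\psi_{\phi_s}$ to $2P_+$ (where the Monge--Amp\`ere factor becomes $\pi(y)\,dy$ and $\theta_X(\omega_{\phi_s})$ becomes the affine function $\theta_X(y)=X^iy_i-X^i\mathbf b_i$ from \eqref{coefficients X}), and using that $\dot\phi_s$ corresponds under Legendre duality to $-\dot u_{\phi_s}=-(u_\phi-u_0)$ evaluated along the moving gradient, the $s$-integral collapses by the affine interpolation and yields $\tfrac1V\int_{2P_+}(u_\phi-u_0)(y)\,\pi(y)[1-\theta_X(y)]\,dy$. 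This is precisely $\mathcal L_X(u_\phi)$ modulo the $u_\phi(4\rho)$ boundary term and the $u_0$-part, which is a $\phi$-independent constant.

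The step I expect to be the main obstacle is the careful bookkeeping of the Legendre transform in the linear term: one must correctly identify $\dot\phi_s$ with the time-derivative of the Legendre dual and track how the factor $(1-\theta_X(\omega_{\phi_s}))$ transforms into $[1-\theta_X(y)]\pi(y)\,dy$ under the change of variables $y=\nabla\psi_{\phi_s}(x)$, while the reference fiber over which we integrate is itself moving with $s$. The cleanest way to handle this is to exploit the affine interpolation $u_{\phi_s}=(1-s)u_0+s\,u_\phi$ so that the $2P_+$ integral, taken with respect to the \emph{fixed} measure $\pi(y)[1-\theta_X(y)]\,dy$, has an $s$-derivative that is manifestly constant in $s$; integrating over $s\in[0,1]$ then gives the difference $u_\phi-u_0$ with no further Jacobian corrections. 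The appearance of $u_\phi(4\rho)$ is then seen to come from separating the linear-in-$y$ piece of $[1-\theta_X(y)]=1-X^iy_i+X^i\mathbf b_i$ against the normalization of the potential, matching the constant and the evaluation-at-$4\rho$ term; verifying that these boundary and normalization contributions assemble into exactly $\mathcal L_X+\mathcal F+const.$ is the delicate accounting that completes the proof.
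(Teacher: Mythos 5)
Your proposal is correct and follows essentially the same route as the paper: decompose $\mathcal D_X=\mathcal D^0_X+\mathcal N$, convert $\mathcal N$ via $e^{h_0}\omega_0^n=\mathrm{const}\cdot e^{-\psi_0}\,dV_G$ and the $KAK$-formula, and reduce $\mathcal D^0_X$ by pushing the integral to $2P_+$ through $y=\nabla\psi_{\phi_s}$, where $\dot u_s=-\dot\psi_s$ against the fixed measure $[1-\theta_X(y)]\pi(y)\,dy$ collapses the $s$-integral to $u_\phi-u_0$. Your detour through geodesics and Lemma \ref{0601} is harmless but unneeded (the paper uses an arbitrary path, since the collapse works pointwise in $y$), and the $\pm u_\phi(4\rho)$ terms are, as you first noted, pure bookkeeping that cancels in the sum rather than arising from the computation.
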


\begin{proof}
By (\ref{0203}), Proposition \ref{KAK int} and (\ref{MA}), it follows\footnote{Since we have assumed $C_H=1$, it follows $V:=\int_M\omega_0^n=\int_{2P_+}\pi(y)\,dy$ by Proposition \ref{KAK int}. Similarly $\int_{2P_+}[1-\theta_X(y)]\pi(y)\,dy=V$.}
\begin{eqnarray}
\mathcal D^0_{X}(\phi)&=&-{\frac1V}\int_0^1\int_{\mathfrak a_+}\dot\phi_s[1-\theta_X(\omega_{\phi_s})]\det(\psi_{\phi_s,ij})\prod_{\alpha\in\Phi_+}\langle\alpha,\nabla\psi_{\phi_s}\rangle^2dx\wedge ds\notag\\
&&+const.,\label{0409}\\
\mathcal N(\phi)&=&-\log\left({\frac1V}\int_{\mathfrak a_+}e^{-\psi_{\phi}}\mathbf J(x)\,dx\right).\notag
\end{eqnarray}
By differetiation with Legendre transformations, we have
$\dot u_s(y_s(x))=-\dot\psi_s(x)$.
Then by \eqref{0409}, $\mathcal D^0(\phi)$ equals
\begin{eqnarray*}
\int_0^1\int_{2P_+}\dot u_s[1-\theta_X(y)]\pi(y)\,dy\wedge ds=\int_{2P_+} u_\phi[1-\theta_X(y)]\pi(y)\,dy\wedge ds+const.
\end{eqnarray*}
The proposition is proved.
\end{proof}

\subsection{The linear part}\label{linear}
In this part, we deal with the linear part $\mathcal L _X(\cdot)$. First, we introduce the spaces of normalized functions.
Let $O$ be the origin of $\mathfrak a^*$. Note that $\mathfrak a^*_t$ is the fixed point set of the $W$-action. Thus $\nabla u(O)\in\mathfrak a^*_t$ for any $u\in\mathcal C_{W}$. We normalize $u\in\mathcal C_{W}$ by
\begin{eqnarray*}
\hat u(y)=u(y)-\langle\nabla u(O),y\rangle-u(O).
\end{eqnarray*}
Clearly $\hat u\in\mathcal C_{W}$ and
\begin{align}\label{normalization-u}\min_{2P}\hat u=\hat u(O)=0.
\end{align}
The subset of normalized functions in $\mathcal C_{W}$  will be denoted by $\hat {\mathcal C}_{W}$.

\begin{prop}\label{linear proper thm}
Under the assumption $c_X>0$ and (\ref{bar condition}), there exists a constant $\lambda>0$ such that
\begin{eqnarray}\label{linear prop}
\mathcal L_{X}(u)\geq \lambda\int_{2P_+}u\pi(y)[1-\theta_X(y)]\,dy,~\forall u\in\hat{\mathcal C}_W.
\end{eqnarray}
\end{prop}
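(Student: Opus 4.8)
The plan is to reduce the desired inequality \eqref{linear prop} to a geometric condition on the barycentre, and to exploit the normalization \eqref{normalization-u} together with the convexity of $u\in\hat{\mathcal C}_W$. Writing $d\mu:=\pi(y)[1-\theta_X(y)]\,dy$ for the positive measure on $2P_+$ (positive precisely because $c_X>0$ forces $1-\theta_X(y)>0$ there), the claim is $\mathcal L_X(u)\geq\lambda\int_{2P_+}u\,d\mu$, i.e.
\begin{eqnarray*}
\left(\tfrac1V-\lambda\right)\int_{2P_+}u(y)\,d\mu\geq u(4\rho).
\end{eqnarray*}
Since $u$ is $W$-invariant and convex and attains its minimum value $0$ at the origin $O$ by \eqref{normalization-u}, the first step is to compare $u(4\rho)$ with a linear functional of $u$. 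The natural tool is convexity: for a convex function, the value at a point is dominated by an affine lower support combined with the behaviour along a ray. I would write $4\rho$ as lying on the segment from $O$ into $2P_+$ and use $u(O)=0$ together with monotonicity of $u$ along rays out of its minimum to control $u(4\rho)$ from above by an integral average of $u$.

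The key step is therefore to produce, from the hypothesis $\mathbf b_X-4\rho\in\Xi$, a bound of the form $u(4\rho)\leq (\tfrac1V-\lambda)\int_{2P_+}u\,d\mu$ valid uniformly over $\hat{\mathcal C}_W$. First I would observe that $\mathbf b_X=\tfrac1V\int_{2P_+}y\,d\mu$ is exactly the barycentre of $2P_+$ with respect to $d\mu$ (matching the definition in Theorem \ref{bar condition thm}). By convexity of $u$ and Jensen's inequality applied to the probability measure $\tfrac1V\,d\mu$,
\begin{eqnarray*}
u(\mathbf b_X)\leq\frac1V\int_{2P_+}u(y)\,d\mu.
\end{eqnarray*}
It then remains to compare $u(4\rho)$ with $u(\mathbf b_X)$. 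Here the condition $\mathbf b_X-4\rho\in\Xi$ enters: $\mathbf b_X-4\rho$ lies in the open cone generated by $\Phi_+$, so $4\rho$ lies strictly between $O$ and $\mathbf b_X$ along a suitable direction within the positive chamber. Using that $\hat u$ is convex, $W$-invariant, and nonnegative with $\hat u(O)=0$, I would show that moving the evaluation point from $\mathbf b_X$ toward $O$ (i.e. to $4\rho$) strictly decreases the upper bound, producing the gain $\lambda>0$. Concretely, convexity along the ray through $O$ gives $u(4\rho)\leq \tfrac{|4\rho|}{|\mathbf b_X|}\,u(\mathbf b_X)$ in the relevant direction, and the strict containment in the open cone $\Xi$ makes the ratio bounded away from $1$, uniformly in $u$.

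The main obstacle I anticipate is making the last comparison \emph{uniform} over all $u\in\hat{\mathcal C}_W$ and \emph{quantitative} in $\lambda$, because $u$ need not be radial and the relevant ray from $O$ to $\mathbf b_X$ need not pass through $4\rho$ exactly. To handle this I would not compare along a single ray but instead use the support-function/separating-hyperplane characterization of $\Xi$: since $\mathbf b_X-4\rho\in\Xi$, there is a strict inequality $\langle\alpha,\mathbf b_X-4\rho\rangle>0$ after pairing with the dual generators, which I would convert into a genuine interior condition placing $4\rho$ in the interior of the region where the averaging inequality has slack. The convexity of $u$ combined with its minimum at $O$ then yields a dimension- and polytope-dependent constant $\lambda>0$ depending only on the position of $4\rho$ relative to $\mathbf b_X$ and $2P_+$, not on $u$ itself; extracting this constant cleanly, rather than obtaining merely a non-strict $\lambda=0$ estimate, is the delicate point and is precisely where the openness of the cone $\Xi$ (as opposed to its closure) is indispensable.
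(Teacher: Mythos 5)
Your first step is sound and is in fact the same move the paper makes: with $d\mu=\pi(y)[1-\theta_X(y)]\,dy$ one has $\tfrac1V\int_{2P_+}d\mu=1$ and $\mathbf b_X=\tfrac1V\int_{2P_+}y\,d\mu$, so Jensen's inequality $u(\mathbf b_X)\leq\tfrac1V\int_{2P_+}u\,d\mu$ is precisely the paper's supporting-hyperplane inequality at $\mathbf b_X$ (its (\ref{0402})--(\ref{0403})). The genuine gap is in the second half: the uniform estimate $u(4\rho)\leq(1-\epsilon)\,u(\mathbf b_X)$ for all $u\in\hat{\mathcal C}_W$, with $\epsilon>0$ independent of $u$, is exactly the content of the proposition, and you never prove it. The one concrete mechanism you offer, $u(4\rho)\leq\tfrac{|4\rho|}{|\mathbf b_X|}\,u(\mathbf b_X)$, is false unless $4\rho$ lies on the segment from $O$ to $\mathbf b_X$ (you concede this), and the fallback paragraph is a restatement of the goal rather than an argument. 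There is also a duality slip: membership of $\mathbf b_X-4\rho$ in $\Xi$ is \emph{not} tested by $\langle\alpha,\mathbf b_X-4\rho\rangle>0$ for $\alpha\in\Phi_+$; the dual cone of the cone spanned by $\Phi_+$ is $\bar{\mathfrak a}_+\cap\mathfrak a_{ss}$, so the correct characterization is that the $\mathfrak a_t^*$-component vanishes and $\langle\xi,\mathbf b_X-4\rho\rangle>0$ for every nonzero dominant $\xi\in\mathfrak a_{ss}$. Relatedly, the fact any proof must use -- that a $W$-invariant convex function has dominant gradient on the positive chamber, so that $\langle\nabla u(4\rho),\mathbf b_X-4\rho\rangle\geq0$ -- appears nowhere in your outline.

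For contrast: the paper avoids any uniform pointwise comparison by a compactness/contradiction argument. It normalizes $\int_{2P_+}u_k\,d\mu=1$, assumes $\mathcal L_X(u_k)\to0$, extracts a locally uniformly convergent subsequence, and uses the two nonnegative terms in (\ref{0403}) separately: the first forces the limit $u_\infty$ to be affine, the normalization at the interior point $O$ kills its $\mathfrak a_t$-part, and the strict pairing of $\bar{\mathfrak a}_+\cap\mathfrak a_{ss}$ with $\mathbf b_X-4\rho\in\Xi$ kills its $\mathfrak a_{ss}$-part, giving $u_\infty\equiv0$ and a contradiction; no explicit $\lambda$ is produced. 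Your route can actually be completed, and would then be genuinely different (and quantitative): write $u=\sup_i\ell_i$ over supporting affine functions, replace each $\ell_i$ by the admissible wedge $v_i(y)=\max\bigl(0,\max_{w\in W}\ell_i(wy)\bigr)\leq u$, observe that on $\bar{\mathfrak a}_+^*$ one has $v_i(y)=\max\bigl(0,\langle\xi_i^+,y\rangle+a_i\bigr)$ with $\xi_i^+$ dominant and $a_i\leq0$, and verify the ratio bound for wedges using $\langle\xi,\mathbf b_X-4\rho\rangle\geq c_0|\xi|$ on $\xi\in\bar{\mathfrak a}_+\cap\mathfrak a_{ss}$ (compactness of the unit sphere there, plus openness of $\Xi$) and $\langle\xi,\mathbf b_X\rangle\leq|\xi|\,|\mathbf b_X|$; taking suprema gives $u(4\rho)\leq\bigl(1-c_0/|\mathbf b_X|\bigr)u(\mathbf b_X)$ and hence (\ref{linear prop}) with the explicit constant $\lambda=c_0/(V|\mathbf b_X|)$. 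But as submitted, the crux step is missing, so the proposal does not yet constitute a proof.
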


\begin{proof}
Suppose the proposition is not true, then there's a sequence $\{u_k\}\subset\hat{\mathcal C_W}$
such that
\begin{eqnarray}\label{04 assumption}
\begin{cases}
\mathcal L_{X}(u_k)\to 0,&\\
\int_{2P_+}u_k\pi(y)[1-\theta_X(y)]\,dy=1.&
\end{cases}
\end{eqnarray}
By $c_X>0$ and the argument of \cite[Lemma 6.1]{LZZ}, the second equality  implies  there is a subsequence(still denoted by $\{u_k\}$) which converges locally uniformly to some $u_{\infty}\in\hat{\mathcal C}_W$.

For any $u\in\mathcal C$, by convexity, we have
\begin{eqnarray}\label{0402}
u-\langle\nabla u(\mathbf b_X),y-\mathbf b_X\rangle-u(\mathbf b_X)\geq0,
\end{eqnarray}
thus
\begin{eqnarray}\label{0403}
\mathcal L_{X}(u)&=&{\frac1V}\int_{2P_+}[u-\langle\nabla u(\mathbf b_X),y-\mathbf b_X\rangle-u(\mathbf b_X)]\pi(y)[1-\theta_X(y)]\,dy\notag\\
&&+{\frac1V}\int_{2P_+}[\langle\nabla u(\mathbf b_X),y-\mathbf b_X\rangle+u(\mathbf b_X)]\pi(y)[1-\theta_X(y)]\,dy-u(4\rho)\notag\\
&=&{\frac1V}\int_{2P_+}[u-\langle\nabla u(\mathbf b_X),y-\mathbf b_X\rangle-u(\mathbf b_X)]\pi(y)[1-\theta_X(y)]\,dy+u(\mathbf b_X)- u(4\rho)\notag\\
&\geq&{\frac1V}\int_{2P_+}[u-\langle\nabla u(\mathbf b_X),y-\mathbf b_X\rangle-u(\mathbf b_X)]\pi(y)[1-\theta_X(y)]\,dy\notag\\
&&+\langle\nabla u(4\rho),\mathbf b_X-4\rho\rangle\geq0,
\end{eqnarray}
where the last inequality follows from (\ref{bar condition}), (\ref{0402}) and the fact that $\nabla u(4\rho)\in\mathfrak a_+$. Applying the above inequality to $u_k$, by (\ref{04 assumption}), we have
\begin{eqnarray}\label{0404}
0&\leq& \int_{2P_+}[u_k-\langle\nabla u_k(\mathbf b_X),y-\mathbf b_X\rangle-u_k(\mathbf b_X)]\pi(y) [1-\theta_X(y)]\,dy\to 0, \label{0404} \\
0&\leq&\langle\nabla u_k(4\rho),\mathbf b_X-4\rho\rangle\to 0. \label{0404s}
\end{eqnarray}
By \eqref{0404}, we see that $u_\infty$ must be affine linear. Since $u_k(O)=0$, we have $u_\infty(y)=\xi^iy_i$ for some $(\xi^i)\in\bar{\mathfrak a}_+$. Since $u_\infty$ is normalized and $O$ lies  in the interior of $2P_+\cap\mathfrak a_+^*$, it holds $\xi\in\mathfrak a_{ss}$. Otherwise $u_\infty$ is not nonnegative. Substituting $u_\infty$ into \eqref{0404s}, we see that
$\langle\xi,\mathbf b_X-4\rho\rangle=0$.
But $\xi\in\bar{\mathfrak a}_+$ and $\mathbf b_X-4\rho\in\Xi$. Hence $\xi^i=0$ and consequently $u_\infty(y)\equiv0$.

Since $u_k(4\rho)\to u_\infty(4\rho)=0$, by \eqref{L(u)}
and the second line of (\ref{04 assumption}), we have
$\mathcal L_{X}(u_k)\to 1$
by the second line of (\ref{04 assumption}), which is a contradiction. Thus the proposition is proved.
\end{proof}

Yao use (\ref{linear prop}) to define the "uniform relative Ding stablity"  in toric case \cite{Yao}. In \cite{Yao}, it is shown the condition $c_X>0$ is a necessary condition of (\ref{linear prop}). Since the arguments of \cite{Yao} can be generalized to group compactifications with no difficulties, we omit the details.
\begin{prop}
Inequality (\ref{linear prop}) can not hold if $c_X\leq0$.
\end{prop}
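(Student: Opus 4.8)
The plan is to argue by contradiction: assuming (\ref{linear prop}) holds for some $\lambda>0$, I would produce, for every such $\lambda$, a normalized test function $u\in\hat{\mathcal C}_W$ violating it, the violation being forced by the failure of the weight $1-\theta_X(y)$ to remain positive once $c_X\le 0$. The first move is to rewrite the linear functional using (\ref{L(u)}) as $\mathcal L_{X}(u)=\frac1V\int_{2P_+}u\,d\mu_X-u(4\rho)$, where $d\mu_X:=\pi(y)[1-\theta_X(y)]\,dy$. Then (\ref{linear prop}) is equivalent to the single scalar inequality $(\frac1V-\lambda)\int_{2P_+}u\,d\mu_X\ge u(4\rho)$ holding for all $u\in\hat{\mathcal C}_W$, which is the form I would test against.

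The key structural input is that $X\in\mathfrak z(\mathfrak g)$, so by Lemma \ref{coefficients X} the function $\theta_X(y)=X^iy_i-X^i\mathbf b_i$ is $W$-invariant and affine, depending only on the central direction. Hence $1-\theta_X(y)=c-\langle X,y\rangle$ with $c=1+\langle X,\mathbf b\rangle$, and $c_X=\min_{\overline{2P}}(1-\theta_X)=c-s_{\max}$, where $s_{\max}=\max_{\overline{2P_+}}\langle X,\cdot\rangle$. Thus $c_X\le 0$ says precisely that $1-\theta_X\le 0$ on the region $\{\langle X,y\rangle\ge c\}\cap 2P_+$, i.e. $d\mu_X$ is no longer a positive measure. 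I would also record that $\langle X,4\rho\rangle=0$: since $\rho\in\mathfrak a^*_{ss}$ while $X\in\mathfrak a_t$ and these are orthogonal, the pairing vanishes. This lets me manufacture test functions that vanish at $4\rho$.

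For the main case $c_X<0$, I take $u(y)=g(\langle X,y\rangle)$ with $g$ smooth, convex, $g(0)=g'(0)=0$, and $g$ positive exactly for $s>a$, where $a\in(\max(0,c),s_{\max})$; such $a$ exists because $c_X<0$ forces $c<s_{\max}$. This $u$ is $W$-invariant (as $X\in\mathfrak a_t$), nonnegative, and normalized in the sense of (\ref{normalization-u}) with $u(4\rho)=g(0)=0$; strict convexity and smoothness up to $\overline{2P}$ are restored by adding $\epsilon v$ for a fixed $v\in\hat{\mathcal C}_W$, which perturbs the relevant integrals by $O(\epsilon)$. Pushing $\pi(y)\,dy$ forward under $y\mapsto s=\langle X,y\rangle$ and using that $g$ is supported in $\{s>a>c\}$, where $c-s<0$, yields $\int_{2P_+}u\,d\mu_X<0$. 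Feeding this into the scalar inequality, for $\lambda<1/V$ the left side is negative while the right side is $\ge 0$, a contradiction; for $\lambda\ge 1/V$ the same contradiction comes from any $u\in\hat{\mathcal C}_W$ with $u(4\rho)>0$ and $\int u\,d\mu_X>0$, which exist since $d\mu_X$ has total mass $V>0$. This rules out every $\lambda>0$.

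The main obstacle, and the step where I expect to invoke Yao's toric argument rather than reprove it, is the borderline case $c_X=0$: there $1-\theta_X\ge 0$ vanishes only on the face $\{\langle X,y\rangle=s_{\max}\}$, so $d\mu_X$ is still a nonnegative measure, and the functions $g(\langle X,y\rangle)$ above only certify the critical ratio $1/V$ rather than a strictly violating value. Handling this requires a degenerating family of normalized convex functions concentrated near that face, arranged so that $u(4\rho)$ is not controlled by the vanishing weighted mass $\int u\,d\mu_X$; this concentration estimate is the delicate point, and its details are those taken from \cite{Yao}. The remaining checks—membership of the perturbed test functions in $\hat{\mathcal C}_W$ and continuity of $\mathcal L_X(\cdot)$ and $\int(\cdot)\,d\mu_X$ under the approximations—follow routinely from the linearity of both functionals.
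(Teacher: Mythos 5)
Your case analysis for $c_X<0$ is sound in outline, and it is essentially the natural transplant of Yao's toric test-function computation: since $\alpha(X)=0$ for all $\alpha\in\Phi$ one indeed has $\langle X,4\rho\rangle=0$, so the hinge functions $g(\langle X,y\rangle)$ are $W$-invariant, normalized, vanish at $4\rho$, and have $\int_{2P_+}u\,d\mu_X<0$ for $d\mu_X:=\pi(y)[1-\theta_X(y)]\,dy$, which kills every $\lambda<1/V$; a second test function with $u(4\rho)>0$ and $\int u\,d\mu_X>0$ kills $\lambda\ge 1/V$. (Note the paper itself gives no proof at all here --- it only cites \cite{Yao} --- so your write-up is already more detailed than the source.) Two repairs are needed, both minor. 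First, $g(\langle X,y\rangle)+\epsilon v$ does \emph{not} lie in $\hat{\mathcal C}_W$: membership requires $u-u_G\in C^{\infty}(\overline{2P})$, i.e.\ the Guillemin boundary singularity with coefficient exactly one, and your perturbation carries it with coefficient $\epsilon$. The standard fix is to test (\ref{linear prop}) with the normalization of $u_G+jg$ and use that both sides are positively homogeneous and continuous under uniform convergence, letting $j\to\infty$. Second, "which exist since $d\mu_X$ has total mass $V>0$" is not an argument; existence is true but needs a construction, e.g.\ a hinge of $-\langle X,y\rangle$ supported in $\{1-\theta_X\ge\delta\}$ (nonempty since $\max(1-\theta_X)\ge\frac1V\int(1-\theta_X)\pi\,dy/\!\!\int\pi\,dy\cdot V=1$) plus a small multiple of $|y|^2$, again pushed into $\hat{\mathcal C}_W$ as above; note this branch genuinely uses $4\rho\neq O$, i.e.\ $\Phi_+\neq\emptyset$.

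The genuine gap is the borderline case $c_X=0$, and it is worse than a deferred citation. When $c_X=0$ the measure $d\mu_X$ is nonnegative, so \emph{every} normalized $u$ with $u(4\rho)=0$ satisfies $(\tfrac1V-\lambda)\int u\,d\mu_X\ge 0=u(4\rho)$ for all $\lambda\le 1/V$; and every member of your proposed degenerating family concentrated near the face $\{\langle X,y\rangle=s_{\max}\}$ automatically has $u(4\rho)=0$, because $\langle X,4\rho\rangle=0<s_{\max}$. So face concentration can never violate (\ref{linear prop}) for any $\lambda<1/V$ --- both sides degenerate at the same rate, which is exactly the "critical ratio $1/V$" phenomenon you observed. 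The reason Yao's argument works in the toric setting is that his uniform stability is measured against a norm \emph{not} weighted by $1-\theta_X$, where concentration at the zero face makes the weighted functional vanish faster than the norm; at $c_X=0$ the weighted and unweighted norms are inequivalent, so his argument does not transfer "routinely" to the inequality as stated in (\ref{linear prop}). To defeat small $\lambda$ with the weighted norm one needs normalized convex $u$ with $u(4\rho)>(\tfrac1V-\lambda)\int u\,d\mu_X$, a comparison your proposal never establishes; in fact a midpoint-convexity estimate over a thin tube around the segment from $O$ through $4\rho$ (where $\langle X,y\rangle\equiv 0$, so the weight is bounded below) yields $u(4\rho)\le C\int u\,d\mu_X$, showing such test functions are quantitatively constrained and must be produced, if at all, by a different mechanism. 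This step is therefore a real gap in the proposal, not a delicate detail safely outsourced to \cite{Yao} --- although, in fairness, the paper disposes of the entire proposition with the same one-line reference.
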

%\begin{proof}
%\emph{Case 1: $c_X<0$.} Since $X\in\mathfrak z(\mathfrak g)$, by (\ref{coefficients X}), $\theta_X(4\rho)=\theta_X(O)$, by (\ref{0405})
%\begin{eqnarray*}

%\end{eqnarray*}
%\end{proof}

\subsection{Sufficiency}\label{sufficiency}
We first show the sufficient part of Theorem \ref{bar condition thm} by using Theorem \ref{0301}.
It suffices to prove the following theorem.
\begin{thm}\label{proper}
If $c_X>0$ and (\ref{bar condition}) holds, then the modified Ding functional is proper modulo $Z(G)$. Consequently, $M$ admits Mabuchi metrics by Theorem \ref{0301}.
\end{thm}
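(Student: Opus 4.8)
The goal is to show that, under the hypotheses $c_X>0$ and $\mathbf b_X-4\rho\in\Xi$, the modified Ding functional $\mathcal D_X$ is proper modulo the center $Z(G)$, so that Theorem \ref{0301} applies. By Proposition \ref{Ding energy} we have reduced $\mathcal D_X(\phi)$ to $\mathcal L_X(u_\phi)+\mathcal F(u_\phi)$ up to a constant, where $u_\phi\in\mathcal C_W$ is the Legendre dual. The linear part $\mathcal L_X$ has already been controlled by Proposition \ref{linear proper thm}, which gives the lower bound $\mathcal L_X(u)\geq\lambda\int_{2P_+}u\,\pi(y)[1-\theta_X(y)]\,dy$ for normalized $u\in\hat{\mathcal C}_W$. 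So the first task is to handle $\mathcal F$ and then assemble a coercivity estimate in terms of $I_X-J_X$.

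\emph{Step 1 (the nonlinear part $\mathcal F$).} I would first show $\mathcal F(u_\phi)\geq -C$ is bounded below, independently of $\phi$. Since $\mathcal F(u_\phi)=-\log\bigl(\int_{\mathfrak a_+}e^{-\psi_\phi}\mathbf J(x)\,dx\bigr)+u_\phi(4\rho)$ and $4\rho\in\mathrm{Int}(2P_+)$ by the Fano hypothesis (as used in Lemma \ref{Futaki comput}), the key is the pointwise estimate $\psi_\phi(x)\geq u_\phi(4\rho)-\langle 4\rho,x\rangle+\langle\nabla\psi_\phi(x),x\rangle-\cdots$ coming from the Legendre relation \eqref{Legendre}; more directly, $\langle\nabla\psi_\phi(x),4\rho\rangle-\psi_\phi(x)\leq u_\phi(4\rho)$ by the definition of the Legendre transform. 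Combining this with $\mathbf J(x)\leq e^{4\rho(x)}$ (from $\mathbf J(x)=e^{4\rho(x)}\prod_{\alpha}((1-e^{-2\alpha(x)})/2)^2$) yields $e^{-\psi_\phi}\mathbf J(x)\leq e^{u_\phi(4\rho)}e^{(4\rho-\nabla\psi_\phi(x))\cdot\text{(something)}}$, so the integral is dominated by an integral over the bounded polytope $2P_+$ after the change of variables $y=\nabla\psi_\phi(x)$. This gives $\mathcal F(u_\phi)\geq-C$ with $C$ depending only on $P$ and $\rho$.

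\emph{Step 2 (coercivity in $I_X-J_X$).} With $\mathcal F$ bounded below, properness follows if I can show $\mathcal L_X(u_\phi)\gtrsim I_X(\phi)-J_X(\phi)$ modulo the center action. By Proposition \ref{linear proper thm}, $\mathcal L_X(\hat u_\phi)\geq\lambda\int_{2P_+}\hat u_\phi\,\pi(y)[1-\theta_X(y)]\,dy$, and this right-hand integral should be identified, via the Legendre/change-of-variables dictionary already set up in Proposition \ref{Ding energy}, with a multiple of $I_X(\phi)-J_X(\phi)$ up to an action by $Z(G)$. The point is that the normalization $\hat u_\phi(y)=u_\phi(y)-\langle\nabla u_\phi(O),y\rangle-u_\phi(O)$ corresponds exactly to translating $\phi$ by an element of the center $Z(G)$ (since $\nabla u(O)\in\mathfrak a_t^*$ and $\mathfrak a_t=\mathfrak z(\mathfrak g)\cap\mathfrak a$), so that passing from $u_\phi$ to $\hat u_\phi$ is precisely replacing $\phi$ by $\phi_\sigma$ for a suitable $\sigma\in Z(G)$ achieving the infimum. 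Then by Lemma \ref{0308} (or the convexity established there) the infimum $\inf_{\sigma\in Z(G)}(I_X-J_X)(\phi_\sigma)$ is realized at this normalization, matching Definition \ref{prop def}.

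\emph{Main obstacle.} The hard part will be Step 2: making rigorous the identification between the ``moment-polytope'' energy $\int_{2P_+}\hat u_\phi\,\pi(y)[1-\theta_X(y)]\,dy$ and $I_X-J_X$ up to the center action, together with verifying that the normalization $u\mapsto\hat u$ coincides with minimizing over $Z(G)$. I would establish this by the same Legendre-transform bookkeeping as in \cite{LZZ} and in the proof of Proposition \ref{Ding energy}, comparing $\int\hat u_\phi\,\pi[1-\theta_X]\,dy$ with $\int_0^1\!\int\dot u_s\,\pi[1-\theta_X]\,dy\,ds$ and using the monotonicity/affinity properties of $\mathcal D^0_X$ from Lemma \ref{linear properties}. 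A subtlety is to confirm that the lower bound in Proposition \ref{linear proper thm}, stated for $u\in\hat{\mathcal C}_W$, transfers to an increasing function $f$ of $I_X-J_X$ with $f\to+\infty$, which requires the quantitative compactness argument of \cite[Lemma 6.1]{LZZ} to upgrade local-uniform convergence into the needed coercivity rate; once that is in place, combining Steps 1 and 2 yields $\mathcal D_X(\phi)\geq\lambda'\inf_{\sigma\in Z(G)}(I_X-J_X)(\phi_\sigma)-C'$, which is the desired properness modulo $Z(G)$.
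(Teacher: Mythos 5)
Your Step 1 contains a fatal error: $\mathcal F$ is \emph{not} bounded below on $\hat{\mathcal C}_W$, and the estimate you sketch cannot produce such a bound. The Legendre inequality $\langle 4\rho,x\rangle-\psi_\phi(x)\leq u_\phi(4\rho)$ only gives $e^{-\psi_\phi}\mathbf J(x)\leq e^{u_\phi(4\rho)}\prod_{\alpha\in\Phi_+}\bigl(\tfrac{1-e^{-2\alpha(x)}}{2}\bigr)^2$, and the right-hand side does not decay at infinity in $\mathfrak a_+$ (the product tends to a positive constant), so the integral is not dominated; and you cannot pass to an integral over the bounded polytope $2P_+$ by the change of variables $y=\nabla\psi_\phi(x)$ without picking up the Monge--Amp\`ere Jacobian $\det D^2\psi_\phi$, which is exactly what is uncontrolled. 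Concretely, take (say in the toric-type picture with $O=4\rho$) a sequence $u_R\in\hat{\mathcal C}_W$ behaving like $R|y-4\rho|$ near $4\rho$: then $\tilde\psi_R:=\psi_R-4\rho_ix^i$ is nearly flat at its minimum on a region of diameter $\sim R$, so by (\ref{0406}) one gets $\mathcal F(u_R)\approx -n\log R\to-\infty$, while $\mathcal L_X(u_R)$ grows linearly in $R$. Properness holds only because the linear part beats the logarithmic decay of $\mathcal F$; any argument that treats $\mathcal F$ as uniformly bounded below misses the entire difficulty (indeed, if your Step 1 were true, Proposition \ref{proper mod thm} would follow in one line from Proposition \ref{linear proper thm}, and none of the machinery of Section \ref{sufficiency} would be needed).

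The paper handles this balance differently, and you would need to do something equivalent. It introduces the auxiliary density $A(y)$ built from the Ricci potential $h_0$ and the functional $\mathcal D_A=\mathcal D^0_A+\mathcal N$; since $\mathcal D^0_A$ is affine along the geodesics of Lemma \ref{0601} and $\mathcal N$ is convex by Berndtsson, and $u_0$ is a critical point, one gets the uniform bound $\mathcal D_A(u)\geq\mathcal D_A(u_0)$ --- this is the substitute for your false lower bound on $\mathcal F$, since $\mathcal D_A=\mathcal L_A+\mathcal F$. Then the difference $|\mathcal L_X(u)-\mathcal L_A(u)|$ is absorbed using Proposition \ref{linear proper thm} with a margin $\delta$, and the mismatch of scales is repaired by the dilation inequality $\mathcal F(u)\geq\mathcal F\bigl(\tfrac{u}{1+c}\bigr)+n\log(1+c)$ of Lemma \ref{non-linear lem}; assembling these yields Proposition \ref{proper mod thm}. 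Your Step 2 is closer to the mark: the $Z(G)$-normalization $u\mapsto\hat u$ does correspond to replacing $\phi$ by $\phi_\sigma$, $\sigma\in Z(G)$, with $\mathcal D_X$ unchanged (Lemma \ref{invariant under normalization}), and the polytope energy is compared to $J_X(\hat\phi)$ up to an additive constant by Lemma \ref{609}, after which (\ref{0204}) converts $J_X$ into $I_X-J_X$; note that since $\hat\phi$ is one particular $\phi_\sigma$ and $f$ in Definition \ref{prop def} is increasing, no minimization claim is needed --- in particular your appeal to Lemma \ref{0308} is misplaced, as that lemma concerns solutions $\phi_t$ of the continuity path (\ref{0302}) and variations along holomorphic vector fields, not arbitrary potentials.
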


First we have the following lemma on non-linear part.
\begin{lem}\label{non-linear lem}
For any $\phi\in\mathcal H_{K\times K}(\omega_0)$, let $$\tilde\psi_\phi:=\psi_\phi-4\rho_ix^i,~x\in\mathfrak a_+.$$
Then
\begin{eqnarray}\label{0406}
\mathcal F(u_\phi)=-\log\left(\int_{\mathfrak a_+}e^{-(\tilde\psi_\phi-\inf_{\mathfrak a_+}\tilde\psi_\phi)}\prod_{\alpha\in\Phi_+}\left(\frac{1-e^{-2\alpha_ix^i}}{2}\right)^2dx\right).
\end{eqnarray}
Consequently, for any $c>0$,
\begin{eqnarray}\label{0407}
\mathcal F(u_\phi)\geq \mathcal F\left(\frac{u_\phi}{1+c}\right)+n\cdot\log(1+c).
\end{eqnarray}
\end{lem}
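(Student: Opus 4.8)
The plan is to establish \eqref{0406} first by a direct change of variables, and then to deduce the inequality \eqref{0407} from the resulting expression via the concavity of $\log$ together with a scaling argument. First I would start from the definition \eqref{F(u)}, namely $\mathcal F(u_\phi)=-\log(\int_{\mathfrak a_+}e^{-\psi_\phi}\mathbf J(x)\,dx)+u_\phi(4\rho)$. Using the factorization $\mathbf J(x)=\prod_{\alpha\in\Phi_+}\sinh^2\alpha(x)=e^{4\rho(x)}\prod_{\alpha\in\Phi_+}\bigl(\tfrac{1-e^{-2\alpha(x)}}{2}\bigr)^2$, which is exactly the identity already used in the proof of Lemma \ref{Futaki comput}, and recalling $\rho=\tfrac12\sum_{\alpha\in\Phi_+}\alpha$ so that $\sum_{\alpha\in\Phi_+}2\alpha(x)=4\rho(x)=4\rho_ix^i$, I would absorb the factor $e^{4\rho(x)}$ into the exponent to convert $e^{-\psi_\phi}\mathbf J(x)$ into $e^{-(\psi_\phi-4\rho_ix^i)}\prod_{\alpha\in\Phi_+}\bigl(\tfrac{1-e^{-2\alpha_ix^i}}{2}\bigr)^2=e^{-\tilde\psi_\phi}\prod_{\alpha\in\Phi_+}\bigl(\tfrac{1-e^{-2\alpha_ix^i}}{2}\bigr)^2$. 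The remaining bookkeeping is to match the constant $u_\phi(4\rho)$ against $\inf_{\mathfrak a_+}\tilde\psi_\phi$; here I would use the Legendre duality \eqref{Legendre}, which gives $u_\phi(\nabla\psi_\phi(x))=\langle x,\nabla\psi_\phi(x)\rangle-\psi_\phi(x)$, so that the value $u_\phi(4\rho)$ is realized at the point $x_0\in\mathfrak a_+$ with $\nabla\psi_\phi(x_0)=4\rho$, at which $\nabla\tilde\psi_\phi(x_0)=0$, i.e. $x_0$ is the minimizer of the convex function $\tilde\psi_\phi$. Evaluating $\tilde\psi_\phi(x_0)=\psi_\phi(x_0)-4\rho_ix_0^i=-u_\phi(4\rho)$ then identifies $u_\phi(4\rho)=-\inf_{\mathfrak a_+}\tilde\psi_\phi$, and substituting this into the logarithm yields \eqref{0406}.

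For the inequality \eqref{0407}, I would work from the form \eqref{0406} and exploit that $\tilde\psi_{\phi}-\inf\tilde\psi_{\phi}\geq 0$ with the normalization that it vanishes at its minimizer. The key observation is that replacing $u_\phi$ by $u_\phi/(1+c)$ corresponds, under Legendre transform, to a controlled dilation of the $x$-variable: since Legendre transform sends $u\mapsto u/(1+c)$ to a rescaling of $\psi$, one checks that $\tilde\psi_{\phi/(1+c)}(x)-\inf\tilde\psi_{\phi/(1+c)}$ equals $(1+c)^{-1}$ times $(\tilde\psi_\phi-\inf\tilde\psi_\phi)$ evaluated at the dilated point $(1+c)x$ (at least after accounting for the linear $4\rho$-shift, which is homogeneous of degree one and hence commutes with the scaling). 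Performing the change of variables $x\mapsto(1+c)x$ in the integral defining $\mathcal F(u_\phi/(1+c))$ then produces a Jacobian factor $(1+c)^{-n}$ (as $\dim\mathfrak a_+=n$ after including the root directions, matching the $n$ in the statement) together with a reweighting of the product $\prod_\alpha\bigl(\tfrac{1-e^{-2\alpha_ix^i}}{2}\bigr)^2$; since each factor $(1-e^{-2\alpha(x)})/2$ is increasing in $x$ along $\mathfrak a_+$, dilating by $(1+c)>1$ only increases it, so the rescaled integral dominates the original up to the Jacobian, and taking $-\log$ flips this into the asserted lower bound $\mathcal F(u_\phi)\geq\mathcal F(u_\phi/(1+c))+n\log(1+c)$.

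I expect the main obstacle to be the scaling step in the second part: one must verify carefully how the normalization $\tilde\psi_\phi-\inf_{\mathfrak a_+}\tilde\psi_\phi$ transforms under $u_\phi\mapsto u_\phi/(1+c)$, because the subtraction of the linear term $4\rho_ix^i$ does not commute with the Legendre rescaling in an entirely transparent way, and the minimizer of $\tilde\psi_\phi$ moves. A clean route I would favor is to avoid tracking the minimizer explicitly and instead argue directly: since $\mathcal F$ depends on $u_\phi$ only through $\tilde\psi_\phi$ modulo additive constants, I would fix the representative and use the pointwise inequality $e^{-t/(1+c)}\geq e^{-t}$ for $t\geq 0$ on the normalized nonnegative function, combined with the monotonicity of the root factors under dilation, to bound the integral from below before applying $-\log$. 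The convexity/concavity input, i.e. that $v\mapsto\tilde\psi_{v}$ behaves well under the scaling of Legendre data (as recorded in Lemma \ref{0601} for geodesics), is what guarantees the clean factor $n\log(1+c)$ rather than an inequality with an uncontrolled error term.
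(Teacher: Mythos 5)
Your treatment of \eqref{0406} is correct and is essentially the paper's own argument: you factor $\mathbf J(x)=e^{4\rho(x)}\prod_{\alpha\in\Phi_+}\bigl(\tfrac{1-e^{-2\alpha(x)}}{2}\bigr)^2$ and use Legendre duality at the point $x^*$ with $\nabla\psi_\phi(x^*)=4\rho$ to identify $u_\phi(4\rho)=-\inf_{\mathfrak a_+}\tilde\psi_\phi$ (the existence of such $x^*$ uses $4\rho\in\mathrm{Int}(2P_+)$, which the paper records for Fano compactifications).

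The second half, however, contains genuine errors. First, the Jacobian count is wrong: the integral lives on the Weyl chamber $\mathfrak a_+$, whose dimension is $r$ (the rank), not $n$, so the dilation $x\mapsto(1+c)x$ only produces $(1+c)^{r}$. The remaining $n-r$ powers of $(1+c)$ must come from the root factors, of which there are $2\#\Phi_+=n-r$ after squaring, and here monotonicity is the wrong tool: after the change of variables these factors become $\tfrac12\bigl(1-e^{-\frac{2}{1+c}\alpha(x)}\bigr)$, which are \emph{smaller} than $\tfrac12\bigl(1-e^{-2\alpha(x)}\bigr)$, i.e.\ they move against you, and you need the concavity bound $1-e^{-t}\le(1+c)\bigl(1-e^{-t/(1+c)}\bigr)$ to control the loss. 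Second, and more seriously, your final sign is wrong. Carrying out your own scaling correctly: $\tilde\psi_c(x)-\inf\tilde\psi_c=\tfrac{1}{1+c}\bigl[\tilde\psi_\phi((1+c)x)-\inf\tilde\psi_\phi\bigr]$, so after the change of variables
\begin{equation*}
\mathcal F\Bigl(\tfrac{u_\phi}{1+c}\Bigr)=r\log(1+c)-\log\left(\int_{\mathfrak a_+}e^{-\frac{1}{1+c}(\tilde\psi_\phi-\inf_{\mathfrak a_+}\tilde\psi_\phi)}\prod_{\alpha\in\Phi_+}\Bigl(\tfrac{1-e^{-\frac{2}{1+c}\alpha_ix^i}}{2}\Bigr)^2dx\right),
\end{equation*}
and combining $e^{-(\tilde\psi_\phi-\inf\tilde\psi_\phi)}\le e^{-\frac{1}{1+c}(\tilde\psi_\phi-\inf\tilde\psi_\phi)}$ with the concavity bound gives
\begin{equation*}
\mathcal F(u_\phi)\ \ge\ \mathcal F\Bigl(\tfrac{u_\phi}{1+c}\Bigr)-n\log(1+c),
\end{equation*}
with a \emph{minus} sign.

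This is not something you can repair, because \eqref{0407} with $+\,n\log(1+c)$ is false as printed. Specialize to the toric case ($r=n$, $\Phi_+=\emptyset$, $\rho=0$): the claim becomes $\int e^{-f/(1+c)}\,dx\ge(1+c)^{2n}\int e^{-f}\,dx$ for every convex $f\ge0$ with $\min f=0$; taking $n=1$ and $f(x)=|x|$ (the model for potentials of linear growth) gives $2(1+c)\ge 2(1+c)^2$, a contradiction. The statement in the paper has a sign typo: the paper's own proof, modulo the same sloppiness, derives the minus version, and the only place the lemma is used --- the proof of Proposition \ref{proper mod thm}, where the term $-n\log(1+C)$ appears --- relies on the minus version. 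So your proof should target $\mathcal F(u_\phi)\ge\mathcal F(u_\phi/(1+c))-n\log(1+c)$; with the Jacobian corrected to $(1+c)^r$ and the concavity bound supplying the other $(n-r)\log(1+c)$, your change-of-variables strategy then goes through and coincides with the paper's argument.
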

\begin{proof}
Since $\psi_\phi$ is convex, so is $\tilde\psi_\phi$. Thus if $x^*\in\mathfrak a_+$ satisfies $\nabla\psi_\phi(x^*)=4\rho$, then
$$\tilde\psi_\phi(x)\geq\tilde\psi_\phi(x^*)=\inf_{\mathfrak a_+}\tilde\psi_\phi.$$
By the definition of Legendre transformation, we have
\begin{eqnarray*}
\psi_\phi(x)+u_\phi(4\rho)=\psi_\phi(x)+4x^{*i}\rho_i-\psi_\phi(x^*)
=\psi_\phi(x)-\inf_{\mathfrak a_+}\tilde\psi_\phi.
\end{eqnarray*}
Substituting this into (\ref{F(u)}), it follows
\begin{eqnarray*}
\mathcal F(u_\phi)&=&-\log\left(\int_{\mathfrak a_+}e^{-(\psi_\phi+u_\phi(4\rho))}\mathbf J(x)dx\right)\notag\\
&=&-\log\left(\int_{\mathfrak a_+}e^{-(\tilde\psi_\phi-\inf_{\mathfrak a_+}\tilde\psi_\phi)}e^{-4\rho_ix^i}\mathbf J(x)dx\right)\notag\\
&=&-\log\left(\int_{\mathfrak a_+}e^{-(\tilde\psi_\phi-\inf_{\mathfrak a_+}\tilde\psi_\phi)}\prod_{\alpha\in\Phi_+}\left(\frac{1-e^{-2\alpha_ix^i}}{2}\right)^2dx\right).
\end{eqnarray*}
This proves (\ref{0406}).

Then we prove (\ref{0407}). For $u_c(y)={\frac1{1+c}}u(y)$, its Legendre function $\psi_c(x)={\frac1{1+c}}\psi((1+c)x)$ satisfies
$\tilde{\psi}_c(x)={\frac1{1+c}}\tilde\psi((1+c)x)$.
In particular,
$$-\inf_{\mathfrak a_+}\tilde{\psi}_c(x)=-{\frac1{1+c}}\inf_{\mathfrak a_+}\tilde\psi.$$
By the above relations and (\ref{0406}), one gets
\begin{eqnarray*}\label{0408}
\mathcal F(\hat u)&=&-\log\left(\int_{\mathfrak a_+}e^{-{\frac{1}{1+c}}(\tilde\psi_\phi((1+c)x)-\inf_{\mathfrak a_+}\tilde\psi_\phi)}\prod_{\alpha\in\Phi_+}\left(\frac{1-e^{-2\alpha_ix^i}}{2}\right)^2dx\right)\notag\\
&=&-\log\left(\int_{\mathfrak a_+}e^{-{\frac{1}{1+c}}(\tilde\psi_\phi(x)-\inf_{\mathfrak a_+}\tilde\psi_\phi)}\prod_{\alpha\in\Phi_+}\left(\frac{1-e^{-\frac2{1+c}\alpha_ix^i}}{2}\right)^2\, dx\right)+r\cdot\log(1+c).\notag\\
\end{eqnarray*}
Note that $\#\Phi=n-r$. Combining the above inequality and relations
\begin{eqnarray*}
\log(1+c)\geq\log(1-e^{-t})-\log(1-e^{\frac t{1+c}})\geq0,~\forall t,c\geq0
\end{eqnarray*}
and
\begin{eqnarray*}
\mathcal F(u)\geq-\log\left(\int_{\mathfrak a_+}e^{-{\frac{1}{1+c}}(\tilde\psi_\phi(x)-\inf_{\mathfrak a_+}\tilde\psi_\phi)}\prod_{\alpha\in\Phi_+}\left(\frac{1-e^{-\frac2{1+c}\alpha_ix^i}}{2}\right)^2\,dx\right).
\end{eqnarray*}
 Hence, we have (\ref{0407}).
\end{proof}

\begin{prop}\label{proper mod thm}
Suppose $c_X>0$ and (\ref{bar condition}) holds. Then there are constants $c,C>0$ such that
\begin{eqnarray}\label{proper mod}
\mathcal D_{X}(u)\geq c\int_{2P_+}u[1-\theta_X(y)]\pi(y)\,dy-C,~\forall u\in\hat{\mathcal C}_W.
\end{eqnarray}
\end{prop}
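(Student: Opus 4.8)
The plan is to decompose $\mathcal D_X = \mathcal L_X + \mathcal F + \text{const}$ as in Proposition \ref{Ding energy} and to estimate each piece separately, then combine them to absorb the linear part into the nonlinear part via the scaling inequality \eqref{0407}. The key structural input is that Proposition \ref{linear proper thm} already gives a \emph{linear} coercive lower bound for $\mathcal L_X$ on the normalized cone $\hat{\mathcal C}_W$, namely $\mathcal L_X(u)\ge \lambda\int_{2P_+}u\,\pi(y)[1-\theta_X(y)]\,dy$. Writing $\|u\|:=\int_{2P_+}u[1-\theta_X(y)]\pi(y)\,dy$ for brevity, the target \eqref{proper mod} is exactly a coercive bound of the form $\mathcal D_X(u)\ge c\|u\|-C$, so the whole game is to show that $\mathcal F$ does not destroy the positivity supplied by $\mathcal L_X$.

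First I would handle the nonlinear part $\mathcal F$. By Lemma \ref{non-linear lem}, $\mathcal F(u)$ is (up to the normalization in \eqref{0406}) $-\log$ of an integral of $e^{-(\tilde\psi_\phi-\inf\tilde\psi_\phi)}$ against the bounded positive weight $\prod_{\alpha\in\Phi_+}\big((1-e^{-2\alpha_ix^i})/2\big)^2$. Since the exponent is nonpositive (the shift by $\inf_{\mathfrak a_+}\tilde\psi_\phi$ makes $\tilde\psi_\phi-\inf\tilde\psi_\phi\ge 0$) and the weight is integrable on $\mathfrak a_+$, one obtains a uniform lower bound $\mathcal F(u)\ge -C_0$ independent of $u$; this follows because the integral is bounded above by $\int_{\mathfrak a_+}\prod_{\alpha\in\Phi_+}\big((1-e^{-2\alpha_ix^i})/2\big)^2\,dx<\infty$. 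Thus $\mathcal F$ is bounded below by an absolute constant, which already suffices if one is content with a coefficient $c<\lambda$.

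The cleaner route, which I expect the authors to take, uses the scaling inequality \eqref{0407}: for any $c>0$, $\mathcal F(u)\ge \mathcal F\big(u/(1+c)\big)+n\log(1+c)$. The idea is to pick $c$ so that the coefficient $\lambda$ from Proposition \ref{linear proper thm} is split: apply the linear bound to $u/(1+c)\in\hat{\mathcal C}_W$ (the cone is invariant under positive scaling after renormalizing) to get $\mathcal L_X\big(u/(1+c)\big)\ge \lambda\|u\|/(1+c)$, then combine $\mathcal L_X(u)=\mathcal L_X(u/(1+c))+\tfrac{c}{1+c}\mathcal L_X$-remainder with the bounded-below estimate for $\mathcal F(u/(1+c))\ge -C_0$. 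Since $\mathcal L_X$ is itself \emph{linear} in $u$, one has $\mathcal L_X(u)=(1+c)\mathcal L_X(u/(1+c))$, so $\mathcal L_X(u)\ge \lambda\|u\|$ directly, and then
\begin{eqnarray*}
\mathcal D_X(u)=\mathcal L_X(u)+\mathcal F(u)+\text{const}\ge \lambda\|u\|-C_0+\text{const},
\end{eqnarray*}
yielding \eqref{proper mod} with $c=\lambda$ and a suitable $C$.

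The main obstacle is verifying that $\mathcal F$ is genuinely bounded below uniformly over $\hat{\mathcal C}_W$, i.e. that the normalization \eqref{normalization-u} together with the shift by $\inf_{\mathfrak a_+}\tilde\psi_\phi$ really controls the integral independently of the potential. The subtlety is that $\tilde\psi_\phi=\psi_\phi-4\rho_ix^i$ need not be nonnegative and its infimum depends on $u$; one must confirm that after subtracting $\inf_{\mathfrak a_+}\tilde\psi_\phi$ the resulting exponent is uniformly nonnegative while the integration weight stays integrable, so that the $-\log$ of the integral is bounded above by an $u$-independent constant. This is where the Fano condition $4\rho\in\mathrm{Int}(2P_+)$ and the decay of $\prod_{\alpha\in\Phi_+}\big((1-e^{-2\alpha_ix^i})/2\big)^2$ at infinity in $\mathfrak a_+$ are essential, and it is the only place where a careful integrability argument is needed rather than a formal manipulation.
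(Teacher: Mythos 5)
Your argument hinges on the claim that $\mathcal F(u)\ge -C_0$ uniformly on $\hat{\mathcal C}_W$, which you justify by asserting that the weight $w(x):=\prod_{\alpha\in\Phi_+}\bigl((1-e^{-2\alpha(x)})/2\bigr)^2$ is integrable on $\mathfrak a_+$. That assertion is false: each factor tends to $1/4$ as $\alpha(x)\to\infty$, so $w$ tends to the positive constant $4^{-\#\Phi_+}$ along rays going to infinity in the interior of the unbounded $r$-dimensional cone $\mathfrak a_+$, and $\int_{\mathfrak a_+}w\,dx=\infty$ (in the toric case $\Phi_+=\emptyset$ one even has $w\equiv1$). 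Worse, the conclusion itself fails, not just its justification: fix $u\in\hat{\mathcal C}_W$ and scale $u_t=tu$; the Legendre transform of $u_t$ is $t\psi(\cdot/t)$, so $\tilde\psi_{t}-\inf\tilde\psi_{t}=t\bigl[\tilde\psi(\cdot/t)-\inf\tilde\psi\bigr]$, and after the substitution $x=tz$, Laplace's method at the interior critical point $x^*$ (which exists because $4\rho\in \mathrm{Int}(2P_+)$, so $\nabla\tilde\psi(x^*)=0$ for some $x^*\in\mathfrak a_+$) gives $\int_{\mathfrak a_+}e^{-(\tilde\psi_t-\inf\tilde\psi_t)}w\,dx\sim C\,t^{r/2}$, hence $\mathcal F(u_t)\sim-\tfrac r2\log t\to-\infty$. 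Both of your routes (the direct one and the ``cleaner'' one) feed this false bound into the final inequality, so neither closes.

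The paper's proof never bounds $\mathcal F$ alone; this is exactly the point you are missing. It introduces the function $A(y)$ built from the Ricci potential $h_0$ of the reference metric and the comparison functional $\mathcal D_A=\mathcal L_A+\mathcal F$, for which $u_0$ is a critical point. Since the linear part is affine along the geodesics of Lemma \ref{0601} and $\mathcal N$ is convex along geodesics by Berndtsson, one gets $\mathcal D_A(u)\ge\mathcal D_A(u_0)$ on all of $\hat{\mathcal C}_W$, i.e. $\mathcal F(u)\ge \mathcal D_A(u_0)-\mathcal L_A(u)$: a lower bound for $\mathcal F$ that is \emph{linear} in $u$ with a possibly large negative slope, not a constant. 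The scaling inequality \eqref{0407}, combined with the linearity $\mathcal L_A\bigl(u/(1+C)\bigr)=\mathcal L_A(u)/(1+C)$, is then used precisely to shrink that slope until it can be absorbed by the coercive term $\lambda\int_{2P_+}u[1-\theta_X(y)]\pi(y)\,dy$ supplied by Proposition \ref{linear proper thm}. So \eqref{0407} is not a dispensable refinement, as your write-up suggests; it is the device that makes the negative linear contribution of $\mathcal F$ small relative to $\lambda$. To repair your proof, replace the claim $\mathcal F\ge-C_0$ by this convexity/critical-point comparison, or equivalently prove that for every $\epsilon>0$ there is $C_\epsilon$ with $\mathcal F(u)\ge-\epsilon\int_{2P_+}u[1-\theta_X(y)]\pi(y)\,dy-C_\epsilon$, which is what the comparison argument plus scaling actually delivers.
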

\begin{proof}
Define a function $A$ by
\begin{eqnarray*}
A(y)={\frac{V}{\int_{\mathfrak a_+}e^{-\psi_0}\mathbf J(x)dx}}e^{h_0(\nabla u_0(y))},~y(x)=\nabla\psi_0(x).
\end{eqnarray*}
It is clear that
$$\int_{\mathfrak a_+}e^{-\psi_0}\mathbf J(x)dx=\int_Me^{h_0}\omega_0^n=V.$$
Hence, $A$ is a bounded smooth function.

Let
\begin{eqnarray*}
\mathcal D_A(u_\phi):=\mathcal D^0_A(u_\phi)+\mathcal N(\phi),~\forall\phi\in\mathcal H_{K\times K}(\omega_0),
\end{eqnarray*}
where $$\mathcal D^0_A(u):={\frac1V}\int_{2P_+}uA(y)\pi(y)\,dy.$$
It is obvious that $u_0$ is a critical point of $\mathcal D_A(\cdot)$. On the other hand, along any geodesic, $\mathcal D^0_A(\cdot)$ is affine by Lemma \ref{0601} and $\mathcal N(\cdot)$ is convex by \cite[Theorem 1.1]{Bern}. Hence,
\begin{eqnarray}\label{D_A lower bound}
\mathcal D_A(u)\geq\mathcal D_A(u_0),~\forall u\in\hat{\mathcal C}_W.
\end{eqnarray}

Rewrite $\mathcal D_A(\cdot)=\mathcal L_A(\cdot)+\mathcal F(\cdot)$, where
$$\mathcal L_A(u):={\frac1V}\int_{2P_+}uA(y)\pi(y)\,dy-u(4\rho).$$
By Proposition \ref{linear proper thm} and the boundedness of $A$, it is clear that for any $\delta>0$
\begin{eqnarray*}
\left|\mathcal L_{X}(u)-\mathcal L_A(u)\right|&=&\left|\int_{2P_+}u(1-\theta_X(y)-A(y))\pi(y)\,dy\right|\notag\\
&\leq&C_A\int_{2P_+}u[1-\theta_X(y)]\pi(y)\,dy\notag\\
&\leq&{\frac{C_A(1+\delta)}{\lambda}}\mathcal L_{X}(u)-C_A\delta\int_{2P_+}u[1-\theta_X(y)]\pi(y)\,dy,~\forall u\in\hat{\mathcal C}_W,
\end{eqnarray*}
for some constant $C_A>0$. Then
\begin{eqnarray*}
\mathcal L_{X}(u)\geq{\frac{\lambda}{\lambda+C_A(1+\delta)}}\left[\mathcal L_A(u)+C_A\delta\int_{2P_+}u[1-\theta_X(y)]\pi(y)\,dy\right],~\forall u\in\hat{\mathcal C}_W.
\end{eqnarray*}
Hence, taking $C=\frac{C_A(1+\delta)}{\lambda}$,  we have for any $u\in\hat{\mathcal C}_W$,
\begin{eqnarray*}\label{0408}
\mathcal D_X(u)
&\geq&\mathcal L_A\left({\frac{u}{1+C}}\right)+\mathcal F(u)+{\frac{ C_A\delta}{1+C_AC}}\int_{2P_+}u[1-\theta_X(y)]\pi(y)\,dy\notag\\
&\geq&\mathcal L_A\left({\frac{u}{1+C}}\right)+\mathcal F\left({\frac{u}{1+C}}\right)+{\frac{C_A\delta}{1+C_AC}}\int_{2P_+}u[1-\theta_X(y)]\pi(y)\,dy-n\log\left(1+C\right)\notag\\
&=&\mathcal D_A\left({\frac{u}{1+C}}\right)+{\frac{ C_A\delta}{1+C_AC}}\int_{2P_+}u[1-\theta_X(y)]\pi(y)\,dy-n\log\left(1+C_AC\right)\notag\\
&\geq&{\frac{ C_A\delta}{1+C_AC}}\int_{2P_+}u[1-\theta_X(y)]\pi(y)\,dy+\left(\mathcal D_A(u_0)-n\log\left(1+C_AC\right)\right),
\end{eqnarray*}
where we used (\ref{0406}) and (\ref{D_A lower bound}).
This completes the proof.
\end{proof}

To use Theorem \ref{0301}, we introduce the following normalization: for any $\phi\in \mathcal H_{K\times K}(\omega_0)$, let $u_\phi$ be the Legendre function of $\psi_\phi$.
Take a $v\in\eta_c(M)$ such that Re$(v)=-\nabla u_\phi(O)$. Let $\sigma_v(t)$ be the one parameter group generated by Re$(v)$. Then $\sigma_v(t)\in Z(G)$. It follows
$$(\sigma_v(1))^*\omega_\phi=\omega_0+ \sqrt{-1}\partial\bar\partial \hat\phi$$
induces a $K\times K$-invariant K\"ahler potential $\hat\phi$. Since we may also normalize $\psi_{\hat\phi}$ so that $\psi_{\hat{\phi}}(O)=0$, thus the Legendre function  $u_{\hat\phi}$  of $\psi_{\hat \phi}$ is given by
\begin{eqnarray}\label{0417+}
u_{\hat\phi}(y)=u_{\phi}(y)-\langle\nabla u_\phi(O),y\rangle-u_{\phi}(O),
\end{eqnarray}
which satisfies $u_{\hat\phi}\in\hat{\mathcal C}_W$.
Then we have
\begin{lem}\label{invariant under normalization}
Under the above normalization, we have
$\mathcal D_{X}(u_{\hat\phi})=\mathcal D_{X}(u_\phi)$.
\end{lem}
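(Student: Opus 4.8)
The plan is to use the explicit decomposition $\mathcal D_X(\phi)=\mathcal L_X(u_\phi)+\mathcal F(u_\phi)+\mathrm{const}$ from Proposition \ref{Ding energy}, and to show that \emph{each} of the two pieces $\mathcal L_X$ and $\mathcal F$ is separately unchanged when $u_\phi$ is replaced by $u_{\hat\phi}=u_\phi-\langle\nabla u_\phi(O),\cdot\rangle-u_\phi(O)$ as in (\ref{0417+}). Write $a:=\nabla u_\phi(O)$ and $b:=u_\phi(O)$. The first observation, which is the recurring mechanism behind every cancellation, is that $a\in\mathfrak a_t$: since $O$ is the unique $W$-fixed point of $\mathfrak a^*$ and $u_\phi$ is $W$-invariant, $\nabla u_\phi(O)$ is fixed by $W$ and hence lies in the central part $\mathfrak a_t=\mathfrak z(\mathfrak g)\cap\mathfrak a$. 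Consequently $\alpha(a)=0$ for every root $\alpha\in\Phi$.

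For the linear part I would compute $\mathcal L_X(u_{\hat\phi})-\mathcal L_X(u_\phi)$ directly from (\ref{L(u)}). Subtracting the constant $b$ contributes $-\frac{1}{V}\int_{2P_+}b\,\pi(y)[1-\theta_X(y)]\,dy+b=0$, using the normalization $\int_{2P_+}[1-\theta_X(y)]\pi(y)\,dy=V$. Subtracting $\langle a,y\rangle$ contributes $-\langle a,\mathbf b_X\rangle+\langle a,4\rho\rangle$. Here $\langle a,4\rho\rangle=0$ because $4\rho$ is a sum of roots, all of which annihilate $a$ (equivalently $4\rho\in\mathfrak a^*_{ss}\perp\mathfrak a_t$); and $\langle a,\mathbf b_X\rangle=0$ because applying the defining identity (\ref{0405}) of $X$ to $v=a\in\mathfrak z(\mathfrak g)$ gives $\int_{2P_+}\langle a,y\rangle[1-\theta_X(y)]\pi(y)\,dy=0$, i.e. $\langle a,\mathbf b_X\rangle=0$. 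Hence $\mathcal L_X(u_{\hat\phi})=\mathcal L_X(u_\phi)$.

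For the nonlinear part I would first translate (\ref{0417+}) into a statement about Legendre duals: subtracting the affine function $\langle a,y\rangle+b$ from $u_\phi$ shifts the gradient map by $-a$, so the dual potentials satisfy $\psi_{\hat\phi}(x)=\psi_\phi(x+a)+b$. Substituting this into (\ref{F(u)}) and changing variables $x\mapsto x+a$ in $\int_{\mathfrak a_+}e^{-\psi_{\hat\phi}}\mathbf J\,dx$, the crucial point is again that $\alpha(a)=0$ for all $\alpha\in\Phi_+$: this forces both the weight $\mathbf J(x)=\prod_{\alpha\in\Phi_+}\sinh^2\alpha(x)$ and the chamber $\mathfrak a_+$ to be invariant under the translation, so the integral only picks up the factor $e^{-b}$. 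Combining $-\log\int_{\mathfrak a_+}e^{-\psi_{\hat\phi}}\mathbf J\,dx=b-\log\int_{\mathfrak a_+}e^{-\psi_\phi}\mathbf J\,dx$ with $u_{\hat\phi}(4\rho)=u_\phi(4\rho)-\langle a,4\rho\rangle-b=u_\phi(4\rho)-b$ yields $\mathcal F(u_{\hat\phi})=\mathcal F(u_\phi)$, and the lemma follows.

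The individual steps are short; the one point deserving care, and the main conceptual obstacle, is verifying that the central automorphism $\sigma_v(1)\in Z(G)$ acts in the Legendre picture exactly as the translation $x\mapsto x+a$ with $a\in\mathfrak a_t$, so that the root-invariance of $\mathbf J$ and of $\mathfrak a_+$ may be invoked. I would record this as the identity $\psi_{\hat\phi}(x)=\psi_\phi(x+a)+b$ and check it against (\ref{0417+}). Any mismatch in the additive normalization of $\psi_{\hat\phi}$ (for instance the choice $\psi_{\hat\phi}(O)=0$) is harmless, since $\mathcal D_X$ is manifestly invariant under adding a constant $c$ to $\phi$: the term $+c$ produced in $\mathcal N$ is cancelled by the term $-c$ in $\mathcal D^0_X$, because $\int_M(1-\theta_X(\omega_\phi))\omega_\phi^n=V$.
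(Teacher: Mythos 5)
Your proposal is correct and is essentially the paper's own proof: both reduce the claim to the two invariances $\mathcal L_X(\langle a,y\rangle+b)=0$ (via (\ref{0405}), $\langle a,4\rho\rangle=0$, and $\int_{2P_+}[1-\theta_X]\pi\,dy=V$) and $\mathcal F(u_{\hat\phi})=\mathcal F(u_\phi)$ by the change of variables $x\mapsto x\pm a$ with $a=\nabla u_\phi(O)\in\mathfrak a_t$, using $\alpha(a)=0$ to keep $\mathbf J$ and the chamber $\mathfrak a_+$ invariant. The only slips are cosmetic: $O$ need not be the \emph{unique} $W$-fixed point of $\mathfrak a^*$ (it suffices that the origin is $W$-fixed, so $\nabla u_\phi(O)$ lies in the fixed set $\mathfrak a_t$), and your sign $\psi_{\hat\phi}(x)=\psi_\phi(x+a)+b$ versus the paper's $-u_\phi(O)$ is immaterial since, as you note, additive constants cancel in $\mathcal D_X$.
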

\begin{proof}
Denote $a^i=-\text{Re}(u_{\phi,i}(O))$, then $(a^i)\in\mathfrak a_t$ and consequently $\alpha(a)=0$ for all $\alpha\in\Phi$. On the other hand, we have
$$\psi_{\hat\phi}(x)=\psi_\phi(x-a)-u_\phi(O).$$
Taking change of variables $x\to(x-a)$ in (\ref{0406}), by the above relations, we see that
$\mathcal F(u_\phi)=\mathcal F(u_{\hat\phi})$.
By $(a^i)\in\mathfrak a_t$ and (\ref{0405}),
$\mathcal L_{X}(a^iy_i-u_\phi(O))=0$.
Hence, by (\ref{0417+}) $\mathcal L_{X}(u_\phi)=\mathcal L_{X}(u_{\hat\phi})$. The lemma is proved.
\end{proof}

The following lemma is analogous to \cite[Lemma 4.14]{LZZ} and \cite[Lemma 3.4]{WZZ}, we omit the proof.
\begin{lem}\label{609}
There exists a uniform $C_J>0$ such that
\begin{equation}\label{bound psi}
\left|J_{X}(\hat\phi)-\int_{2P_+}u_{\hat\phi}[1-\theta_X(y)]\pi(y)\,dy\right|\leq C_J, \ \forall \phi\in \mathcal H_{K\times K}(\omega_0),\nonumber
\end{equation}
where $u_{\hat\phi}\in\hat{\mathcal C}_{W}$ and $\psi_{\hat{\phi}}$ is the Legendre function of $u_{\hat\phi}$.
\end{lem}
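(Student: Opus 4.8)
The plan is to peel off the linear part of $J_X$, rewrite the remainder through the Legendre correspondence already established in Proposition \ref{Ding energy}, and then bound a single linear functional by a uniform Lipschitz estimate on the normalized potential. First I would split $J_X(\hat\phi)$ according to its two measures. Since $(1-\theta_X(\omega_0))\omega_0^n$ is fixed, the first half of $J_X$ collapses, along the path from $0$ to $\hat\phi$, to the linear term $\int_M\hat\phi\,(1-\theta_X(\omega_0))\omega_0^n$, while the second half is exactly $V\mathcal D^0_X(\hat\phi)$ by the definition \eqref{nonlinear}. Applying the computation of $\mathcal D^0_X$ in Proposition \ref{Ding energy} gives $V\mathcal D^0_X(\hat\phi)=\int_{2P_+}u_{\hat\phi}(y)[1-\theta_X(y)]\pi(y)\,dy+C_1$ for a constant $C_1$ depending only on the reference data. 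Hence
\begin{eqnarray*}
J_X(\hat\phi)-\int_{2P_+}u_{\hat\phi}[1-\theta_X(y)]\pi(y)\,dy=\int_M\hat\phi\,(1-\theta_X(\omega_0))\omega_0^n+C_1,
\end{eqnarray*}
so the lemma is equivalent to a uniform bound on the remaining linear integral.

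Next I would pass to the polytope. By Proposition \ref{KAK int}, the formula (\ref{MA}) for the Monge--Amp\`ere measure, and the change of variables $y=\nabla\psi_0(x)$, the linear integral becomes $\int_{2P_+}\hat\phi(\nabla u_0(y))[1-\theta_X(y)]\pi(y)\,dy$, where $\nabla u_0$ is the inverse gradient map of the reference potential. The remaining task is therefore to control $\hat\phi(\nabla u_0(y))$ after integration against the fixed finite measure $[1-\theta_X(y)]\pi(y)\,dy$. Note that the normalization is genuinely needed here: $J_X$ is invariant under adding constants to $\phi$, whereas $\int_{2P_+}u_{\hat\phi}[1-\theta_X]\pi\,dy$ is not, and it is exactly the normalization $u_{\hat\phi}\in\hat{\mathcal C}_W$ (equivalently $\psi_{\hat\phi}(0)=0$) that pins down this additive ambiguity.

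The key estimate, which I expect to be the main obstacle, is a uniform Lipschitz bound on $\hat\phi$. Since every $\psi_{\hat\phi}$ and $\psi_0$ have gradient image contained in the fixed dilated polytope $2P$, one has $|\nabla\hat\phi|=|\nabla\psi_{\hat\phi}-\nabla\psi_0|\le L:=2\sup_{y\in 2P}|y|$ on all of $\mathfrak a$, uniformly in $\phi$. The normalization forces $\psi_{\hat\phi}(0)=0$, hence $\hat\phi(0)=-\psi_0(0)$ is a fixed constant, and therefore $|\hat\phi(x)|\le|\psi_0(0)|+L|x|$ for every $x\in\mathfrak a_+$. Consequently
\begin{eqnarray*}
\left|\int_{2P_+}\hat\phi(\nabla u_0(y))[1-\theta_X(y)]\pi(y)\,dy\right|\le\int_{2P_+}\big(|\psi_0(0)|+L|\nabla u_0(y)|\big)[1-\theta_X(y)]\pi(y)\,dy.
\end{eqnarray*}

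It remains to verify that the right-hand side is a fixed finite constant, i.e. that $\int_{2P_+}|\nabla u_0(y)|\pi(y)\,dy<\infty$; this is the delicate boundary point, since $\nabla u_0$ blows up on $\partial(2P)$. Here I would use that $u_0-u_G\in C^\infty(\overline{2P})$, so $\nabla u_0$ differs from $\nabla u_G$ by a bounded term and has at worst a logarithmic singularity at $\partial(2P)$: along the walls $\{\langle\alpha,\cdot\rangle=0\}$ the factor $\pi$ vanishes to second order and kills this singularity, while along the genuine facets $\pi$ is bounded and $\log\mathrm{dist}$ is integrable. Thus the integral converges to a constant independent of $\phi$, and absorbing $C_1$ yields the desired uniform $C_J$. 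This integrability against $\pi\,dy$ is precisely the analytic content already carried out in \cite[Lemma 4.14]{LZZ} and \cite[Lemma 3.4]{WZZ}, which I would invoke rather than reprove.
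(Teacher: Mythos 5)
Your proof is correct, and it is in substance the argument the paper has in mind: the paper omits the proof and instead cites \cite[Lemma 4.14]{LZZ} and \cite[Lemma 3.4]{WZZ}, which proceed by exactly your route --- split $J_X$ into the fixed linear part plus $V\mathcal D^0_X$, convert the latter to $\int_{2P_+}u_{\hat\phi}[1-\theta_X]\pi\,dy$ via the Legendre computation of Proposition \ref{Ding energy}, and bound the leftover linear integral using the uniform Lipschitz estimate $|\nabla\hat\phi|\leq 2\sup_{2P}|y|$ (gradients of $\psi_{\hat\phi}$, $\psi_0$ lie in $2P$), the normalization $\psi_{\hat\phi}(O)=0$, and the integrability of $|\nabla u_0|$ (at worst logarithmic blow-up at $\partial(2P)$, since $u_0-u_G\in C^\infty(\overline{2P})$) against the bounded density $[1-\theta_X]\pi$. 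Your observation that $J_X$ is constant-invariant while the polytope integral is not, so that the normalization $u_{\hat\phi}\in\hat{\mathcal C}_W$ is what pins down the bound, is also the correct and essential point.
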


\begin{proof}[Proof of Theorem \ref{proper}]
For any $\phi\in \mathcal H_{K\times K}(\omega_0)$, there exists $\sigma\in Z(G)$ such that
$$\sigma^*\omega_\phi=\omega_0+ \sqrt{-1}\partial\bar\partial \hat\phi$$
as above.
Applying Proposition \ref{proper mod thm}, we have
$$\mathcal D_{X}(u_{\hat\phi})\geq  \delta\int_{2P_+}u_{\hat\phi}\pi \,dy-C_{\delta}.$$
Thus  by  Proposition \ref{Ding energy}, Lemma \ref{invariant under normalization} and Lemma \ref{609},
$$\mathcal D_{X}(\phi)=\mathcal D_{X}(\hat\phi)=\mathcal D_{X}(u_{\hat\phi})  \geq \delta\cdot J_{X}(\hat\phi)-C_J-C_{\delta}.$$
The theorem then follows from (\ref{0204}).
\end{proof}

\subsection{Necessity}\label{necessity}
To complete the proof of Theorem \ref{bar condition thm}, we will show that (\ref{bar condition}) is also a necessary condition of the existence of Mabuchi metrics. It is equivalent to show that
\begin{eqnarray}\label{040X necessity}
\langle\xi,\mathbf b_X-4\rho\rangle>0,~\forall \xi\in\mathfrak a_+.
\end{eqnarray}
We will adopt the method used in \cite{Del2}.

By the $K\times K$-invariance, \eqref{0202} can be reduced to the following Monge-Amp\'ere equation on $\mathfrak a_+$,
\begin{eqnarray}\label{MA on a_+}
\det(\psi_{0,ij}+\phi_{ij})\prod_{\alpha\in\Phi_+}\langle\alpha,\nabla(\psi_0+\phi)\rangle^2=C\cdot\frac{ e^{-(\psi_0+\phi-\log\mathbf J)}}{1-\theta_X(\omega_0)-X(\phi)}.
\end{eqnarray}
Suppose $\phi$ is a solution, for any $\xi\in\mathfrak a_+$, we have
\begin{eqnarray}
0&=&-\int_{\mathfrak a_+}\xi^i{\frac{\partial}{\partial x^i}}e^{-(\psi_0+\phi-\log\mathbf J)}\notag\\
&=&\int_{\mathfrak a_+}\xi^ie^{-(\psi_0+\phi-\log\mathbf J)}{\frac{\partial (\psi_0+\phi-\log\mathbf J)}{\partial x^i}}\notag\\
&=&\int_{\mathfrak a_+}\xi^i\det(\psi_{0,ij}+\phi_{,ij})\prod_{\alpha\in\Phi_+}\langle\alpha,\nabla(\psi_0+\phi)\rangle^2[1-\theta_X(\omega_\phi)]{\frac{\partial(\psi_0+\phi-\log\mathbf J)}{\partial x^i}}\notag\\
&<&\int_{\mathfrak a_+}\xi^i\det(\psi_{0,ij}+\phi_{,ij})\prod_{\alpha\in\Phi_+}\langle\alpha,\nabla(\psi_0+\phi)\rangle^2[1-\theta_X(\omega_\phi)]{\frac{\partial (\psi_0+\phi-\log\mathbf J)}{\partial x^i}}\notag\\
&=&V\int_{2P_+}\xi^i(y_i-4\rho_i)\pi(y)[1-\theta_X(y)]\,dy\notag\\
&=&V\cdot\langle\xi,\mathbf b_X-4\rho\rangle,
\end{eqnarray}
where in the fourth line we used the fact that for any $\xi,x\in\mathfrak a_+$
\begin{eqnarray*}
-\xi^i{\frac{\partial}{\partial x^i}}\log\mathbf J=-2\sum_{\alpha\in\Phi_+}\alpha(\xi)\cdot\coth\alpha(x)
<-2\sum_{\alpha\in\Phi_+}\alpha(\xi)
=-4\rho(\xi).
\end{eqnarray*}
Then we have (\ref{040X necessity}).

\section{Appendix: Proof of Theorem \ref{0501}}

In this appendix, we solve (\ref{0302}) at $t=0$. Following \cite{Zhu} for the K\"ahler-Ricci soliton case,
we introduce the following path,
\begin{eqnarray}\label{0502}
(1-\theta_X(\omega_{\phi_t}))^t\omega_{\phi_t}^n=e^{h_0}\omega_0^n,~t\in[0,1].
\end{eqnarray}
Set $\mathfrak I:=\{t\in[0,1]|(\ref{0502}) \text{ has a solution for $t$}\}$.
The Calabi-Yau theorem implies that $0\in\mathfrak I$. We shall prove $\mathfrak I$ is both open and closed in $[0,1]$.

\subsection{Openness}
Define a functional
$$J_t(\phi)=\int_0^1\int_M\dot\phi_s(1-\theta_X(\omega_{\phi_s}))^t\omega_{\phi_s}^n,$$
where $\phi_s$ is any smooth path joining $\phi$ and $0$ in $\mathcal H_{X}(\omega_0)$.
It is standard to show that $J_t(\cdot)$ is well-defined. Thus by taking $\phi_s=s\phi$, we have
\begin{eqnarray*}
J_t(\phi)=\int_0^1\int_M\phi(1-\theta_X(\omega_{d\phi_s}))^t\omega^n_{s\phi}.
\end{eqnarray*}
Denote an operator by
$$L_{t}(\psi):=\bigtriangleup_{\omega_{\phi_t}}\psi-{\frac{tX(\psi)}{1-\theta_X(\omega_{\phi_t})}}-\int_M\psi(1-\theta_X(\omega_{\phi_t}))^t\omega_{\phi_t}^n, \ \forall\psi\in\mathcal H_{X}(\omega_0).$$
Then for any $K_X$-invariant smooth real functions $f$ and $g$, it is easy to see
\begin{eqnarray}\label{0503}
\int_ML_t(f)g(1-\theta_X(\omega_{\phi_t}))^t\omega_{\phi_t}^n=\int_MfL_t(g)(1-\theta_X(\omega_{\phi_t}))^t\omega_{\phi_t}^n.
\end{eqnarray}
We have
\begin{lem}\label{0504}
Suppose $\phi_t$ is a smooth solution of (\ref{0502}) for some $t\in[0,1)$, then the first eigenvalue of $L_{t}$ is positive.
\end{lem}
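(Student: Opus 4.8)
The plan is to mirror the Bochner-type argument of Lemma \ref{0302+}, exploiting that $L_{t}$ is self-adjoint with respect to the weighted measure $d\mu_{t}:=(1-\theta_X(\omega_{\phi_t}))^{t}\omega_{\phi_t}^n$ recorded in (\ref{0503}) (the self-dual pairing of Remark \ref{rmk 2.3}). Along the path (\ref{0502}) this measure is in fact the \emph{fixed}, smooth, strictly positive measure $d\mu_{t}=e^{h_0}\omega_0^n$; here $c_X>0$ guarantees $1-\theta_X(\omega_{\phi_t})\geq c_X>0$, so the real power is well defined. I would take a $K_X$-invariant eigenfunction $\psi$, $L_{t}\psi=-\lambda\psi$. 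If $\psi$ is constant then $L_{t}\psi=-V\psi$, giving $\lambda=V>0$; hence I may assume $\psi$ is orthogonal to the constants, $\int_M\psi\,d\mu_{t}=0$, so the normalizing term drops and $\bigtriangleup_{\omega_{\phi_t}}\psi-\frac{tX(\psi)}{1-\theta_X(\omega_{\phi_t})}=-\lambda\psi$. It then suffices to show $\lambda>0$ for nonconstant $\psi$.

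First I would compute the Ricci form along (\ref{0502}). Applying $\sqrt{-1}\partial\bar\partial\log$ to (\ref{0502}) and using $Ric(\omega_0)-\omega_0=\sqrt{-1}\partial\bar\partial h_0$ gives
\[
Ric(\omega_{\phi_t})=\omega_0+t\sqrt{-1}\partial\bar\partial\log\bigl(1-\theta_X(\omega_{\phi_t})\bigr).
\]
The point, in contrast with (\ref{0302}), is that (\ref{0502}) carries no $\phi_t$ in its exponent, so neither a $(1-t)\omega_0$ nor a $t\omega_{\phi_t}$ term is produced; equivalently, the Bakry--Émery Ricci curvature $Ric(\omega_{\phi_t})+\mathrm{Hess}_{\mathbb{C}}\bigl(-t\log(1-\theta_X(\omega_{\phi_t}))\bigr)$ equals exactly $\omega_0$.

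Next I would run the integration-by-parts and Ricci-identity computation as in (\ref{0205}): differentiate the eigenequation, pair with $\psi_{,\bar i}$ against $d\mu_{t}$, commute covariant derivatives, and integrate by parts, so that the drift terms produced by $X$ cancel the $t\sqrt{-1}\partial\bar\partial\log(1-\theta_X)$ contribution of $Ric(\omega_{\phi_t})$. This should leave
\[
\lambda\int_M|\partial\psi|^2_{\omega_{\phi_t}}\,d\mu_{t}=\int_M|\psi_{,ij}|^2\,d\mu_{t}+\int_M g_{i\bar j}(0)\,\psi^{,i}\psi^{,\bar j}\,d\mu_{t}.
\]
Since $g_{i\bar j}(0)=\omega_0$ is positive definite, the second integral is strictly positive for nonconstant $\psi$, while $\int_M|\partial\psi|^2_{\omega_{\phi_t}}\,d\mu_{t}>0$ as well; hence $\lambda>0$. (This argument in fact works for every $t\in[0,1]$, in particular for the range $t\in[0,1)$ needed here for openness.)

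The main obstacle is the bookkeeping in the last identity: one must verify that the $X$-drift terms arising in the integration by parts \emph{precisely} absorb the $t\sqrt{-1}\partial\bar\partial\log(1-\theta_X)$ part of $Ric(\omega_{\phi_t})$, leaving only the positive $\omega_0$-term. This is the exact analogue of the cancellation in Lemma \ref{0302+}, now occurring with coefficient $1$ in place of $(1-t)$. If one wants only positivity and not this explicit curvature identity, there is a shorter route that bypasses the obstacle entirely: by the self-adjointness (\ref{0503}) one has $\langle-L_{t}\psi,\psi\rangle_{\mu_{t}}=\int_M|\partial\psi|^2_{\omega_{\phi_t}}\,d\mu_{t}+\bigl(\int_M\psi\,d\mu_{t}\bigr)^2$, which is strictly positive unless $\psi\equiv0$, so every eigenvalue of $L_{t}$ is positive.
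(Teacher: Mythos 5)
Your main route coincides with the paper's own proof: the paper likewise disposes of constant eigenfunctions first (where $\lambda=\int_M(1-\theta_X(\omega_{\phi_t}))^t\omega_{\phi_t}^n=V>0$), then differentiates the eigenequation, pairs against $\psi_{,\bar i}$ with your weight $d\mu_t=(1-\theta_X(\omega_{\phi_t}))^t\omega_{\phi_t}^n$, and uses the Ricci identity together with $Ric(\omega_{\phi_t})=\omega_0+t\sqrt{-1}\partial\bar\partial\log(1-\theta_X(\omega_{\phi_t}))$ to cancel the drift terms; the bookkeeping you flag as the ``main obstacle'' is exactly what the paper carries out, with coefficient $1$ as you predict, ending in your identity $\lambda\int_M|\partial\psi|^2_{\omega_{\phi_t}}\,d\mu_t=\int_M\psi_{,ij}\psi_{,\bar i\bar j}\,d\mu_t+\int_Mg_{i\bar j}(0)\psi_{,j}\psi_{,\bar i}\,d\mu_t$. (Your version is in fact the consistent one: the paper's intermediate Ricci formula misprints $g_{i\bar j}(t)$ for $g_{i\bar j}(0)$, and its final display drops the nonnegative Hessian term, harmlessly.) What is genuinely different is your closing shortcut, which the paper does not use: along (\ref{0502}) the drift $-tX/(1-\theta_X(\omega_{\phi_t}))$ is precisely the gradient pairing coming from $\log(1-\theta_X(\omega_{\phi_t}))^t$, so a single integration by parts gives $\langle-L_t\psi,\psi\rangle_{\mu_t}=\int_M|\partial\psi|^2_{\omega_{\phi_t}}\,d\mu_t+\bigl(\int_M\psi\,d\mu_t\bigr)^2>0$ for $\psi\not\equiv0$, and positivity of the spectrum follows with no curvature input, for every $t\in[0,1]$; one only needs the self-adjointness (\ref{0503}) (restricting to real $K_X$-invariant functions, as the paper also does implicitly) to know the spectrum is real. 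This buys brevity and generality, at the cost of the quantitative information in the Bochner identity; note that in the companion Lemma \ref{0302+} the analogous shortcut would only give $\lambda\geq0$, whereas the sharp bound $\lambda\geq t$ genuinely requires the curvature computation --- here, by contrast, openness needs only strict positivity, so your elementary argument fully suffices. Your side observation that $d\mu_t=e^{h_0}\omega_0^n$ is a fixed measure along the path is correct, though neither proof needs it.
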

\begin{proof}
Suppose $\lambda$ is the first eigenvalue and $\psi$ is an eigenfunction. Then by
%\begin{eqnarray}\label{0505}
$L_t\psi=-\lambda\psi$,
%\end{eqnarray}
\begin{eqnarray*}
\lambda\int_M\psi(1-\theta_X(\omega_{\phi_t}))^t\omega_{\phi_t}^n=\int_M\psi(1-\theta_X(\omega_{\phi_t}))^t\omega_{\phi_t}^n\cdot\int_M\int_M\psi(1-\theta_X(\omega_{\phi_t}))^t\omega_{\phi_t}^n.
\end{eqnarray*}
By the assumption $c_X>0$, if $\psi\equiv c$ for some constant $c\not=0$, then $\lambda>0$. Thus we may assume that $\psi\not\equiv const.$ below.

As before, we may choose a local co-frame $\{\Theta^i\}$ such that $\omega_{\phi_t}=\sqrt{-1}\sum_{i=1}^n\Theta^i\wedge\bar\Theta^i$. By (\ref{0305}) and integration by parts, it follows
\begin{eqnarray}\label{0506}
&&\lambda\int_M\psi_{,i}\psi_{,\bar i}(1-\theta_X(\omega_{\phi_t}))^t\omega_{\phi_t}^n\notag\\
&=&-\int L(\psi)_{,i}\psi_{,\bar i}(1-\theta_X(\omega_{\phi_t}))^t\omega_{\phi_t}^n\notag\\
&=&-\int_M{{\psi_{,j\bar ji}}}\psi_{,\bar i}(1-\theta_X(\omega_{\phi_t}))^t\omega_{\phi_t}^n+t\int_M{\frac{X^j_{,i}\psi_{,j}\psi_{,\bar i}}{1-\theta_X(\omega_{\phi_t})}}(1-\theta_X(\omega_{\phi_t}))^t\omega_{\phi_t}^n\notag\\
&&+t\int_M{\frac{ {\bar X^{\bar j}}X^i\psi_{,i}\psi_{,\bar j}}{(1-\theta_X(\omega_{\phi_t}))^2}}(1-\theta_X(\omega_{\phi_t}))^t\omega_{\phi_t}^n+\int_M{\frac{X^j\psi_{,\bar i}\psi_{,ij}}{1-\theta_X(\omega_{\phi_t})}}(1-\theta_X(\omega_{\phi_t}))^t\omega_{\phi_t}^n.
\end{eqnarray}
By Ricci identity and integration by parts, the first term on the right-hand side
\begin{eqnarray*}
&&-\int_M\psi_{,j\bar ji}\psi_{,\bar i}(1-\theta_X(\omega_{\phi_t}))^t\omega_{\phi_t}^n\notag\\
&=&-\int_M(\psi_{,ij\bar j}-R^p_{j~i\bar j}\psi_{,p})\psi_{,\bar i}(1-\theta_X(\omega_{\phi_t}))^t\omega_{\phi_t}^n\notag\\
&=&\int_MRic_{i\bar p}\psi_{,p}\psi_{,\bar i}(1-\theta_X(\omega_{\phi_t}))^t\omega_{\phi_t}^n+\int_M\psi_{,ij}\psi_{,\bar i\bar j}(1-\theta_X(\omega_{\phi_t}))^t\omega_{\phi_t}^n\notag\\
&&-t\int_M\psi_{,ij}\psi_{,\bar i}X^j(1-\theta_X(\omega_{\phi_t}))^{t-1}\omega_{\phi_t}^n.\notag\\
\end{eqnarray*}
Plugging the above equality into \eqref{0506}, one gets
\begin{eqnarray}\label{0507}
&&\lambda\int_M\psi_{,i}\psi_{,\bar i}(1-\theta_X(\omega_{\phi_t}))^t\omega_{\phi_t}^n\notag\\
&=&\int_M\left(Ric_{i\bar j}+{\frac{tX_{\bar j,i}}{1-\theta_X(\omega_{\phi_t})}}\right)\psi_{, j}\psi_{,\bar i}(1-\theta_X(\omega_{\phi_t}))^t\omega_{\phi_t}^n+\int_M\psi_{,ij}\psi_{,\bar i\bar j}(1-\theta_X(\omega_{\phi_t}))^t\omega_{\phi_t}^n\notag\\
&&+t\int_M{\frac{\bar X^{\bar j}X^i\psi_{,i}\psi_{,\bar j}}{(1-\theta_X(\omega_{\phi_t}))^2}}(1-\theta_X(\omega_{\phi_t}))^t\omega_{\phi_t}^n.
\end{eqnarray}
On the other hand, by \eqref{0502},
\begin{eqnarray*}
Ric_{i\bar j}(\omega_{\phi_t})=g_{i\bar j}(t)-t\left[{\frac{X_{i,\bar j}}{1-\theta_X(\omega_{\phi_t})}}+{\frac{\bar X_{,i}X_{,\bar j}}{(1-\theta_X(\omega_{\phi_t}))^2}}\right].
\end{eqnarray*}
Plugging this into (\ref{0507}), one gets
\begin{eqnarray*}
\lambda\int_M\psi_{,i}\psi_{,\bar i}(1-\theta_X(\omega_{\phi_t}))^t\omega_{\phi_t}^n=\int_Mg_{i\bar j}(0)\psi_{,\bar i}\psi_{,j}(1-\theta_X(\omega_{\phi_t}))^t\omega_{\phi_t}^n.
\end{eqnarray*}
Since $\psi\not\equiv const.$, it must holds $\lambda>0$. The lemma is proved.
\end{proof}
The openness then follows from the above lemma and implicit function theorem.

\subsection{Closedness} For the closeness, it suffices to establish the a priori estimates for \eqref{0502}.

First, we prove the $C^0$-estimate.
\begin{prop}
Let $\phi_t$ be a solution of (\ref{0502}) at $t$. Then there exists a uniform constants $C$ such that $|\phi_t|\leq C$.
\end{prop}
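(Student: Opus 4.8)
The plan is to reduce \eqref{0502} to a complex Monge--Amp\`ere equation with prescribed right-hand side whose density is uniformly bounded, and then invoke Ko\l odziej's $L^\infty$-estimate. The essential structural input is that, although $\theta_X(\omega_{\phi_t})=\theta_X(\omega_0)+X(\phi_t)$ depends on the unknown, the quantity $1-\theta_X(\omega_{\phi_t})$ is pinned between the cohomological constants $c_X$ and $C_X$ for \emph{every} potential; this is exactly Mabuchi's observation that $\min_M\theta_X(\omega_\phi)$ and $\max_M\theta_X(\omega_\phi)$ are independent of the choice of $\omega_\phi\in 2\pi c_1(M)$. Under the standing assumption $c_X>0$ we therefore have $0<c_X\le 1-\theta_X(\omega_{\phi_t})\le C_X$ uniformly in $t$ and in the solution.

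First I would rewrite \eqref{0502} in the form
\begin{eqnarray*}
\omega_{\phi_t}^n=F_t\,\omega_0^n,\qquad F_t:=\frac{e^{h_0}}{(1-\theta_X(\omega_{\phi_t}))^{t}}.
\end{eqnarray*}
Since $h_0$ is a fixed smooth function and, for $t\in[0,1]$, the power $a\mapsto a^{t}$ of $a\in[c_X,C_X]$ stays between $\min\{1,c_X\}>0$ and $\max\{1,C_X\}$, the density $F_t$ satisfies $0<c\le F_t\le C$ for constants independent of $t$ and of the solution; in particular $\|F_t\|_{L^p(\omega_0^n)}\le C$ for every fixed $p>1$. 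Moreover the compatibility condition required by Ko\l odziej's theorem holds automatically: integrating the displayed equation gives $\int_M F_t\,\omega_0^n=\int_M\omega_{\phi_t}^n=V$, since $\omega_{\phi_t}$ and $\omega_0$ lie in the same class $2\pi c_1(M)$.

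With these two facts in hand, Ko\l odziej's $L^\infty$-estimate \cite{Kolodziej} applies and yields a uniform oscillation bound $\mathrm{osc}_M\,\phi_t\le C$ for all $t\in[0,1]$. To upgrade this to the two-sided bound $|\phi_t|\le C$, I would fix the additive normalization --- note that \eqref{0502} is invariant under $\phi_t\mapsto\phi_t+\mathrm{const}$, since neither $\omega_{\phi_t}$ nor $\theta_X(\omega_{\phi_t})$ changes --- by imposing, say, $\sup_M\phi_t=0$; the oscillation bound then gives $\inf_M\phi_t\ge -C$, hence $|\phi_t|\le C$.

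The only point requiring care, and the reason this estimate is far softer than the one in Lemma \ref{I bound}, is precisely the decoupling of the density from the unknown. In the main path \eqref{0302} the factor $e^{-t\phi_t}$ makes the right-hand side depend genuinely on $\phi_t$, forcing the weighted Poincar\'e inequality and Moser iteration of Lemma \ref{I bound}. Here, by contrast, the only occurrence of the unknown in $F_t$ is through $1-\theta_X(\omega_{\phi_t})$, which is a priori two-sidedly bounded by $c_X$ and $C_X$ independently of $\phi_t$; once this is observed no iteration is needed and the estimate follows directly from the prescribed-density theory. I expect this observation, rather than any analytic difficulty, to be the crux.
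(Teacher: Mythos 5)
Your core estimate coincides with the paper's: since $c_X\le 1-\theta_X(\omega_{\phi_t})\le C_X$ for every potential in the class, the equation becomes a complex Monge--Amp\`ere equation with uniformly pinched density, and the oscillation of $\phi_t$ is controlled (the paper runs the $C^0$-argument from Tian's lecture notes where you invoke Ko{\l}odziej; that difference is immaterial). The genuine gap is your last step, where you declare \eqref{0502} invariant under $\phi_t\mapsto\phi_t+\mathrm{const}$ and ``impose $\sup_M\phi_t=0$''. Read literally, \eqref{0502} admits \emph{no} solution at all for $t\in(0,1)$ when $X\neq0$: integrating it gives $\int_M(1-\theta_X(\omega_{\phi_t}))^t\omega_{\phi_t}^n=\int_Me^{h_0}\omega_0^n=V$, whereas Jensen's inequality for the concave function $a\mapsto a^t$, combined with the normalization $\int_M\theta_X(\omega_{\phi_t})\omega_{\phi_t}^n=0$ inherited from \eqref{0201+}, forces $\int_M(1-\theta_X(\omega_{\phi_t}))^t\omega_{\phi_t}^n<V$ strictly. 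The equation actually solved along the path is \eqref{0509}, carrying the normalizing factor $e^{J_t(\phi_t)}$ on the right-hand side --- this is also what the openness operator $L_t$ of Lemma \ref{0504} linearizes, through its zeroth-order term $-\int_M\psi(1-\theta_X(\omega_{\phi_t}))^t\omega_{\phi_t}^n$. Since $J_t(\phi+c)=J_t(\phi)+c\int_M(1-\theta_X(\omega_\phi))^t\omega_\phi^n$, equation \eqref{0509} is \emph{not} translation-invariant: the additive constant is pinned by the equation and cannot be normalized away.

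Two steps are therefore missing from your argument. First, the density acquires the factor $e^{J_t(\phi_t)}$, which depends globally on the unknown; it is harmless, but only after integrating \eqref{0509} to obtain $t\log c_X\le J_t(\phi_t)\le t\log C_X$ (this is \eqref{0509+}), at which point your Ko{\l}odziej step does go through and bounds the normalized potential $\hat\phi_t:=\phi_t-c_t$ with $\sup_M\hat\phi_t=-1$. Second --- the substantive point --- one must bound the constant $c_t$ itself, which the paper does via the identity \eqref{0511}: the product of $c_t$ with $\int_0^1\int_M(1-\theta_X(\omega_{s\phi_t}))^t\omega_{s\phi_t}^n\,ds$, a quantity bounded below by $\min\{1,c_X\}V>0$, equals $J_t(\phi_t)$ minus a term controlled by $\|\hat\phi_t\|_{C^0}$ and the pinched weight, whence $|c_t|\le\hat C$ and $|\phi_t|\le|\hat\phi_t|+\hat C$. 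Without this control of $c_t$, an oscillation bound for a freely renormalized potential does not yield the proposition as stated, and it is the full bound on the actual solutions of \eqref{0509} that the closedness of $\mathfrak I$ and the subsequent higher-order estimates consume. Your closing observation about the decoupling of the density from the unknown (in contrast with the factor $e^{-t\phi_t}$ in \eqref{0302}) is correct and is indeed the crux of the oscillation estimate; it just does not settle the additive constant.
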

\begin{proof}
Consider the equation
\begin{eqnarray}\label{0509}
\det(g_{i\bar j}(t))(1-\theta_X(\omega_{\phi_t}))^t=\det(g_{i\bar j}(0))e^{h_0+J_t(\phi_t)}.
\end{eqnarray}
By integration,
\begin{eqnarray*}
\int_M(1-\theta_X(\omega_{\phi_t}))^t\omega_{\phi_t}^n=e^{J_t(\phi_t)}V,
\end{eqnarray*}
we have
\begin{eqnarray*}
J_t(\phi_t)=\log\int_M(1-\theta_X(\omega_{\phi_t}))^t\omega_{\phi_t}^n-\log V.
\end{eqnarray*}
This implies
\begin{eqnarray}\label{0509+}
t\log c_X\leq J_t(\phi_t)\leq t\log C_X.
\end{eqnarray}
(\ref{0509}) can be rewritten as
\begin{eqnarray*}\label{0510}
\det(g_{i\bar j}(t))=\det(g_{i\bar j}(0))e^{\hat f_t},
\end{eqnarray*}
where $\hat f_t:=h_0+J_t(\phi_t)-t\log(1-\theta_X(\omega_{\phi_t}))$.
Let $\hat\phi_t=\phi_t-c_t$. Then $\sup_M\hat\phi_t=-1$.  Since
$$|\hat f_t|\leq \parallel h_0\parallel_{C^0}+2\max\{|\log c_X|,|\log C_X|\},$$
by the argument of $C^0$-estimate in \cite{Tian}, we see that $|\hat\phi_t|\leq C'$ for some uniform $C'>0$. On the other hand,
\begin{eqnarray}\label{0511}
&&c_t\int_0^1\int_M(1-\theta_X(\omega_{s\phi_t}))^t\omega_{s\phi_t}^n\wedge ds\notag\\
&=&I_t(\phi_t)-\int_0^1\int_M\hat\phi_t(1-\theta_X(\omega_{s\phi_t}))^t\omega_{s\phi_t}^n\wedge ds.
\end{eqnarray}
Combining (\ref{0509+}), (\ref{0511}) and the fact that
$0<c_x\leq 1-\theta_X(\omega_{s\phi_t})\leq C_X$,
one gets a uniform constant $\hat C$ such that $|c_t|\leq\hat C$, this implies
$$|\phi_t|\leq|\hat\phi_t|+\hat C\leq C'+\hat C,$$
the proposition is proved.
\end{proof}

Next we consider the $C^2$-estimate.
\begin{prop}
Let $\phi_t$ be a solution of (\ref{0502}) at $t$. Then there exist two uniform constants $C,c$ such that
\begin{eqnarray*}
n+\bigtriangleup_{\omega_0}\phi\leq Ce^{c(\phi_t-\inf_M\phi_t)}.
\end{eqnarray*}
\end{prop}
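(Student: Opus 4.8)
The plan is to establish the estimate by the Aubin--Yau second order computation run through the maximum principle, but with the \emph{weighted} Laplacian adapted to the drift field $X$; this is exactly what forces the otherwise uncontrollable third order terms to cancel. First I would rewrite (\ref{0502}) in Monge--Amp\`ere form. As in the proof of the $C^0$-estimate, (\ref{0509}) reads
\[
\det(g_{i\bar j}(t)) = \det(g_{i\bar j}(0))\,e^{\hat f_t}, \qquad \hat f_t = h_0 + J_t(\phi_t) - t\log(1-\theta_X(\omega_{\phi_t})),
\]
and by (\ref{0509+}), the bounds $c_X\le 1-\theta_X(\omega_{\phi_t})\le C_X$ and the $C^0$-estimate just proved, $\hat f_t$ is uniformly bounded. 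I would then apply the maximum principle to
\[
u := \log(n+\bigtriangleup_{\omega_0}\phi_t) - c\,\phi_t,
\]
for a large constant $c$ to be fixed, using the operator
\[
\mathcal L_t := \bigtriangleup_{\omega_{\phi_t}} - \frac{tX}{1-\theta_X(\omega_{\phi_t})},
\]
which carries precisely the drift of (\ref{0502}). Since $u$ is real and attains its maximum at an interior point $p$, one has $\partial u(p)=\bar\partial u(p)=0$ (so $X(u)(p)=0$) and $\bigtriangleup_{\omega_{\phi_t}}u(p)\le 0$, whence $\mathcal L_t u(p)\le 0$.

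The heart of the matter, and the step I expect to be the main obstacle, is the lower bound for $\mathcal L_t\log(n+\bigtriangleup_{\omega_0}\phi_t)$. The standard Aubin--Yau inequality gives
\[
\bigtriangleup_{\omega_{\phi_t}}\log(n+\bigtriangleup_{\omega_0}\phi_t) \ge \frac{\bigtriangleup_{\omega_0}\hat f_t}{n+\bigtriangleup_{\omega_0}\phi_t} - B\,\mathrm{tr}_{\omega_{\phi_t}}\omega_0,
\]
with $-B$ a lower bound for the holomorphic bisectional curvature of $\omega_0$. The difficulty is $\bigtriangleup_{\omega_0}\hat f_t$: since $\theta_X(\omega_{\phi_t})=\theta_X(\omega_0)+X(\phi_t)$ involves the first derivatives of $\phi_t$, its Laplacian contains a third order term in $\phi_t$. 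Here I would use that $X$ is holomorphic, so that $\bigtriangleup_{\omega_0}(X(\phi_t)) = X(\bigtriangleup_{\omega_0}\phi_t) + O(n+\bigtriangleup_{\omega_0}\phi_t)$, giving
\[
\bigtriangleup_{\omega_0}\hat f_t = \frac{t\,X(\bigtriangleup_{\omega_0}\phi_t)}{1-\theta_X(\omega_{\phi_t})} + Q + R, \qquad Q\ge 0,\ \ |R|\le C(n+\bigtriangleup_{\omega_0}\phi_t),
\]
where $Q$ is the nonnegative gradient term coming from differentiating $\log(1-\theta_X)$. After dividing by $n+\bigtriangleup_{\omega_0}\phi_t$, the leading term becomes $\frac{t}{1-\theta_X(\omega_{\phi_t})}\,X\!\big(\log(n+\bigtriangleup_{\omega_0}\phi_t)\big)$, which is cancelled precisely by the drift part $-\tfrac{tX}{1-\theta_X}$ of $\mathcal L_t$. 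Discarding $Q$ and bounding $R$, I obtain
\[
\mathcal L_t\log(n+\bigtriangleup_{\omega_0}\phi_t) \ge -B\,\mathrm{tr}_{\omega_{\phi_t}}\omega_0 - C_1.
\]

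To finish, note $\bigtriangleup_{\omega_{\phi_t}}\phi_t = n - \mathrm{tr}_{\omega_{\phi_t}}\omega_0$ and $X(\phi_t)=\theta_X(\omega_{\phi_t})-\theta_X(\omega_0)$ is bounded, so $\mathcal L_t\phi_t \le n + C_2 - \mathrm{tr}_{\omega_{\phi_t}}\omega_0$. Hence
\[
\mathcal L_t u \ge (c-B)\,\mathrm{tr}_{\omega_{\phi_t}}\omega_0 - C_3.
\]
Taking $c=B+1$ and evaluating at $p$, the relation $\mathcal L_t u(p)\le 0$ forces $\mathrm{tr}_{\omega_{\phi_t}}\omega_0(p)\le C_3$. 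Combining with the elementary inequality $n+\bigtriangleup_{\omega_0}\phi_t \le \frac{1}{(n-1)!}(\mathrm{tr}_{\omega_{\phi_t}}\omega_0)^{n-1}e^{\hat f_t}$ and the boundedness of $\hat f_t$ gives $(n+\bigtriangleup_{\omega_0}\phi_t)(p)\le C_4$. Since $u(x)\le u(p)$ for all $x$, the bound
\[
\log(n+\bigtriangleup_{\omega_0}\phi_t)(x) \le \log C_4 - c\inf_M\phi_t + c\,\phi_t(x)
\]
exponentiates to $n+\bigtriangleup_{\omega_0}\phi_t \le C\,e^{c(\phi_t-\inf_M\phi_t)}$, as desired. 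Apart from the third order cancellation produced by the weighted operator $\mathcal L_t$, every step is the routine Aubin--Yau argument.
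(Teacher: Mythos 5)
Your proof is correct, and at bottom it is the same Aubin--Yau--Zhu maximum-principle estimate that the paper runs, starting from the same Monge--Amp\`ere form (\ref{0509}); the genuine difference lies in how the one delicate term is handled, and your handling is the more transparent of the two. The paper applies the plain Laplacian $\bigtriangleup_{\omega_{\phi_t}}$ to Zhu's quantity $(n+\bigtriangleup_{\omega_0}\phi_t)e^{-c\phi_t}$ and cites the computations of \cite{Zhu} to reach (\ref{0508}); in that route the third-order term $X(\bigtriangleup_{\omega_0}\phi_t)$ produced by $\bigtriangleup_{\omega_0}\log(1-\theta_X(\omega_{\phi_t}))$ --- the very term you single out as the main obstacle --- is disposed of only at the maximum point, where vanishing of the gradient of the test quantity converts $X(\bigtriangleup_{\omega_0}\phi_t)$ into $c\,(n+\bigtriangleup_{\omega_0}\phi_t)X(\phi_t)$, with $X(\phi_t)=\theta_X(\omega_{\phi_t})-\theta_X(\omega_0)$ uniformly bounded. (Indeed, the paper's first displayed inequality, read literally as a pointwise bound, suppresses exactly this third-order term; it is justified only inside that max-point computation.) Your device --- building the drift $-tX/(1-\theta_X(\omega_{\phi_t}))$ into the operator $\mathcal L_t$ so that it cancels $\frac{t}{1-\theta_X(\omega_{\phi_t})}X\bigl(\log(n+\bigtriangleup_{\omega_0}\phi_t)\bigr)$ identically, and then invoking $\partial u(p)=0$ only to conclude $\mathcal L_t u(p)\le 0$ --- effects the same cancellation but pointwise, cleanly separating the algebra from the maximum principle. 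The remaining differences (logarithmic test function together with the eigenvalue inequality $\mathrm{tr}_{\omega_0}\omega_{\phi_t}\le\frac{1}{(n-1)!}(\mathrm{tr}_{\omega_{\phi_t}}\omega_0)^{n-1}e^{\hat f_t}$, versus the exponent $\frac{n}{n-1}$ appearing in (\ref{0508})) are cosmetic. Your supporting claims are all sound: $\hat f_t$ is uniformly bounded by (\ref{0509+}) and $c_X\le 1-\theta_X(\omega_{\phi_t})\le C_X$ (in fact the $C^0$-estimate is not even needed for this), and $\bigtriangleup_{\omega_0}(X(\phi_t))=X(\bigtriangleup_{\omega_0}\phi_t)+O(n+\bigtriangleup_{\omega_0}\phi_t)$ holds because $X$ is holomorphic and the curvature commutator terms cancel by the K\"ahler symmetries of $R_{i\bar kj\bar l}$.
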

\begin{proof}
By computation,
\begin{eqnarray*}
\bigtriangleup_{\omega_0}\log(1-\theta_X(\omega_\phi))
%&=&\left(\frac{-\theta_X(\omega_\phi)_{,i}}{1-\theta_X(\omega_\phi)}\right)_{,\bar i}\notag\\
=-{\frac{\bigtriangleup_{\omega_0}\theta_X(\omega_\phi)}{1-\theta_X(\omega_\phi)}}-{\frac{|\partial\theta_X(\omega_\phi)|^2}{1-\theta_X(\omega_\phi)^2}}
%\leq-{\frac{\bigtriangleup_{\omega_0}(\theta_X(\omega_0)+X(\phi))}{1-\theta_X(\omega_\phi)}}\notag\\
\leq C_1(n+\bigtriangleup_{\omega_0}\phi)+C_2
\end{eqnarray*}
for some constants $C_1,C_2$ independent of $\phi$.
Then following the computations of \cite{Zhu},
\begin{eqnarray}\label{0508}
\bigtriangleup_{\omega_{\phi}}((n+\bigtriangleup_{\omega_0}\phi)e^{-c\phi})&=&e^{-c\phi}\left(\bigtriangleup_{\omega_0}(h_0-t\log(1-\theta_X(\omega_\phi)))-n^2\inf_{l\not=k}R_{i\bar il\bar l}\right)\notag\\
&&+(c+\inf_{l\neq i}R_{i\bar il\bar l})(n+\bigtriangleup_{\omega_0}\phi)e^{-c\phi}\left(\sum_i{\frac1{1+\phi_{,i\bar i}}}\right)\notag\\
&&-cn(n+\bigtriangleup_{\omega_0}\phi)e^{-c\phi}\notag\\
&\geq&-e^{-\phi}(C_3+cC_4(n+\bigtriangleup_{\omega_0}\phi))+C_5e^{-c\phi}(n+\bigtriangleup_{\omega_0}\phi)^{\frac n{n-1}}
\end{eqnarray}
for sufficiently large constant $c$ and some uniform constants $C_3\sim C_5$. The proposition then follows from (\ref{0508}) in a standard way.
\end{proof}
The higher order estimates then follow from nonlinear elliptic equation theory and we omit the details.

%%%%%%%%%%%%%%%%%%%%%%%%%%%%%%%%%%%%%%%%%%%%%%%%%%%%%%%%%%%%%%%%%%%%%%%%%%%%%%%%
%%%%%%%%%%%%%%%%%%%%%%%%%%%%%%%%%%%%%%%%%%%%%%%%%%%%%%%%%%%%%%%%%%%%%%%%%%%%%%%%
%%%%%%%%%%%%%%%%%%%%%%%%%%%%%%%%%%%%%%%%%%%%%%%%%%%%%%%%%%%%%%%%%%%%%%%%%%%%%%%%

\end{document}